\newtheorem{thm}{Theorem}
\newtheorem{lem}[thm]{Lemma}
\newtheorem{prop}[thm]{Proposition}
\newtheorem{coro}[thm]{Corollary}
\newtheorem*{thm*}{Theorem}
\theoremstyle{definition}
\newtheorem{defn}[thm]{Definition}
\theoremstyle{remark}
\newtheorem{rmk}[thm]{Remark}
\DeclareMathOperator{\reg}{Reg}
\DeclareMathOperator{\supp}{supp}
\DeclareMathOperator{\diam}{diam}
\DeclareMathOperator{\dist}{dist}
\DeclareMathOperator{\divergence}{div}
\DeclareMathOperator{\crit}{crit}
\DeclareMathOperator{\lip}{Lip}
\DeclareMathOperator{\acc}{acc}
\DeclareMathOperator{\essacc}{ess\,acc}
\newcommand{\R}{\mathbb{R}}
\newcommand{\N}{\mathbb{N}}
\newcommand{\lebesgue}{\mathsf{Leb}}
\newcommand{\lipschitz}{\mathsf{Lip}}
\renewcommand{\geq}{\geqslant}
\renewcommand{\leq}{\leqslant}
\newcommand{\meas}[2]{\mu_{#1|_{#2}}}
\title{Long term dynamics of the subgradient method for Lipschitz path differentiable functions\\
}
\author{
 J\'er\^ome Bolte, Edouard Pauwels, and
 Rodolfo R\'ios-Zertuche
}
\begin{document}
\maketitle
\begin{abstract}
We consider the long-term dynamics of the vanishing stepsize subgradient method in the case when the objective function is neither smooth nor convex. We assume that this function is locally Lipschitz and path differentiable, i.e., admits a chain rule. Our study departs from other works in the sense that we focus on the behavoir of the oscillations, and to do this we use closed measures. We recover known convergence results, establish new ones, and show a local principle of oscillation compensation for the velocities. Roughly speaking, the time average of gradients {\em around} one limit point vanishes. 
This allows us to further analyze  the structure of oscillations, and establish their perpendicularity to the general drift.
\end{abstract}
\newpage
\tableofcontents
\newpage
\section{Introduction}
\label{sec:intro}

 The predominance of huge scale complex nonsmooth nonconvex problems in the development of certain artificial intelligence methods, has brought back rudimentary, numerically cheap, robust methods, such as subgradient algorithms, to the forefront of contemporary numerics, see e.g., \cite{higham2019deep,duchi2011adaptive,kingma2014adam,barakat2019convergence,bianchi2020convergence}. We investigate here some of the properties of the archetypical algorithm within this class, namely, the vanishing stepsize  subgradient method of Shor. Given $f:\R^n\to\R$ locally Lipschitz, it reads
 $$x_{i+1}\in x_i-\varepsilon_i\partial^c f(x_i), \:\: x_0\in \R^n,$$
 where $\partial^c f$ is the Clarke subgradient, $\varepsilon_i \to 0$, and 
 $\sum_{i=0}^\infty\varepsilon_i=\infty$. 
  This dynamics, illustrated in Figure \ref{fig:exSubgradDescentBanana}, has its roots in Cauchy's gradient method and seems to originate in Shor's thesis \cite{shorthesis}. The idea is natural at first sight: one accumulates small subgradient steps  to make good progress on average while  hoping that oscillations will be tempered by the vanishing steps.  For the convex case, the theory was developed by Ermol'ev \cite{ermolev66}, Poljak \cite{poljak67},  Ermol'ev--Shor \cite{ermol67}. It is a quite mature theory, see e.g. \cite{nemirovskii1979complexity,nesterov2013introductory}, which still has a considerable success through the famous mirror descent of Nemirovskii--Yudin \cite{nemirovskii1979complexity,beck2003mirror} and its endless variants. In the nonconvex case, developments of more sophisticated methods were made, see e.g. \cite{kiwiel2001,hare2010redistributed,noll2013bundle}, yet little  was known for the raw method until recently.

The work of  Davis et al. \cite{Davis2019}, see also \cite{bianchi2019constant}, revolving around the fundamental paper of Bena\"im--Hofbauer--Sorin \cite{BHS}, brought the first breakthroughs. It relies on a classical idea of Euler: small-step discrete dynamics resemble their continuous counterparts. As established by Ljung \cite{ljung1977analysis}, this observation can be made rigorous for large times in the presence of good Lyapunov functions. Bena\"im--Hofbauer--Sorin \cite{BHS}  showed further that the transfer of asymptotic properties from continuous differential inclusions to small-step discrete methods is valid under rather weak compactness and dissipativity assumptions. This general result, combined with features specific to the subgradient case, allowed to establish several optimization results such as the convergence to the set of critical points, the convergence in value, convergence in the long run in the presence of noise \cite{Davis2019,adil,boltepauwels,bianchi2020convergence}.

Usual properties expected from an algorithm are diverse: convergence of iterates, convergence in values, rates, quality of optimality, complexity, or prevalence of minimizers. Although in our setting some aspects seem hopeless without strong assumptions,  most of them remain largely unexplored.   Numerical successes suggest however that the apparently  erratic process of subgradient dynamics has  appealing stability properties beyond the already delicate subsequential convergence to critical points. %

In order to address some of these issues, this paper avoids the use of the theory of \cite{BHS} and focuses on the delicate question of oscillations, which is illustrated on Figures \ref{fig:exSubgradDescentBanana} and \ref{fig:convex}.
\begin{figure}[!ht]
    \centering
    \includegraphics[width=.6\textwidth]%
    {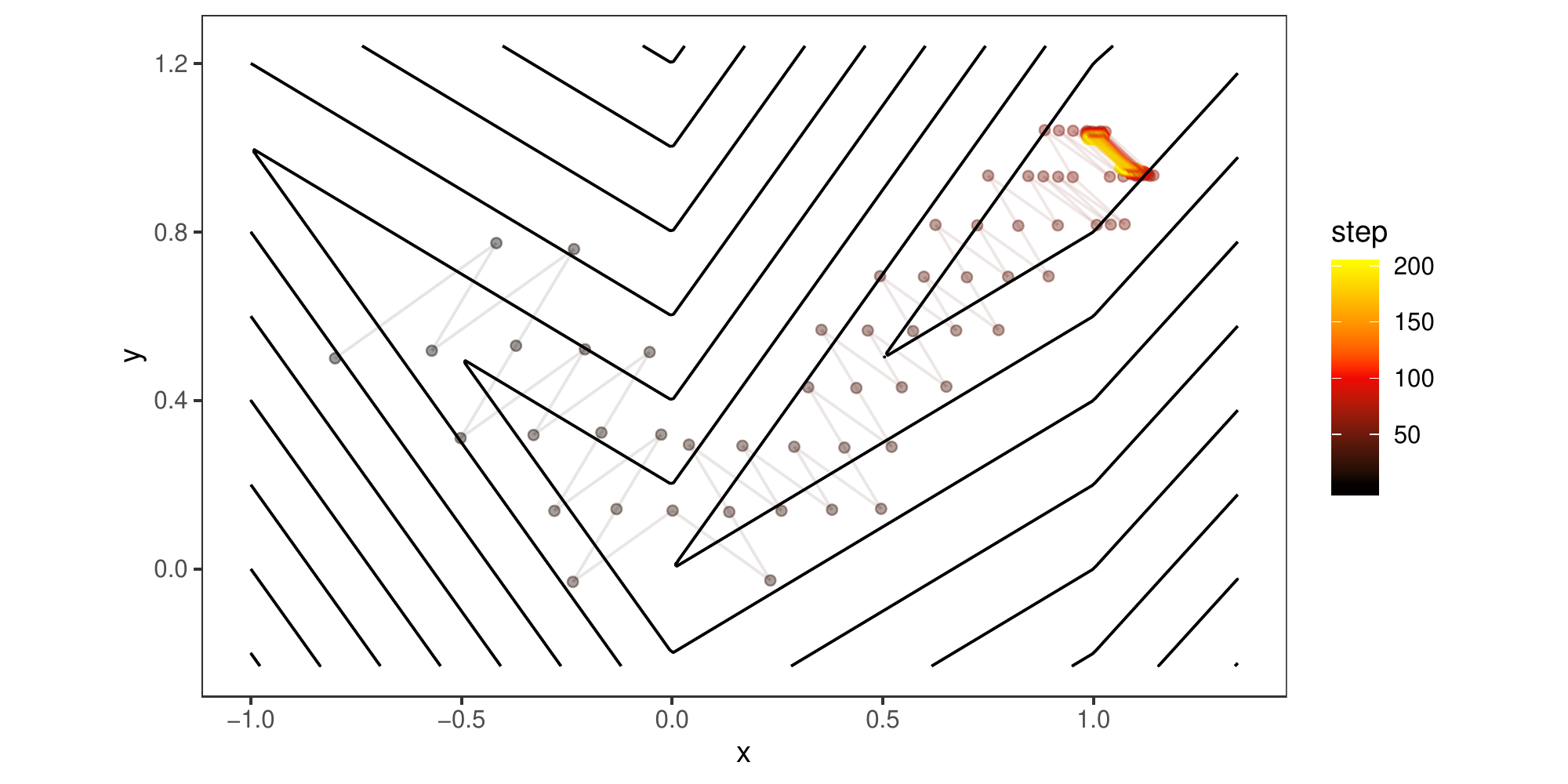}
    \caption{Contour plot of a Lipschitz function with a subgradient sequence. The color reflects the iteration count. The sequence converges to the unique global minimum, but is constantly oscillating.}
    \label{fig:exSubgradDescentBanana}
\end{figure}

\begin{figure}[!ht]
\centering
 \includegraphics[width=\linewidth]{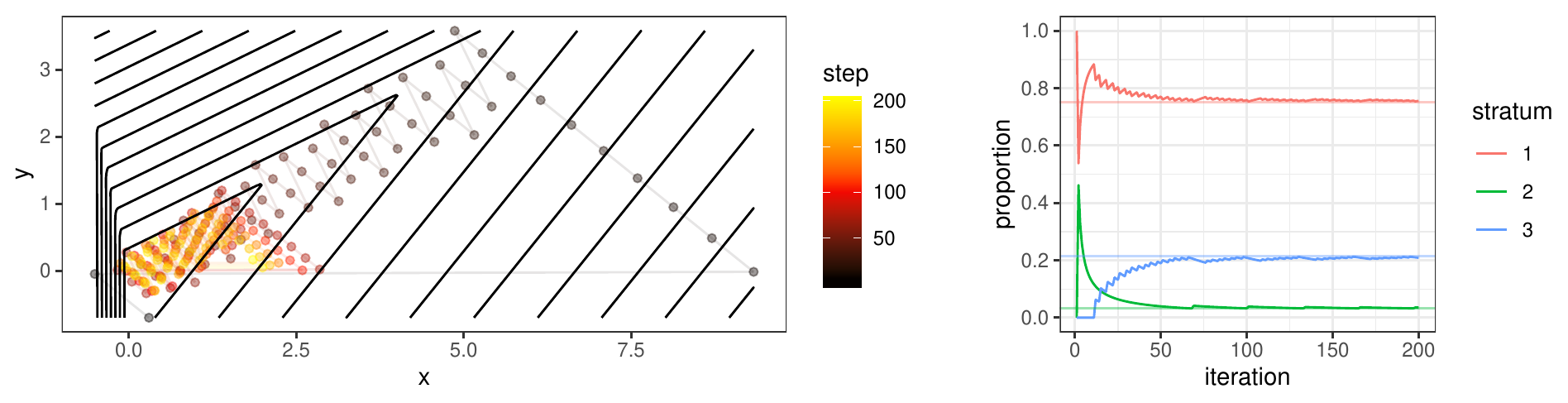}
\caption{On the left,  the contour plot of a convex polyhedral function with three strata, where the gradient is constant.
A subgradient sequence starts at $(0.3, -0.7)$ and converges to the origin with  an apparent erratic behavior. On the right, we discover that the behavior is not completely erratic. The oscillation compensation phenomenon contributes some structure: 
the proportions $\lambda_i$ of time spent in each region where the function has constant gradient $g_i$, $i=1,2,3$, converge so that we have precisely $\lambda_1g_1+\lambda_2g_2+\lambda_3g_3=0$.
}
\label{fig:convex}
\end{figure}

In general, as long as the sequence $\{x_i\}_i$ remains bounded, we always have
\begin{equation}\label{eq:globalcomp}
 \frac{x_N-x_0}{\sum_{i=0}^N\varepsilon_i}=\frac{\sum_{i=0}^N\varepsilon_iv_i}{\sum_{i=0}^N\varepsilon_i}\to 0, \mbox{ where $v_i\in \partial^cf(x_i)$}.
\end{equation}
This fact, that could be called ``global oscillation compensation,'' does not prevent the trajectory to oscillate fast around a limit cycle, as illustrated in \cite{daniilidisdrusvyatskiy}, and is therefore unsatisfying from the stabilization perspective of minimization. The phenomenon \eqref{eq:globalcomp} remains true even when $\{x_i\}_i$ is not a gradient sequence, as in the case of discrete game theoretical dynamical systems \cite{BHS}.

In this work, we adapt the theory of closed measures, which was originally developed in the calculus of variations (see for example \cite{bangert1999minimal,patrick}), to the study of discrete dynamics. Using it, we establish several local oscillation compensation results for path differentiable functions. Morally, our results in this direction say that for limit points $x$ we have
\begin{equation}\label{eq:intuitivecompensation}
\textrm{``}\,\lim_{\substack{\delta\searrow0\\N\to+\infty}}\frac{\displaystyle\sum_{\substack{0\leq i\leq N\\ \|x-x_i\|\leq \delta}}\varepsilon_iv_i}{\displaystyle\sum_{\substack{0\leq i\leq N\\ \|x-x_i\|\leq \delta}}\varepsilon_i}=0\,\textrm{''}
\end{equation}
See Theorems \ref{thm:main} and \ref{thm:addendum} for precise statements, and a discussion in Section \ref{sec:interpretations}.%

While this does not imply the convergence of $\{x_i\}_i$, it does mean that the drift emanating from the average velocity of the sequence  vanishes as time elapses. This is made  more explicit in the parts of those theorems that show that, given two limit points $x$ and $y$ of the sequence $\{x_i\}_i$, the time it takes for the sequence to flow from a small ball around $x$ to a small ball around $y$ must eventually grow infinitely long, so that the overall speed of the sequence as it traverses the accumulation set becomes extremely slow.

With these types of results, we evidence new phenomena:
\begin{itemize}
\item[---] while the sequence may not converge, it will spend most of the time oscillating near the critical set of the objective function, and it appears that there are persistent accumulation points whose importance is predominant;
\item[---] under weak Sard assumptions, we recover the convergence results of \cite{Davis2019} and improve them by  oscillation compensations results,
\item[---] oscillation structures itself orthogonally to the limit set, so that the incremental drift along this set is negligible with respect to the time increment $\varepsilon_i$.
\end{itemize}

These results are made possible by the use of closed measures. These measures capture the accumulation behavior of the sequence $\{x_i\}_i$ along with the ``velocities'' $\{v_i\}_i$. The simple idea of not throwing away the information of the vectors $v_i$ allows one to recover a lot of structure in the limit, that can be interpreted as a portrait of the long-term behavior of the sequence. The theory that we develop in Section \ref{sec:closedmeas} should apply to the analysis of the more general case of small-step algorithms. Along the way, for example, we are able to establish a new connection between the discrete and continuous gradient flows (Corollary \ref{cor:discretecontinuous}) that complements the point of view of \cite{BHS}.

\paragraph{Notations and organization of the paper.} Let $n$ be a positive integer, and $\R^n$ denote $n$-dimensional Euclidean space. The space $\R^n\times\R^n$ of couples $(x,v)$ is seen as the phase space consisting of positions $x\in \R^n$ and velocities $v\in \R^n$. For two vectors $u=(u_1,\dots,u_n)$ and $v=(v_1,\dots,v_n)$, we let $u\cdot v=\sum_{i=1}^nu_iv_i$. The norm $\|v\|=\sqrt{v\cdot v}$ induces the distance $\dist(x,y)=\|x-y\|$, and similarly on $\R^n\times\R^n$. The Euclidean gradient of $f$ is denoted by $\nabla f(x)$. The set $\N$ contains all the nonnegative integers.%

In Section \ref{sec:algorithmframework} we give the definitions necessary to state our results, which we do in Section \ref{sec:results}. 
The proofs of our results will be given in Section \ref{sec:proofs}. Before we broach those arguments, we need to develop some preliminaries regarding our main tool, the so-called closed measures; we do this in Section \ref{sec:prelims}. %

\section{Algorithm and framework}
\label{sec:algorithmframework}

\subsection{The vanishing step subgradient method}
\label{sec:smallstepsubgrad}
Consider a locally Lipschitz functions $f:\R^n\to\R$, denote by $\reg f$ the set of its differentiability points which is dense by Rademacher's theorem (see for example \cite[Theorem 3.2]{evans2015measure}). The  Clarke subdifferential of $f$ is defined by 
\begin{equation*}
\partial^cf(x)=\mbox{conv}\,\big\{v\in \R^n:\; \textrm{there is a sequence $ \{y_k\}_k\subset \reg f$ with $y_k\to x$ and $\nabla f(y_k)\to v$}  \big\}    
\end{equation*}
where $\mbox{conv}\,S$ denotes the closed convex envelope of a set $S\subset\R^n$; see \cite{clarke}.

A point $x$ such that $0\in\partial^cf(x)$, is called {\em critical}. The \emph{critical set}
is \[\crit f=\{x\in\R^n:0\in\partial^cf(x)\}. \]
It contains local minima and maxima.

The algorithm of interest in this work is:

\begin{defn}[Small step subgradient method] \label{def:subgradient}
Let $f\colon\R^n\to\R$ be locally Lipschitz and $\{\varepsilon_i\}_{i \in \N}$ be a sequence of positive step sizes such that %
\begin{equation}\label{eq:divergestep}
\sum_{i=0}^\infty\varepsilon_i=+\infty\quad\textrm{and}\quad\varepsilon_i\searrow0.
\end{equation}
Given $x_0 \in \R^n$, consider the recursion, for $i\geq 0$,
 \[x_{i+1}=x_i-\varepsilon_iv_i, \qquad v_i \in \partial^cf(x_i).\]
 Here, $v_i$ is chosen freely among $\partial^cf(x_i)$. The sequence $\{x_i\}_{i \in \N}$ is called a \emph{subgradient sequence}.
\end{defn}

In what follows the sequence $\varepsilon_i$ is interpreted as a sequence of time increments, and it naturally defines a time counter through the formula:
\[t_i=\sum_{j=0}^{i}\varepsilon_j\]
so that $t_i\to \infty$ as $i\to \infty$. Given a sequence $\{x_i\}_i$ and a  subset $U\subseteq \R^n$, we set 
\[t_i(U)=\sum_{x_j\in U,\, j\leq i} \varepsilon_j,\]
which corresponds to the time spent by the sequence in $U$.

Recall that the \emph{accumulation set $\acc\{x_i\}_i$} of the sequence $\{x_i\}_i$ is the set of points $x\in\R^n$ such that, for every neighborhood $U$ of $x$, the intersection $U\cap\{x_i\}_i$ is an infinite set. Its elements are known as \emph{limit points}. 

If the sequence $\{x_i\}_i$ is bounded and comes from the subgradient method as in Definition \ref{def:subgradient}, then $\|x_i-x_{i+1}\|\to 0$ because $\varepsilon_i\to 0$ and $\partial^c f$ is locally bounded by local Lipschitz continuity of $f$, so $\acc\{x_i\}_i$ is compact and connected, see e.g., \cite{bolte2014proximal}.

\smallskip

Accumulation points are the manifestation of recurrent behaviors of the sequence but the frequency of the recurrence is ignored. In the presence of a time counter, here $\{t_i\}_i$, this persistence phenomenon  may be measured through presence duration  in the neighborhood of a recurrent point. This idea is formalized in the following definition:

\begin{defn}[Essential accumulation set]\label{def:essacc}
 Given a step size sequence $\{\varepsilon_i\}_i \subset \R_{\geq 0}$ and a subgradient sequence $\{x_i\}_i\subset\R^n$ as in Definition \ref{def:subgradient}, the \emph{essential accumulation set $\essacc\{x_i\}_i$} is the set of points $x\in\R^n$ such that, for every neighborhood $U\subseteq \R^n$ of $x$,
 \[\limsup_{N\to+\infty}\frac{\displaystyle\sum_{\substack{1\leq i\leq N\\ x_i\in U}}\varepsilon_i}{\displaystyle\sum_{1\leq i\leq N}\varepsilon_i}>0,
\quad\mbox{ that is, }\quad\limsup_{N\to+\infty}\frac{t_N(U)}{t_N}>0.\]

 Analogously, considering the increments $\{v_i\}_i \subset \R^n$, we say that the point $(x,w)$ is in the \emph{essential accumulation set $\essacc\{(x_i,v_i)\}_i$} if for every neighborhood $U\subset\R^n\times\R^n$ of $(x,w)$ satisfies
  \[\limsup_{N\to+\infty}\frac{\displaystyle\sum_{\substack{1\leq i\leq N\\ (x_i,v_i)\in U}}\varepsilon_i}{\displaystyle\sum_{1\leq i\leq N}\varepsilon_i}>0.\]
\end{defn}

As explained previously, the set $\essacc\{x_i\}_i$ encodes significantly recurrent behavior; it ignores sporadic escapades of the sequence $\{x_i\}_i$. Essential accumulation points are accumulation points but the converse is not true. If the sequence $\{x_i\}_i$ is bounded, $\essacc\{x_i\}_i$ is nonempty and compact, but not necessarily connected. %

\subsection{Regularity assumptions on the objective function}

\paragraph{Lipchitz continuity and pathologies.}   Recall that, given a locally Lipschitz function $f\colon\R^n\to\R$,  a {\em subgradient curve}  is an absolutely continuous curve satisfying,
\[\gamma'(t)\in-\partial^cf(\gamma(t)), \, \mbox{a.e. on $(0,+\infty)$ and }\gamma(0)=x_0.\]
By general results these curves exist, see e.g., \cite{BHS}  and references therein. In our context they embody the ideal behavior  we could hope from subgradient sequences.

First let us recall that  pathological Lipschitz functions are generic in the Baire sense, as established in \cite{Wang1999FineAP,borwein2001generalized}. In particular, generic $1$-Lipschitz functions $f:\R\to\R$ satisfy $\partial f\equiv [-1,1]$ everywhere on $\R$. This means that any absolutely curve $\gamma:\R\to\R$ with $\|\gamma'\|\leq 1$ is a subgradient curve of these functions, regardless of their specifics. Note that this implies that a curve may constantly remain away from the critical set.

The examples by Danillidis--Drusvyatskiy \cite{daniilidisdrusvyatskiy} make this erratic behaviour even more concrete. For instance, they provide a Lipschitz function $f\colon \R^2\to\R$ and a bounded subgradient curve $\gamma$ having the ``absurd" roller coaster property
\[(f\circ \gamma)(t)=\sin t,\quad t\in\R.\] 
Although not directly matching our framework, these examples show that we cannot hope for  satisfying convergence results under the  spineless  general assumption of Lipschitz continuity.

\paragraph{Path differentiability.}  We are thus led to consider functions avoiding pathologies. We choose to pertain to the  {\em fonctions saines}\footnote{Literally, ``healthy functions'' (as opposed to pathological) in French.} %
of Valadier \cite{valadier} (1989), rediscovered in several works, see e.g.  \cite{borwein,Davis2019,boltepauwels}. We   use the terminology of \cite{boltepauwels}.

\begin{defn}[Path differentiable functions]\label{def:pathdiff}
 A locally Lipschitz function $f\colon\R^n\to\R$ is \emph{path differentiable} if, for each Lipschitz curve $\gamma\colon\R\to\R^n$, for almost every $t\in\R$, the composition $f\circ\gamma$ is differentiable at $t$ and the derivative is given by
 \[(f\circ\gamma)'(t)=v\cdot \gamma'(t)\]
 for all $v\in\partial^cf(\gamma(t))$.
\end{defn}

In other words, all vectors in $\partial^cf(\gamma(t))$ share the same projection onto the subspace generated by $\gamma'(t)$. 
Note that the definition proposed in \cite{boltepauwels} is not limited to chain rules involving the Clarke subgradient, but it turns out to be equivalent to the a definition very much like the one we give here, with Lipschitz curves replaced by absolutely-continuous curves,
the equivalence being furnished by \cite[Corollary 2]{boltepauwels}. 
The current definition is slightly more  general than the original one \cite{boltepauwels}, that is, our class of functions contains the one discussed in \cite{boltepauwels}, because we require a condition only for Lipschitz curves, which are all absolutely continuous. %

The class of path differentiable functions is very large and includes many cases of interest, such as functions that are  semi-algebraic, tame (definable in an o-minimal structure), or Whitney stratifiable \cite{Davis2019} (in particular, models and loss functions used in machine learning, such as, for example, those occurring in neural network training with all the activation functions that have been considered in the literature), as well as functions that are %
convex, concave, %
see e.g.,  \cite{boltepauwels,scholtes2012introduction}. 

\paragraph{Whitney stratifiable functions.} Due to their ubiquity we detail here the properties of Whitney stratifiability and illustrate their utility. They  were first used in  \cite{Bolte2007} in the variational analysis context in order to establish Sard's theorem and Kurdyka-\L ojasiewicz inequality for definable functions, two properties which appears to be essential in the study of many subgradient related problems, see e.g.,  \cite{attouchboltesvaiter,bolte2014proximal}.

\begin{defn}[Whitney stratification]\label{def:stratification}
 Let $X$ be a nonempty subset of $\R^m$ and $p>0$. A \emph{$C^p$ stratification} $\mathcal X=\{X_i\}_{i\in I}$ of $X$ is a locally finite partition of $X=\bigsqcup_iX_i$ into connected submanifolds $X_i$ of $\R^m$ of class $C^p$ such that for each $i\neq j$
 \[\overline{X_i}\cap X_j\neq \emptyset \Longrightarrow X_j\subset \overline{X_i}\setminus X_i.\]
 A $C^p$ stratification $\mathcal X$ of $X$ \emph{satisfies Whitney's condition (a)} if, for each $x\in \overline{X_i}\cap X_j$, $i\neq j$, and for each sequence $\{x_k\}_k\subset X_i$ with $x_k\to x$ as $k\to+\infty$, and such that the sequence of tangent spaces $\{T_{x_k}X_i\}_k$ converges (in the usual metric topology of the Grassmanian) to a subspace $V\subset T_x\R^m$, we have that $T_xX_j\subset V$. A $C^p$ stratification is \emph{Whitney} if it satisfies Whitney's condition (a).
\end{defn}

\begin{defn}[Whitney stratifiable function]\label{def:stratifiablefunc}
 With the same notations as above,
 a function $f\colon \R^n\to\R^k$ is \emph{Whitney $C^p$-stratifiable} if there exists a Whitney $C^p$ stratification of its graph as a subset of $\R^{n+k}$. 
\end{defn}

Examples of Whitney stratifiable functions  are semialgebraic or tame functions, but much less structured functions are covered. This class covers most known finite dimensional optimization problems as for instance those met in the training of neural networks. Let us mention here that the subclass of tame functions have led to many results through the nonsmooth Kurdyka--{\L}ojasiewicz inequality, see e.g.,  \cite{attouchboltesvaiter}, while mere Whitney stratifiability combined with the Ljung-like theory developed in \cite{BHS} has also provided several interesting openings \cite{Davis2019,boltepauwels,bianchi2020convergence}.

\section{Main results: accumulation, convergence, oscillation compensation}
\label{sec:results}
We now present our main, results which rely on three types of increasingly demanding assumptions:
\begin{itemize}
\item[---] path differentiability (Section \ref{sec:pathdiff}),
\item[---] path differentiable functions with a weak Sard property (Section \ref{sec:pathdiffsard}),
\item[---] Whitney stratifiable functions (Section \ref{sec:whitney}).
\end{itemize}
Section \ref{sec:whitney} also contains a general result pertaining the structure of the oscillations.

The significance of the results is discussed in Section \ref{sec:interpretations}. The proofs are presented in Section \ref{sec:proofs}.

\subsection{Asymptotic dynamics for path differentiable functions}
\label{sec:pathdiff}

\begin{thm}[Asymptotic dynamics for path differentiable functions]\label{thm:main}
 Assume that $f\colon\R^n\to\R$ is locally Lipschitz path differentiable, and that $\{x_i\}_i$ is a sequence generated by the subgradient method (Definition \ref{def:subgradient}) that remains bounded. Then we have:
 \begin{enumerate}[label=\roman*.,ref=(\roman*)]
  \item\label{ma:longloops} (Lengthy separations) 
   Let $x$ and $y$ be two distinct points in $\essacc\{x_i\}_i$ such that $f(x)\leq f(y)$. Let $\{x_{i_k}\}_k$ be a subsequence such that $x_{i_k}\to x$ as $k\to+\infty$, and for each $k$ choose $i'_k>i_k$ such that $x_{i'_k}\to y$. Consider
   \[\bar T_k=\sum_{p=i_k}^{i'_k}\varepsilon_p.\]
   Then $T_k\to+\infty$. 

  \item\label{ma:oscillationcompcont} (Oscillation compensation) Let $\psi\colon\R^n\to[0,1]$ be a continuous function. Then for every subsequence $\{N_i\}_i\subset\N$ such that \[\liminf_{j\to+\infty}\frac{\displaystyle \sum_{i=0}^{N_j}\varepsilon_i\psi(x_i)}{\displaystyle\sum_{i=0}^{N_j}\varepsilon_i}>0,\]
  we have
  \[\lim_{j\to+\infty}\frac{\displaystyle\sum_{i=0}^{N_j}\varepsilon_iv_i\psi(x_i)}{\displaystyle\sum_{i=0}^{N_j}\varepsilon_i\psi(x_i)}=0.\]

  \item\label{ma:criticality} (Criticality) For all $x\in\essacc\{x_i\}_i$, $0\in\partial^cf(x)$. In other words, $\essacc\{x_i\}_{i\in\N}\subseteq\crit f$.
 \end{enumerate}
\end{thm}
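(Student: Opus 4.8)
The plan is to package the subgradient sequence together with its velocities into a measure on the phase space $\R^n\times\R^n$ and pass to a limit. Concretely, for each $N$ define the probability measure
\[
\mu_N=\frac{1}{t_N}\sum_{i=0}^{N}\varepsilon_i\,\delta_{(x_i,v_i)}
\]
on $\R^n\times\R^n$. Since $\{x_i\}_i$ is bounded and $\partial^cf$ is locally bounded, the $(x_i,v_i)$ all lie in a fixed compact set, so $\{\mu_N\}_N$ is tight and has weak-$*$ cluster points $\mu$. The first task is to show that any such cluster measure is a \emph{closed measure} in the sense of Section \ref{sec:closedmeas}: this should follow from the telescoping identity $x_{i+1}-x_i=-\varepsilon_i v_i$, which says that the discrete "flux" generated by the vector field $(x,v)\mapsto v$ over the sequence is a telescoping sum bounded by $\diam$ of the bounded set, hence negligible after dividing by $t_N\to\infty$; in the limit this becomes the statement that $\int v\cdot\nabla\varphi(x)\,d\mu(x,v)=0$ for all test functions $\varphi$, i.e. $\mu$ is closed. (I expect this to be exactly the content of one of the preliminary lemmas in Section \ref{sec:prelims}.)

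Next I would invoke path differentiability to control the $v$-marginal behavior of $\mu$ fiberwise. The key structural fact about closed measures supported on the graph data of a path differentiable $f$ is that the "drift" $\int v\,d\mu_x$ of the disintegration $\mu=\int \mu_x\,d(\proj_\#\mu)(x)$ must vanish — this is where the chain rule $(f\circ\gamma)'(t)=v\cdot\gamma'(t)$ for \emph{all} $v\in\partial^c f(\gamma(t))$ enters, forcing compensation of the velocities. For item \ref{ma:criticality}, one argues by contradiction: if $x_0\in\essacc\{x_i\}_i$ but $0\notin\partial^cf(x_0)$, then by upper semicontinuity of $\partial^cf$ and convexity there is a neighborhood $U$ of $x_0$ and a unit vector $e$ and $c>0$ with $v\cdot e\geq c$ for all $v\in\partial^cf(y)$, $y\in U$; the essential-accumulation condition gives a cluster measure $\mu$ with $\mu(U\times\R^n)>0$, and restricting $\mu$ to $U$ and testing closedness against $\varphi$ with $\nabla\varphi\approx e$ on $U$ yields that $f$ decreases at a definite rate along the support, contradicting boundedness of $\{f(x_i)\}_i$ — more precisely, $f(x_i)$ converges along essential times because $f$ is Lipschitz and the trajectory is slow, and a strictly negative average of $v\cdot\nabla f$ is incompatible with closedness. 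Item \ref{ma:oscillationcompcont} is the same closedness-plus-path-differentiability mechanism localized via the weight $\psi$: the hypothesis $\liminf \frac{1}{t_{N_j}}\sum \varepsilon_i\psi(x_i)>0$ guarantees the renormalized weighted measures $\frac{1}{\sum\varepsilon_i\psi(x_i)}\sum\varepsilon_i\psi(x_i)\delta_{(x_i,v_i)}$ have a limit which is (up to the $\psi$-weight) still closed, and path differentiability kills its first velocity moment, which is exactly the asserted limit $=0$. Item \ref{ma:longloops} is somewhat different in flavor: here I would argue that if $\bar T_k$ stayed bounded along a subsequence, then the pieces of trajectory from near $x$ to near $y$ would have bounded total time, hence (being subgradient pieces with shrinking steps) the function values would satisfy $f(y)=\lim f(x_{i'_k})$ and $f(x)=\lim f(x_{i_k})$ with the descent estimate $f(x_{i'_k})\leq f(x_{i_k})+o(1)$ coming from $f$ decreasing along subgradient trajectories plus a uniform bound on how much $f$ can rise over a time interval of bounded length — but one must be careful since $f$ need not be monotone along discrete subgradient sequences; the clean route is via the closed-measure / Lyapunov argument showing the average speed across $\essacc$ tends to $0$, combined with $f(x)\le f(y)$ to rule out the "short hop."

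The main obstacle I anticipate is making the disintegration argument rigorous: showing that a weak-$*$ cluster measure of the $\mu_N$ is genuinely closed requires care with the test-function estimate (a second-order Taylor remainder $\varphi(x_{i+1})-\varphi(x_i)+\varepsilon_i v_i\cdot\nabla\varphi(x_i)=O(\varepsilon_i^2)$, and $\sum\varepsilon_i^2$ may diverge, so one needs $\varepsilon_i\to 0$ to push $\frac{1}{t_N}\sum\varepsilon_i\cdot\varepsilon_i\to 0$ — this uses $\varepsilon_i\searrow 0$ and a Stolz–Cesàro type argument), and then extracting from "closed plus concentrated on $\partial^c f$-graph data of a path differentiable function" the conclusion that fiberwise velocity averages vanish. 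That structural implication — that closed measures attached to path differentiable functions have vanishing velocity drift — is really the heart of the matter and I expect it to be established as a standalone proposition in Section \ref{sec:proofs}, with \ref{ma:main} items \ref{ma:oscillationcompcont} and \ref{ma:criticality} following as localizations of it, and \ref{ma:longloops} following from the resulting "asymptotic slowness" on the accumulation set.
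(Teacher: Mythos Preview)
Your closed-measure framework is exactly the paper's, and you have correctly anticipated the central engine: the standalone result (Theorem~\ref{thm:vxvanishes}) that a closed measure supported in $\operatorname{graph}(-\partial^cf)$ for path differentiable $f$ has $\bar v_x=0$ for $\pi_*\mu$-a.e.\ $x$. With that in hand, items \ref{ma:oscillationcompcont} and \ref{ma:criticality} go through essentially as you say. Two small corrections there. First, for \ref{ma:oscillationcompcont}, the $\psi$-weighted limit is \emph{not} itself closed; what works is to pass to a closed limit $\mu$ of the \emph{unweighted} empirical measures along a further subsequence, and then observe $\int v\,\psi(x)\,d\mu=\int \bar v_x\,\psi(x)\,d\pi_*\mu=0$ while $\int\psi\,d\pi_*\mu>0$ by your liminf hypothesis. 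Second, for \ref{ma:criticality}, your separating-hyperplane contradiction is a detour: once $\bar v_x=0$ a.e.\ and $-\bar v_x\in\partial^cf(x)$ (convexity of $\partial^cf(x)$, Lemma~\ref{lem:centroidclarke}), you get $0\in\partial^cf(x)$ on a $\pi_*\mu$-full set, and closedness of the graph plus Lemma~\ref{lem:essaccsupp} pushes this to all of $\essacc\{x_i\}_i$.

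The genuine gap is item \ref{ma:longloops}. Your ``clean route via closed measures'' does not work here, for a structural reason: the closedness of the limit measure comes from the fact that the closing error (joining endpoints of the interpolant over an interval $I$) is $O(\diam)/|I|$, which is negligible only when $|I|\to\infty$ (this is precisely the hypothesis of Lemma~\ref{lem:longintervals}). But the contradiction hypothesis in \ref{ma:longloops} is that $\bar T_k$ stays \emph{bounded}, so the segments from near $x$ to near $y$ have bounded time-length and their empirical measures do \emph{not} limit to closed measures --- the endpoint defect is of order one. Your descent heuristic $f(x_{i'_k})\leq f(x_{i_k})+o(1)$ is the right intuition, but it cannot be justified at the discrete level for nonsmooth $f$. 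The paper's argument bypasses closed measures entirely for this item: since the interpolant pieces $\gamma|_{[t_{i_k},t_{i'_k}]}$ are uniformly Lipschitz with uniformly bounded domain, Arzel\`a--Ascoli extracts a uniform limit curve $\bar\gamma\colon[a,b]\to\R^n$ joining $x$ to $y$; a Mazur-lemma argument (Lemma~\ref{lem:derivativeclarke}) shows $-\bar\gamma'(t)\in\partial^cf(\bar\gamma(t))$ a.e., and then path differentiability gives
\[
f(y)-f(x)=\int_a^b(f\circ\bar\gamma)'(t)\,dt=-\int_a^b\|\bar\gamma'(t)\|^2\,dt<0
\]
since $x\neq y$ forces $\bar\gamma$ to have positive arc length --- contradicting $f(x)\leq f(y)$. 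So for \ref{ma:longloops} you need curve compactness, not measure compactness.
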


\subsection{Asymptotic dynamics for path differentiable functions with a weak Sard pro\-perty}
\label{sec:pathdiffsard}

With slightly more stringent hypotheses, which are automatically valid for some important cases of lower or upper-$C^k$ functions \cite{barbet2016sard} (for $k$ sufficiently large), semialgebraic or tame functions \cite{Bolte2007}, we have:
\begin{thm}[Asymptotic dynamics for path differentiable functions: weak Sard case] \label{thm:addendum}
 In the setting of Theorem \ref{thm:main}, and if additionally $f$ is constant on the connected components of its critical set, then we also have:
  \begin{enumerate}[label=\roman*.,ref=(\roman*)]
  \item\label{ad:longseparation} (Lengthy separations version 2)  Let $x$ and $y$ be two distinct points in $\acc\{x_i\}_i$, $x\neq y$, and take $\delta>0$ small enough that the balls $B_\delta(x)$ and $B_\delta(y)$ are at a positive distance form each other, that is, $\|x-y\|>2\delta$. Consider the successive amounts of time it takes for the sequence to go from the ball $B_\delta(x)$ to the ball $B_\delta(y)$, namely,
  \[T_j=\inf\{\textstyle\sum_{p=i}^\ell\varepsilon_p:j\leq i<\ell,x_i\in B_\delta(x), x_\ell\in B_\delta(y)\}.\]
  Then $T_j\to+\infty$ as $j\to+\infty$.  
  \item\label{ad:longintervals} (Long intervals) Let $U,V$ be neighborhoods of $\bar{x} \in \acc\{x_i\}_i$ such that $\overline{U} \subset V$. Let $A\subset\N$ be the union $A=\bigcup_iI_i$ of the maximal intervals $I_i\subset\N$ of the form $I_i=[a_i,b_i]\cap \N$ for some $a_i<b_i$, such that $\{x_i\}_{i\in I_j}\subset U$ and $\{x_i\}_{i\in I_j}\cap V\neq\emptyset$. Then either there is some $I_j$ that is unbounded or
\[\lim_{j\to+\infty} |I_j| = \lim_{j\to+\infty}\sum_{i\in I_j}\varepsilon_i=+\infty.\]
  \item\label{ad:oscillationcomp} (Oscillation compensation version 2)   
  Let $U\subset V$ be two open sets as in item \ref{ad:longintervals}, and $A=\bigcup_iI_i$ be the corresponding union of maximal intervals. Then
     \[\lim_{N\to+\infty}\frac{\displaystyle\sum_{\substack{0\leq i\leq N\\i\in A}}\varepsilon_iv_i}{\displaystyle\sum_{\substack{0\leq i\leq N\\i\in A}}\varepsilon_i}=0. \]
  \item\label{ad:criticality} (Criticality) For all $x$ in the (traditional) accumulation set $\acc\{x_i\}_i$, $0\in\partial^cf(x)$. That is to say, $\acc\{x_i\}_i\subseteq\crit f$.
  \item\label{ad:convergencef} (Convergence of the values) The values $f(x_i)$ converge to a real number as $i\to +\infty$.
 \end{enumerate}
\end{thm}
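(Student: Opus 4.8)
The plan is to treat the five items in the order \ref{ad:convergencef} $\to$ \ref{ad:criticality} $\to$ \ref{ad:longseparation}, \ref{ad:longintervals}, \ref{ad:oscillationcomp}, all resting on the same machinery. Fix a compact $K\supseteq\{x_i\}_i$, on which $\partial^cf$ is bounded and $f$ is bounded and uniformly continuous; recall that $\acc\{x_i\}_i$ is compact and connected and that $\essacc\{x_i\}_i\subseteq\crit f$ by Theorem~\ref{thm:main}. I will use three ingredients already available from the closed-measure preliminaries and the proof of Theorem~\ref{thm:main}: \textbf{(A)} \emph{limit curves}: whenever $a_k<b_k$, $a_k\to\infty$, $\sum_{p=a_k}^{b_k-1}\varepsilon_p\to T\in(0,\infty)$, $x_{a_k}\to\xi$, $x_{b_k}\to\eta$, a subsequence of the piecewise-linear interpolants converges uniformly to a subgradient curve $\sigma\colon[0,T]\to K$ with $\sigma(0)=\xi$, $\sigma(T)=\eta$ and $\sigma(t)\in\acc\{x_i\}_i$ for every $t$ (with an analogous backward/ascending version); \textbf{(B)} \emph{descent}: along a subgradient curve of the path differentiable $f$ one has $(f\circ\sigma)'=-\|\sigma'\|^2$ a.e., so $f\circ\sigma$ is nonincreasing, is constant on a subinterval only if $\sigma$ is, and has $\omega$-limit set in $\crit f$ (mirror statements for ascending curves); \textbf{(C)} \emph{compensation}: any weak-$*$ limit of $\tfrac1{m_N}\sum_{i\le N,\,i\in B}\varepsilon_i\delta_{(x_i,v_i)}$ with $m_N\to\infty$ and with the number of maximal blocks of $B\cap[0,N]$ being $o(m_N)$ is a closed measure carried by the graph of $\partial^cf$, hence has $v=0$ almost everywhere.

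For \ref{ad:convergencef} (the crux): suppose $a:=\liminf_i f(x_i)<\limsup_i f(x_i)=:c$. Since $f(x_{i+1})-f(x_i)\to0$, the sequence $f(x_i)$ meets every level of $(a,c)$ infinitely often. Using that $f$ is constant on the connected components of $\crit f$, I would fix a \emph{regular band}: a level $\beta\in(a,c)$ and $\epsilon>0$ with $\{x\in K:|f(x)-\beta|\le\epsilon\}\cap\crit f=\varnothing$; by lower semicontinuity of $x\mapsto\dist(0,\partial^cf(x))$ and compactness, $\dist(0,\partial^cf(x))\ge c_\beta>0$ on this band. For infinitely many upcrossings take $a_k$ the last index with $f(x_{a_k})\le\beta-\epsilon$ and $b_k>a_k$ the first with $f(x_{b_k})\ge\beta+\epsilon$, so $x_p$ lies in the band for $a_k<p<b_k$; the band times $\tau_k=\sum_{p=a_k}^{b_k-1}\varepsilon_p$ stay bounded, for otherwise (A)--(B) would yield a subgradient curve in the band along which $f$ drops by at least $c_\beta^2\tau_k\to\infty$. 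Extracting a limit curve $\sigma\colon[0,\tau]\to K$ with $\tau>0$ inside the closed band and $f(\sigma(0))\le\beta-\epsilon<\beta+\epsilon\le f(\sigma(\tau))$ contradicts the monotonicity in (B); hence $a=c$ and $f(x_i)\to L$ for some $L$.

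Item \ref{ad:criticality} then follows: if $w\in\acc\{x_i\}_i\setminus\crit f$, extract a forward subgradient curve $\sigma\colon[0,\tau_0]\to\acc\{x_i\}_i$ with $\sigma(0)=w$; as $w\notin\crit f$, (B) gives $f(\sigma(\tau_0))<f(w)=L$ for small $\tau_0$, while $\sigma(\tau_0)=\lim_k x_{i_k+m_k}$ forces $f(x_{i_k+m_k})\to f(\sigma(\tau_0))<L$, contradicting $f(x_i)\to L$. So $\acc\{x_i\}_i\subseteq\crit f$, and being connected it sits in one component of $\crit f$, whence $f\equiv L$ on $\acc\{x_i\}_i$. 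Items \ref{ad:longseparation}, \ref{ad:longintervals}, \ref{ad:oscillationcomp} then all run on the principle \emph{a short limiting passage is a stationary subgradient curve}: for \ref{ad:longseparation}, if $T_j\not\to\infty$ one gets, along a subsequence, passages $B_\delta(x)\rightsquigarrow B_\delta(y)$ of bounded time, hence by (A) a subgradient curve from $x'\in\overline{B_\delta(x)}$ to $y'\in\overline{B_\delta(y)}$ with $x'\ne y'$ (as $\|x-y\|>2\delta$) and $f(x')=f(y')$ (both in $\acc\{x_i\}_i$), hence constant by (B) — contradiction. For \ref{ad:longintervals}, a run staying in $V$ and meeting $U$ must cross the collar between $\partial V$ and $\overline U$, costing a definite time; if such runs had bounded time along a subsequence, (A) would give a subgradient curve in $\overline V$ through $\overline U$ at an interior time, with endpoints in $\acc\{x_i\}_i$ hence at the same level, hence constant, impossible since $\overline U\cap\partial V=\varnothing$; this yields $\sum_{i\in I_j}\varepsilon_i\to\infty$, and $|I_j|\to\infty$ because $\varepsilon_i$ is bounded. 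For \ref{ad:oscillationcomp}, with $A=\bigcup_j I_j$ each $I_j$ carries time bounded below (two collar crossings), so $m_N=\sum_{i\le N,\,i\in A}\varepsilon_i\to\infty$; since the per-interval times tend to infinity, the number $J(N)$ of intervals met up to $N$ satisfies $J(N)/m_N\to0$; telescoping $\phi(x_i)-\phi(x_{i+1})$ along each $I_j$ leaves a boundary term $O(J(N))=o(m_N)$ and an interior error $o(m_N)$ (from $\varepsilon_i\to0$), so every weak-$*$ limit of $\tfrac1{m_N}\sum_{i\le N,\,i\in A}\varepsilon_i\delta_{(x_i,v_i)}$ is closed on the graph of $\partial^cf$, and (C) forces its velocity marginal to be $\delta_0$, i.e. $\sum_{i\le N,\,i\in A}\varepsilon_iv_i\big/\sum_{i\le N,\,i\in A}\varepsilon_i\to0$.

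The main obstacle is \ref{ad:convergencef}: this is the only place where the weak Sard hypothesis is genuinely used (to split the oscillation range by a regular band), and it is where the discrete-to-continuous passage must be controlled uniformly over time windows of bounded length — exactly what ingredient (A) provides. Once \ref{ad:convergencef} and \ref{ad:criticality} hold, everything else is a rerun of "a short limiting passage is a constant subgradient curve" plus the closed-measure compensation lemma; the only bookkeeping points are the boundedness of the band/collar times (forced by finiteness of $\sup_K f-\inf_K f$) and the estimate $J(N)=o(m_N)$.
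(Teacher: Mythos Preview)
Your proposal is correct, and for items \ref{ad:longseparation}, \ref{ad:longintervals}, \ref{ad:oscillationcomp} it coincides with the paper's argument (Lemma~\ref{lem:generaltechnique} plus the closed-measure compensation via Lemma~\ref{lem:longintervals} and Theorem~\ref{thm:vxvanishes}). The genuine difference is in the order and in how the key fact ``$f$ is constant on $\acc\{x_i\}_i$'' is obtained. The paper proves this first, as a standalone Lemma~\ref{lem:acc}, by a closed-measure circulation argument: on the band-crossing intervals it computes $\int\partial^cf\,d\mu$ two ways, obtaining $\geq (c_2-c_1)/\beta>0$ from the crossings and $=-\int\|\bar v_x\|^2\leq 0$ from the centroid field, a contradiction; items \ref{ad:longseparation}--\ref{ad:convergencef} then all flow from that lemma, with \ref{ad:criticality} derived from the measure construction in \ref{ad:oscillationcomp} and \ref{ad:convergencef} deduced last from \ref{ad:criticality} plus connectedness. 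You instead prove \ref{ad:convergencef} first and directly, by your limit-curve ingredients (A)--(B): a limiting subgradient curve across the regular band would have $f$ rising by $2\epsilon$ while $f\circ\sigma$ must be nonincreasing. This is more elementary in that it avoids the circulation identity entirely; your proof of \ref{ad:criticality} (a short forward subgradient curve from a noncritical accumulation point forces $f$ below $L$) is likewise more direct than the paper's route through \ref{ad:oscillationcomp}. One small point to tighten: the existence of your regular band needs $f(K\cap\crit f)$ to be nowhere dense in $(a,c)$; the paper justifies this by showing it has Lebesgue measure zero (countably many positive-measure components contribute countably many values, and the null-measure remainder has null Lipschitz image), whereas you invoke the weak Sard hypothesis without spelling this step out.
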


\begin{rmk}
 Items \ref{ad:criticality} and \ref{ad:convergencef} of Theorem \ref{thm:addendum} can also be deduced from \cite[Proposition 3.27]{BHS} using a different approach. Up to our knowledge, items \ref{ad:longseparation}--\ref{ad:oscillationcomp} of Theorem \ref{thm:addendum} as well as Theorem \ref{thm:main} do not have counterparts in the optimization literature. %
\end{rmk}

\subsection{Oscillation structure and asymptotics for Whitney stratifiable functions} 
\label{sec:whitney}

The two next corollaries express that oscillations happen perpendicularly to the singular set of $f$,  whenever it makes sense. In particular, they are perpendicular to  $\essacc\{x_i\}_i$ and $\acc\{x_i\}_i$, respectively, wherever this is well defined. 

\begin{coro}[Perpendicularity of the oscillations]\label{cor:perp}
 In the setting of Theorem \ref{thm:addendum} (resp. Theorem \ref{thm:main}), let $(x,v)\in\R^n\times\R^n$ be in the accumulation set (resp. essential accumulation set) of $\{(x_i,v_i)\}_i$, and
 $\alpha\colon (-1,1)\to\R^n$ be a Lipschitz curve with the property that $\alpha(0)=x,\alpha'(0)=w$, and $t\to (f\circ \alpha)(t)$ is differentiable at $t=0$ and $(f\circ\alpha)'=v\cdot\alpha'(0)$ for all $v\in\partial^cf(\alpha(0))$. Then 
 \[w\cdot v=0,\] for all $v\in\partial^c f(x)$. In other words $w\in \left[\partial^c f(x)\right]^{\perp}$. 
\end{coro}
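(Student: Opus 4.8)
The plan is to reduce the statement to the criticality assertions of Theorems~\ref{thm:main} and~\ref{thm:addendum}, using the hypothesis on $\alpha$ only as an algebraic constraint. Indeed, set $c=(f\circ\alpha)'(0)$; the hypothesis on $\alpha$ says exactly that $v\cdot w=c$ for \emph{every} $v\in\partial^cf(x)=\partial^cf(\alpha(0))$, i.e.\ that $\partial^cf(x)$ is contained in the affine hyperplane $\{u\in\R^n:u\cdot w=c\}$. Hence it is enough to prove that $c=0$: then $w\cdot v=0$ for all $v\in\partial^cf(x)$, which is precisely $w\in[\partial^cf(x)]^{\perp}$.

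To obtain $c=0$ I would proceed in three short steps. First, if $(x,v)$ lies in the essential accumulation set (resp.\ the accumulation set) of $\{(x_i,v_i)\}_i$, then $x$ lies in $\essacc\{x_i\}_i$ (resp.\ $\acc\{x_i\}_i$): for every neighborhood $U$ of $x$ in $\R^n$ one has the equality of index sets $\{i:x_i\in U\}=\{i:(x_i,v_i)\in U\times\R^n\}$, and since $U\times\R^n$ is a neighborhood of $(x,v)$, the right-hand set witnesses membership of $(x,v)$ in the (essential) accumulation set --- either being infinite, for the plain set, or carrying positive $\limsup$ of $\varepsilon_i$-mass, for the essential one --- hence the left-hand set does the same for $x$. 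Second, invoke criticality: Theorem~\ref{thm:main}\ref{ma:criticality} in the essential-accumulation regime, Theorem~\ref{thm:addendum}\ref{ad:criticality} in the accumulation regime, to conclude $0\in\partial^cf(x)$. Third, apply the chain-rule hypothesis on $\alpha$ to the particular subgradient $0\in\partial^cf(x)$: this gives $c=(f\circ\alpha)'(0)=0\cdot w=0$, and the corollary follows.

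I do not expect a genuine obstacle inside the corollary once the two theorems are available; the only points needing care are quoting the correct criticality item in each of the two regimes and the (elementary) projection of the accumulation sets onto the position variable. The real substance lives upstream, in the criticality parts, which rest on the closed-measure machinery. If one prefers an argument that stays closer to the oscillation theme, one can instead take a continuous bump $\psi\colon\R^n\to[0,1]$ supported near $x$ and equal to $1$ on a smaller ball, use $x\in\essacc\{x_i\}_i$ to extract a subsequence $\{N_j\}_j$ along which the hypothesis of Theorem~\ref{thm:main}\ref{ma:oscillationcompcont} holds, and combine the resulting vanishing of the $\psi$-weighted average of the $v_i$ with the fact that, by outer semicontinuity of $\partial^cf$, every $v_i$ with $x_i$ close to $x$ satisfies $v_i\cdot w\approx c$; shrinking the support of $\psi$ then forces $c=0$. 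This variant is longer but makes transparent why the conclusion deserves to be read as perpendicularity of the oscillations to the singular set.
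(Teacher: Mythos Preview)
Your proposal is correct and follows essentially the same route as the paper: both arguments extract $0\in\partial^cf(x)$ from the criticality items of Theorems~\ref{thm:main}/\ref{thm:addendum}, apply the chain-rule hypothesis on $\alpha$ to this particular subgradient to get $(f\circ\alpha)'(0)=0$, and conclude $w\cdot v=0$ for every $v\in\partial^cf(x)$. Your write-up is in fact slightly more explicit than the paper's on the projection step (from $(x,v)\in\essacc\{(x_i,v_i)\}_i$ to $x\in\essacc\{x_i\}_i$); the only small addition the paper makes is to note, via closedness of the graph of $\partial^cf$, that the specific limit $v$ coming from the pair $(x,v)$ lies in $\partial^cf(x)$, which places it under the ``for all $v$'' conclusion---you may wish to record that one line as well.
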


Stratifiable functions (cf. Definition \ref{def:stratifiablefunc}) allow to provide much more insight into  the oscillation compensation phenomenon:  we have seen that substantial oscillations, i.e., those generated by non vanishing subgradients, must be structured orthogonally to the limit point locus. Whitney rigidity then forces the following intuitive phenomenon: substantial bouncing drives the sequence to have limit points lying in the bed of V-shaped valleys formed by the graph of $f$.  

\begin{coro}[Oscillations and $V$-shaped valleys]\label{cor:whitney}
 Let $f\colon\R^n\to\R$ is a Whitney $C^{n}$ stratifiable function, and let $x$ be a point in the accumulation set of a sequence $\{x_i\}_i$ generated by the subgradient method as in Definition \ref{def:subgradient}. Assume that there is a subsequence $x_{i_j}\to x$ with
 \[\limsup_{j\to+\infty}\|v_{i_j}\|>0, \]
 then $x$ is contained in a stratum $S$ of dimension less than $n$, and if $w$ is tangent to $S$ at $x$ then
 \[\lim_{j\to+\infty}w\cdot v_{i_j}=0.\]
\end{coro}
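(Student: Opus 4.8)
I would reduce the statement to the perpendicularity principle of Corollary~\ref{cor:perp}; the only substantive new work is to locate the stratum through $x$, bound its dimension, and exhibit, for each of its tangent directions, a Lipschitz curve along which $f$ obeys the chain rule \emph{at the base point} $x$ rather than merely almost everywhere. As a preliminary, since the graph of $f$ carries a Whitney $C^{n}$ stratification and the projection $\R^{n+1}\to\R^{n}$ restricts to a $C^{n}$ diffeomorphism on each graph stratum (a function graph has no stratum containing the vertical direction), one obtains a locally finite $C^{n}$ Whitney stratification $\{M_{\alpha}\}$ of $\R^{n}$ with each $f|_{M_{\alpha}}$ of class $C^{n}$. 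Then $f$ is path differentiable, so Theorem~\ref{thm:main} applies; by the projection formula for Clarke subgradients of stratifiable functions (see \cite{Bolte2007}), $\proj_{T_{y}M_{\alpha}}(v)=\nabla(f|_{M_{\alpha}})(y)$ for $y\in M_{\alpha}$ and all $v\in\partial^{c}f(y)$, so $\crit f\cap M_{\alpha}\subseteq\crit(f|_{M_{\alpha}})$; since $\dim M_{\alpha}\le n$ and $f|_{M_{\alpha}}$ is $C^{n}$, the Morse--Sard theorem makes $f(\crit(f|_{M_{\alpha}}))$ Lebesgue-null, hence $f(\crit f)$ is null and $f$ is constant on each connected component of $\crit f$. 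Therefore Theorem~\ref{thm:addendum} and the accumulation-set version of Corollary~\ref{cor:perp} also apply (with $\{x_{i}\}_{i}$ bounded, as throughout).

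\textbf{Step 1 (low dimension).} Let $S=M_{\alpha_{0}}$ be the stratum containing $x$. If $\dim S=n$ then $S$ is open, $f$ is $C^{n}$ near $x$, and $\partial^{c}f(x)=\{\nabla f(x)\}$; by item~\ref{ad:criticality} of Theorem~\ref{thm:addendum}, $0\in\partial^{c}f(x)$, so $\nabla f(x)=0$. Since $x_{i_{j}}\to x$ and $S$ is open, $x_{i_{j}}\in S$ for $j$ large, so $v_{i_{j}}=\nabla f(x_{i_{j}})\to\nabla f(x)=0$ by continuity of $\nabla f$, contradicting $\limsup_{j}\|v_{i_{j}}\|>0$. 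Hence $\dim S<n$, which is the first assertion.

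\textbf{Step 2 (tangential compensation).} Fix $w\in T_{x}S$. It suffices to show that every subsequence of $\{v_{i_{j}}\}_{j}$ has a further subsequence along which $w\cdot v_{i_{j}}\to0$. Along such a subsequence, local boundedness of $\partial^{c}f$ on the bounded trajectory lets us assume $v_{i_{j}}\to v$, where $v\in\partial^{c}f(x)$ by outer semicontinuity of $\partial^{c}f$, and $(x_{i_{j}},v_{i_{j}})\to(x,v)$ puts $(x,v)\in\acc\{(x_{i},v_{i})\}_{i}$. Choose a $C^{n}$ curve into $S$ through $x$ with velocity $w$ at $0$ and modify it away from $0$ so that the resulting $\alpha\colon(-1,1)\to\R^{n}$ is Lipschitz with $\alpha(0)=x$, $\alpha'(0)=w$; then $f\circ\alpha=(f|_{S})\circ\alpha$ near $0$ is differentiable at $0$ and, by the projection formula,
\[
(f\circ\alpha)'(0)=\langle\nabla(f|_{S})(x),w\rangle=\langle\proj_{T_{x}S}(v'),w\rangle=v'\cdot\alpha'(0)\qquad\text{for all }v'\in\partial^{c}f(x).
\]
Thus $\alpha$ and the pair $(x,v)$ meet the hypotheses of Corollary~\ref{cor:perp}, yielding $w\cdot v'=0$ for all $v'\in\partial^{c}f(x)$, in particular $w\cdot v=0$; that is, $w\cdot v_{i_{j}}\to0$ along this subsequence. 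Letting the subsequence vary gives $\lim_{j}w\cdot v_{i_{j}}=0$.

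\textbf{Main obstacle.} The difficulty is concentrated in Step~2: path differentiability only secures $(f\circ\gamma)'(t)=v\cdot\gamma'(t)$ for \emph{almost every} $t$, and says nothing at the single exceptional time $t=0$ where the accumulation occurs. This is exactly where the rigidity of Whitney stratifiability enters, through the stratified projection formula, which identifies $\nabla(f|_{S})(x)$ with the tangential component of every Clarke subgradient at the precise point $x$. The remaining ingredients—the passage $v_{i_{j}}\to v\in\partial^{c}f(x)$, the Morse--Sard bookkeeping for the weak Sard property, and the dimension count—are routine.
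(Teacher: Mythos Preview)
Your proof is correct and follows essentially the same route as the paper: establish the weak Sard property via Morse--Sard on strata, apply Theorem~\ref{thm:addendum}\ref{ad:criticality} to force $x\in\crit f$, rule out a top-dimensional stratum, and then invoke Corollary~\ref{cor:perp} for the perpendicularity. You supply considerably more detail than the paper does---in particular, you make explicit the projection-formula step $\crit f\cap M_\alpha\subseteq\crit(f|_{M_\alpha})$ that justifies Morse--Sard, and you spell out the construction of the curve $\alpha$ inside $S$ together with the verification of the chain-rule hypothesis of Corollary~\ref{cor:perp} at the single time $t=0$, whereas the paper simply asserts that ``the last statement follows from Corollary~\ref{cor:perp}.''
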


This geometrical setting  is reminiscent of the partial smoothness assumptions of Lewis: a smooth path  lies in between the slopes of a sharp valley. While proximal-like methods end up in a finite time on the smooth locus \cite[Theorem 4.1]{hare2004identifying}, our result suggests that the explicit subgradient method keeps on bouncing, approaching the smooth part without actually attaining it. This confirms the intuition that finite identification does not occur, although oscillations eventually provide some information on active sets by their 
``orthogonality features.''
\subsection{Further discussion}
\label{sec:interpretations}

Theorems \ref{thm:main} and \ref{thm:addendum} describe the long-term dynamics of the algorithm. While Theorem \ref{thm:main} only talks about what happens close to $\essacc\{x_i\}_i$ and explains only what the most frequent persistent behavior is, Theorem \ref{thm:addendum} covers all of $\acc\{x_i\}_i$ and hence all recurrent behaviors. 

\paragraph{Oscillation compensation.} 
While the high-frequency oscillations (i.e., bouncing) will, in many cases, be considerable, they almost cancel out. This is what we refer to as oscillation compensation. The intuitive picture the reader should have in mind is a statement that the oscillations cancel out locally, as in \eqref{eq:intuitivecompensation}. %
Yet, because of small technical minutia, we do not have exactly  \eqref{eq:intuitivecompensation} and obtain instead very good approximations.
 Let us provide some explanations. 

Letting, in item \ref{ma:oscillationcompcont} of Theorem \ref{thm:main},
$\psi=\psi_{\delta,\eta}\colon\R^n\to[0,1]$ be a continuous cutoff function equal to 1 on a ball $B_\eta(x)$ of radius $\eta>0$ around a point $x\in\essacc\{x_i\}_i$ and vanishing outside the ball $B_\delta(x)$ for $\delta>\eta$, then we get, for appropriate subsequences $\{N_j\}_j\subset\N$,
\[\lim_{\delta\searrow0}\lim_{\eta\nearrow \delta}\lim_{j\to +\infty}\frac{\displaystyle\sum_{i=0}^{N_j}\varepsilon_iv_i\psi_{\delta,\eta}(x_i)}{\displaystyle\sum_{i=0}^{N_j}\varepsilon_i\psi_{\delta,\eta}(x_i)}=0,\]
which is indeed a very good approximation of \eqref{eq:intuitivecompensation}.

Similarly, setting, in item \ref{ad:oscillationcomp} of Theorem \ref{thm:addendum}, $U=B_{\eta}(x)$ and $V=B_{\delta}(x)$ the balls centered at $x $ with radius $0<\eta<\delta$, we obtain this  local version of the oscillation cancelation phenomenon: in the setting of Theorem \ref{thm:addendum} if $x\in\acc\{x_i\}_i$ and if $A_{\eta,\delta}\subset\N$ is the union of maximal intervals $I\subset \N$ such that $\{x_i\}_{i\in I}\in B_\delta(x)$ and $\{x_i\}_{i\in I}\cap B_\eta(x)\neq\emptyset$, then
\[\lim_{\delta\searrow 0}\lim_{\eta\nearrow\delta}\lim_{N\to+\infty} \frac{\displaystyle\sum_{\substack{0\leq i\leq N\\x_i\in A_{\eta,\delta}}}\varepsilon_iv_i}{\displaystyle\sum_{\substack{0\leq i\leq N\\x_i\in A_{\eta,\delta}}}\varepsilon_i}=0.\]
Note that as we take the limit $\eta\nearrow\delta$, we cover almost all $x_i$ in the ball $B_\delta(x)$, so we again get a statement very close to \eqref{eq:intuitivecompensation}. 

\paragraph{Convergence.}
While Theorem \ref{thm:addendum} tells us that $f(x_i)$ converges, we conjecture that this is no longer true in the context of Theorem \ref{thm:main}, which is a matter for future research. Similarly, in the setting of path differentiable functions, the question of determining whether all limit points of bounded sequences are critical remains open.

In all cases, including the Whitney stratifiable case, the sequence $\{x_i\}_i$ may not converge. A well-known example of such a situation was provided for the case of smooth $f$  by Palis--de Melo \cite{palisdemelo}.

However, our results show that the drift that causes the divergence of $\{x_i\}_i$ is very slow in comparison with the local oscillations. This slowness can be immediately appreciated in the statement of item \ref{ma:longloops} of Theorem \ref{thm:main} and items \ref{ad:longseparation} and \ref{ad:longintervals} of Theorem \ref{thm:addendum}. In substance, these results express that even if the sequence diverges, it takes longer and longer to connect disjoint neighborhoods of different limit points.

\section{A closed measure theoretical approach}
\label{sec:prelims}

Given an open subset of $\R^n$, denote by $C^0(U)$ the set of continuous functions while $C^p(U)$ is the set of $p\in [1,\infty]$ continuously differentiable functions. The set $\lipschitz (U)$ denotes the space of Lipschitz curves $\gamma\colon\R\to U$. When $U$ is bounded it is endowed with the supremum norm $\|\gamma\|_\infty=\sup_{t\in\R}\|\gamma(t)\|$. 
\subsection{A compendium on closed measures}
\label{sec:closedmeas}

\paragraph{General results.}

Given a measure $\xi$ on some set $X\neq \emptyset$ and a measurable map $g\colon X\to Y$, where $Y\neq\emptyset$ is another set, the \emph{pushfoward $g_*\xi$} is defined to be the measure on $Y$ such that, for $A\subset Y$ measurable, $g_*\xi(A)=\xi(g^{-1}(A))$.

Recall that the \emph{support $\supp\mu$} of a positive Radon measure $\mu$ on $\R^m$, $m\geq 0$,  is the set of points $x\in \R^m$ such that $\mu(U)>0$ for every neighborhood $U$ of $x$. It is a closed set.

The origin of the concept of closed measures (sometimes also called {\em holonomic measures} or {\em Young measures}) can be traced back to the work of L.C. Young \cite{young1969,young1937generalized} in the context of the calculus of variations. It has developed in parallel to the closely related normal currents \cite{federer,federerbook} and varifolds \cite{almgrenvarifolds,allard}, and has found applications in several areas of mathematics, especially Lagrangian and Hamiltonian dynamics \cite{matheractionminimizing91,manhe,contrerasiturriagabook,sorrentino2015action}, and also optimal transport \cite{patrick,bernardbuffoni}. 

The definition of closed measures is inspired from the following observations. Given a curve $\gamma\colon[a,b]\to\R^n$, its position-velocity information can be encoded by a measure $\mu_\gamma$ on $\R^n\times \R^n$ that is the pushforward of the Lebesgue measure on the interval $[a,b]$ into $\R^n\times\R^n$ through the mapping $t\mapsto (\gamma(t),\gamma'(t))$, that is,
\[\mu_\gamma=(\gamma,\gamma')_*\lebesgue_{[a,b]}.\]
In other words, if $\phi\colon\R^n\times\R^n\to\R$ is a measurable function, then the integral with respect to $\mu_\gamma$ is given by
\[\int_{\R^n\times\R^n}\phi(x,v)\,d\mu_\gamma(x,v)=\int_a^b\phi(\gamma(t),\gamma'(t))\,dt.\]
With this definition of $\mu_\gamma$ it follows that $\gamma$ is closed, that is, $\gamma(a)=\gamma(b)$ if, and only if, for all smooth $f\colon \R^n\to\R$, we have
\begin{multline*}\int_{\R^n\times \R^n}\nabla f(x)\cdot v\,d\mu_\gamma(x,v)=\int_a^b\nabla f(\gamma(t))\cdot\gamma'(t)\,dt %
=
\int_a^b(f\circ\gamma)'(t)dt=f\circ\gamma(b)-f\circ\gamma(a)=0.
\end{multline*}
In other words, the integral of $\nabla f(x)\cdot v$ with respect to $\mu_\gamma$ is exactly the circulation of the gradient vector field $\nabla f$ along the closed curve $\gamma$, and so it vanishes exactly when $\gamma$ is closed. This generalizes into:

\begin{defn}[Closed measure]\label{def:closedmeasure}
 A compactly-supported, positive, Radon measure $\mu$ on $\R^n\times\R^n$ is \emph{closed} if, for all functions $f\in C^\infty(\R^n)$, 
 \[\int_{\R^n\times\R^n}\nabla f(x)\cdot v\,d\mu(x,v)=0.\]
\end{defn}

Let $\pi\colon\R^n\times\R^n\to\R^n$ be the projection $\pi(x,v)=x$. To a measure $\mu$ in $\R^n\times\R^n$ we can associate its \emph{projected measure $\pi_*\mu$}. As an immediate consequence we have that $\supp\pi_*\mu=\pi(\supp\mu)\subseteq\R^n$. 

The disintegration theorem \cite{dellacherie1978probabilities} implies that there are probability measures $\mu_x$, $x\in\R^n$, on $\R^n$ such that
\begin{equation}\label{eq:desintegration}
 \mu=\int_{\R^n}\mu_x\,d(\pi_*\mu)(x). 
\end{equation}
We shall refer to the couple $(\pi_*\mu,\pi_x)$ as to the {\em desintegration} of $\mu$.
Thus if $\phi\colon\R^n\times\R^n\to\R$ is measurable, we have
\[\int_{\R^n\times\R^n}\phi\,d\mu=\int_{\R^n}\left[\int_{\R^n}\phi(x,v)\,d\mu_x(v)\right]d(\pi_*\mu)(x).\]
\begin{defn}[Centroid field]\label{def:centroid}
 Let $\mu$ be a positive, compactly-supported, Radon measure on $\R^n\times\R^n$. The \emph{centroid field $\bar v_x$} of $\mu$ is, for $x\in\R^n$ and with the decomposition \eqref{eq:desintegration},
 \[\bar v_x=\int_{\R^n} v\,d\mu_x(v).\]
\end{defn}

 The centroid field gives the average velocity, that is, the average of the velocities encoded by the measure at each point. As a consequence of the disintegration theorem \cite{dellacherie1978probabilities}, $x\mapsto\bar v_x$ is measurable, and for every measurable $\phi \colon \R^n\times\R^n \mapsto \R$ linear in the second variable, we have
\begin{align}\label{eq:linearityvbar}
    \int_{\R^n\times\R^n}\phi(x,v) \,d\mu(x,v)=\int_{\R^n} \phi(x,\bar v_x)\, d(\pi_*\mu)(x).
\end{align}
It plays a significant role in our work.  
For later use, we record the following facts that follow from the definition of the centroid field, the convexity of $\partial^cf(x)$, and the fact that $\mu_x$ is a probability:

\begin{lem}[Quasi-stationary bundle measures]\label{lem:centroidclosed}
 If a positive Radon measure $\mu$ has a centroid field $\bar v_x$ that vanishes $\pi_*\mu$-almost everywhere, then $\mu$ is closed. 
\end{lem}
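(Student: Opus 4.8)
The plan is to unwind the definition of ``closed'' directly against the definition of the centroid field, using the disintegration and the linearity formula \eqref{eq:linearityvbar}. Fix $f\in C^\infty(\R^n)$. The integrand $(x,v)\mapsto \nabla f(x)\cdot v$ is, for each fixed $x$, a linear function of $v$, so formula \eqref{eq:linearityvbar} applies with $\phi(x,v)=\nabla f(x)\cdot v$. It gives
\[
\int_{\R^n\times\R^n}\nabla f(x)\cdot v\,d\mu(x,v)=\int_{\R^n}\nabla f(x)\cdot\bar v_x\,d(\pi_*\mu)(x).
\]
By hypothesis $\bar v_x=0$ for $\pi_*\mu$-almost every $x$, so the right-hand integrand vanishes $\pi_*\mu$-a.e., hence the integral is $0$. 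Since $f$ was arbitrary, $\mu$ is closed by Definition \ref{def:closedmeasure}.

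There is essentially nothing deep here: the only things to check are measurability of $x\mapsto\bar v_x$ (so that the right-hand side makes sense), which is asserted just after Definition \ref{def:centroid} as a consequence of the disintegration theorem, and the applicability of \eqref{eq:linearityvbar}, which requires $\phi$ to be measurable and linear in the second variable — both clear for $\phi(x,v)=\nabla f(x)\cdot v$ with $f$ smooth (indeed $\nabla f$ is continuous, so $\phi$ is continuous, and it is manifestly linear in $v$). One should also note that $\mu$ is assumed positive Radon; for the statement to be meaningful one presumably also wants $\mu$ compactly supported as in Definition \ref{def:closedmeasure} (the lemma is applied in that setting), and then all integrals converge absolutely since $\nabla f(x)\cdot v$ is bounded on the compact support.

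The references in the lemma's preamble to ``convexity of $\partial^c f(x)$'' and ``$\mu_x$ a probability'' are red herrings for this particular statement — they matter for the companion facts the authors ``record for later use'' alongside it (e.g.\ that if each $\mu_x$ is supported in $\partial^c f(x)$ then $\bar v_x\in\partial^c f(x)$), not for the closedness claim itself, which uses only linearity of the pairing in $v$ and the vanishing of the centroid. So if I were writing this up I would simply state: the centroid field is measurable and formula \eqref{eq:linearityvbar} holds; apply it to $\phi(x,v)=\nabla f(x)\cdot v$; conclude.

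The ``main obstacle'' is really just making sure the right hypotheses are in place for \eqref{eq:linearityvbar} — in particular that the argument does not secretly need $\mu_x$ to be a probability measure (it does not; it only needs the definition $\bar v_x=\int v\,d\mu_x(v)$ and the disintegration identity, which hold regardless). So the proof is a two-line computation and I would present it as such, with a sentence pointing out that the convexity/probability remarks are there to support the other ``recorded facts'' rather than this one.
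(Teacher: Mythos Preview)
Your proof is correct and is essentially the same as the paper's: the paper writes out the disintegration $\int\nabla f(x)\cdot v\,d\mu=\int\int\nabla f(x)\cdot v\,d\mu_x\,d(\pi_*\mu)=\int\nabla f(x)\cdot\bar v_x\,d(\pi_*\mu)=0$ explicitly rather than citing \eqref{eq:linearityvbar}, but the content is identical. Your observation that the convexity of $\partial^c f(x)$ and the probability property of $\mu_x$ in the preamble pertain to the companion Lemma~\ref{lem:centroidclarke} rather than to this one is also correct.
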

\begin{proof}
 Indeed, if $\bar v_x=0$ for $\pi_*\mu$-almost every $x$, and if $f\in C^\infty(\R^n)$, we have
 \begin{align*}
  \int_{\R^n\times\R^n} \nabla f(x)\cdot v\,d\mu(x,v)
  &=\int_{\R^n} \int_{\R^n} \nabla f(x)\cdot v\,d\mu_x(v)\,d(\pi_*\mu)(x)\\
  &=\int_{\R^n}  \nabla f(x)\cdot\int_{\R^n} v\,d\mu_x(v)\,d(\pi_*\mu)(x)\\
  &=\int_{\R^n}  \nabla f(x)\cdot\bar v_x\,d(\pi_*\mu)(x)=0,
 \end{align*}
 so $\mu$ is closed.
\end{proof}

Recall that the weak* topology in the space of Radon measures on an open set $U$ is the one induced by the family of seminorms
\[|\mu|_f=\left|\int_U f\,d\mu\right|,\quad f\in C^0(U).\]
Thus a sequence $\{\mu_i\}_i$ of measure converges in this topology to a measure $\mu$ if, and only if, for all $f\in C^0(U)$,
\[\int_U f\,d\mu_i\to \int_U f\,d\mu.\]
The following result can be regarded as a consequence of the forthcoming Theorem \ref{thm:superposition}. It can also be seen as a special case of the results of \cite{federer} that are very well described in \cite[Theorem 1.3.4.6]{giaquintamodica}. Specifically it is shown in \cite[Theorem 1.3.4.6]{giaquintamodica} that it is possible to approximate, in a weak* sense, objects (namely, currents) intimately related to closed measures, by simpler objects (namely, closed polyhedral chains), which in our case correspond to combinations of finitely-many piecewise-smooth, closed curves.

\begin{prop}[Weak* density of closed curves]\label{prop:closedapproximation}
 Consider the set of measures 
 of the form $\beta\mu_\gamma$ for some $\beta>0$ and a measure $\mu_\gamma=(\gamma,\gamma')_*\lebesgue_{[a,b]}$ induced by some closed, smooth curve $\gamma\colon[a,b]\to\R^n$, $\gamma(a)=\gamma(b)$, defined on an interval $[a,b]\subset\R$ (which is not fixed). In the weak* topology, this set is dense in the set of closed measures.
\end{prop}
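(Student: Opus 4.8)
The plan is to prove the two inclusions separately. That every weak* limit of measures $\beta\mu_\gamma$ (with $\gamma$ closed and smooth) is itself closed is routine: each $\beta\mu_\gamma$ is closed by the computation preceding Definition~\ref{def:closedmeasure}, and since for a fixed $f\in C^\infty(\R^n)$ the function $(x,v)\mapsto\nabla f(x)\cdot v$ is continuous, the defining identity $\int\nabla f(x)\cdot v\,d\mu=0$ passes to weak* limits as soon as all the measures involved are supported in one common compact set. The content of the statement is the reverse inclusion: an arbitrary closed measure $\mu$ must be approximated, weak*, by a \emph{single} $\beta\mu_\gamma$. I would attack this in two moves — first reduce $\mu$ to a finite combination of occupation measures of closed loops, then splice that finite combination into one smooth closed curve.

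For the reduction I would invoke the polyhedral approximation of closed currents of Federer--Fleming — the content of \cite[Theorem 1.3.4.6]{giaquintamodica}, see also the forthcoming Theorem~\ref{thm:superposition} — which translates, in the language of closed measures, into the assertion that $\mu$ is a weak* limit of finite combinations $\sum_{k=1}^m\beta_k\mu_{\gamma_k}$ with $\beta_k>0$ and each $\gamma_k$ a closed piecewise-linear loop (the occupation measure of a closed polyhedral $1$-chain), all supported in a fixed compact subset of $\R^n\times\R^n$. Thus it suffices to approximate one such finite combination, weak*, by a single $\beta\mu_\gamma$ with $\gamma$ closed and smooth.

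For the splicing I would first round the finitely many corners of each $\gamma_k$ over short parameter-intervals at bounded speed, turning it into a smooth \emph{periodic} curve $\tilde\gamma_k$ with $\mu_{\tilde\gamma_k}$ as weak*-close to $\mu_{\gamma_k}$ as desired and with all $(\tilde\gamma_k,\tilde\gamma_k')$ still in a fixed compact set. Then, for a small $\beta>0$, choose integers $N_k$ nearest to $\beta_k/\beta$ and let $\gamma$ traverse $\tilde\gamma_1$ a total of $N_1$ times (as a periodic curve, so consecutive traversals glue smoothly, with no new corner), run a straight segment inside the compact set at unit speed to $\tilde\gamma_2$, traverse it $N_2$ times, and so on, finally returning to the start along one last segment; round the $O(m)$ corners at the segment endpoints over short intervals. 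All the ``overhead'' — corner roundings together with the connecting segments — has total parameter-length and velocity-range bounded by quantities depending only on $m$ and the $\gamma_k$, \emph{independent of} $\beta$, whereas the main part of $\mu_\gamma$ is exactly $\sum_kN_k\mu_{\tilde\gamma_k}$, whose mass is of order $1/\beta$. Writing $\beta\mu_\gamma=\beta\sum_kN_k\mu_{\tilde\gamma_k}+(\text{overhead})$: the overhead pairs with any continuous test function $\phi$ as $O(\beta\sup_{\mathrm{cpt}}|\phi|)\to0$ as $\beta\to0$, while $\beta\sum_kN_k\mu_{\tilde\gamma_k}\to\sum_k\beta_k\mu_{\tilde\gamma_k}$ since $|\beta N_k-\beta_k|\le\beta/2$. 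A diagonal argument across all the approximations then concludes.

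The step I expect to be the main obstacle is the splicing and its bookkeeping. The reason each loop is traversed $\sim 1/\beta$ times is precisely to make the connecting/rounding overhead — whose size is fixed by $m$ and the loops, not by $\beta$ — negligible after the normalization by $\beta$; and the reason the connectors and roundings are run at bounded speed is that this keeps the overhead's velocity-support inside a fixed ball, so that smallness of its \emph{mass} forces smallness of \emph{all} its weak* pairings, including against test functions that are large where $\|v\|$ is large. The polyhedral-approximation input is used as a black box; the remaining verifications — that corner-rounding perturbs $\mu_{\gamma_k}$ weak*-little, that repeated traversals of a smooth periodic loop create no corner, that the polyhedral chains can be taken with images and velocities in a fixed compact set — are routine, provided one is careful throughout to keep every parametrization's velocity inside a single fixed compact set so that dominated convergence is available.
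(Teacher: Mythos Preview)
Your approach is correct and coincides with the paper's: the paper likewise invokes the polyhedral approximation of closed currents from \cite[Theorem 1.3.4.6]{giaquintamodica} (and also points to Theorem~\ref{thm:superposition}) as the essential input, and gives no further argument beyond those citations. Your explicit splicing step --- concatenating the finitely many closed loops into a single smooth closed curve by traversing each one $\sim 1/\beta$ times and joining them with short bounded-speed connectors whose contribution vanishes after scaling by $\beta$ --- thus supplies detail that the paper leaves to the reader.
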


Since the space of measures is sequential, this proposition means that for any closed measure $\mu$, we can find a sequence of closed curves $\gamma_1,\gamma_2,\dots$ that approximate $\mu$ in the sense that $\mu_{\gamma_i}\to\mu$ in the weak* topology. 

The following result, known as the \emph{Young superposition principle} \cite{young1969,patrick} or as the \emph{Smirnov solenoidal representation} \cite{smirnov1993decomposition,bangert1999minimal}, is a strong refinement of the assertion of Proposition \ref{prop:closedapproximation}; see also \cite[Example 6]{rodolfo}. What this result tells us is basically that, not only can closed measures be approximated by measures induced by curves, but actually the centroidal measure 
\[\int \delta_{(x,\bar v_x)}\,d (\pi_*\mu)(x), \]
which captures much of the properties of $\mu$, can be decomposed into a combination of measures induced by Lipschitz curves. This decomposition is very useful theoretically, as there are no limits involved. For completeness, the following is proved in Section \ref{sec:pfsuperposition}.

\begin{thm}[Young superposition principle/Smirnov solenoidal representation]\label{thm:superposition}
 Let $U$ be a non\-emp\-ty bounded open subset of $\R^n$ and set $\lipschitz(U)=\lipschitz$. 
 For $t\in\R$, let $\tau_t\colon\lipschitz\to\lipschitz$ be the time-translation $\tau_t(\gamma)(s)=\gamma(s+t)$.
 For every closed probability measure $\mu$ supported in $U$ with centroid field $\bar v_x$, there is a Borel probability measure $\nu$ on the space $\lipschitz$ that is invariant under $\tau_t$ for all $t\in \R$ and such that
 \begin{equation}\label{eq:superposition}
  \int_{\R^n} \phi(x,\bar v_x)\,d(\pi_*\mu)(x)=\int_\lipschitz \phi(\gamma(0),\gamma'(0))\,d\nu(\gamma) 
 \end{equation}
 for any measurable $\phi\colon\R^n\times\R^n\to\R$.
\end{thm}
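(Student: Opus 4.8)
I would prove Theorem \ref{thm:superposition} by first reducing to an ergodic-theoretic statement about a flow on the (compact) space of curves, and then invoking Proposition \ref{prop:closedapproximation} together with a standard compactness/averaging argument to produce the invariant measure $\nu$. Concretely, the steps are as follows.

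First, set up the dynamical framework. Since $U$ is bounded and $\mu$ is compactly supported in $U$, the centroid field $\bar v_x$ is uniformly bounded (its norm is dominated by the $\mu_x$-essential sup of $\|v\|$, which is finite $\pi_*\mu$-a.e.\ by compact support — one should be slightly careful here and possibly truncate, or work on the subset where $\|\bar v_x\|\le R$ and let $R\to\infty$). One wants the curves that appear to be genuinely Lipschitz with a uniform constant; the space $\lipschitz(U)$ of such curves $\gamma\colon\R\to U$ with $\lip(\gamma)\le L$ is compact metrizable in the topology of uniform convergence on compacta (Arzelà--Ascoli), and the translation flow $\tau_t$ acts continuously on it.

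Second, produce $\nu$. By Proposition \ref{prop:closedapproximation}, choose closed smooth curves $\gamma_k\colon[a_k,b_k]\to\R^n$ with $\beta_k\mu_{\gamma_k}\to\mu$ weak*. Extend each $\gamma_k$ to a $(b_k-a_k)$-periodic curve on all of $\R$, and form the measure $\nu_k$ on $\lipschitz$ obtained by pushing forward normalized Lebesgue measure on one period $[a_k,b_k]$ under $s\mapsto\tau_s(\gamma_k)$; this $\nu_k$ is automatically $\tau_t$-invariant because of periodicity, and it is a probability measure. The key computation is that for the evaluation functional $\gamma\mapsto \phi(\gamma(0),\gamma'(0))$ (more precisely for $\gamma\mapsto \nabla f(\gamma(0))\cdot\gamma'(0)$ and, for the general $\phi$, for $\gamma\mapsto \psi(\gamma(0))$ with $\psi$ continuous and functionals linear in $\gamma'(0)$), integrating against $\nu_k$ reproduces the integral against $\mu_{\gamma_k}$ up to the normalization; so $\nu_k$ "represents" $\beta_k\mu_{\gamma_k}$. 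Passing to a weak* limit $\nu$ of a subsequence of $\{\nu_k\}$ (possible by compactness of $\lipschitz(U)$, after checking the Lipschitz constants stay bounded — this is where truncation by $R$ and a diagonal argument may be needed), one gets a $\tau_t$-invariant probability measure, and the limiting identity gives \eqref{eq:superposition} first for test functions of the form $\phi(x,v)=\psi(x)(a(x)\cdot v+c(x))$ and $\phi(x,v)=\psi(x)$, then for general measurable $\phi$ by the usual density/monotone-class argument once one knows $\nu$-a.e.\ curve satisfies $\gamma(0)\in\supp\pi_*\mu$ and $\gamma'(0)=\bar v_{\gamma(0)}$, so that $\phi(\gamma(0),\gamma'(0))$ depends on $\gamma$ only through $\gamma(0)$ and the disintegration matches.

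\textbf{The main obstacle.} The delicate point is showing that the limit measure $\nu$ is concentrated on curves $\gamma$ whose velocity at time $0$ is \emph{exactly} the centroid $\bar v_{\gamma(0)}$, rather than merely having the correct projected position-distribution. In other words, one must rule out oscillation in the velocities surviving in the limit. This is precisely where the \emph{closedness} of $\mu$ is used crucially: closedness says $\int \nabla f(x)\cdot v\,d\mu=0$ for all $f$, equivalently (by \eqref{eq:linearityvbar}) $\int\nabla f(x)\cdot\bar v_x\,d\pi_*\mu=0$, i.e.\ the vector measure $\bar v_x\,d\pi_*\mu$ is divergence-free; the Smirnov decomposition theorem then asserts this divergence-free measure decomposes into "elementary solenoids" carried by curves. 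I expect to either (a) cite Smirnov \cite{smirnov1993decomposition} directly for the decomposition of the divergence-free vector measure $\bar v_x\,d\pi_*\mu$ and then repackage it as a $\tau_t$-invariant $\nu$ on $\lipschitz$, or (b) carry out the compactness argument above and, at the final step, verify the velocity-matching by testing against functions $f$ and using that the $\gamma_k$ approximate $\mu$ not just in position but in the position-velocity sense $\mu_{\gamma_k}\to\mu$ on $\R^n\times\R^n$, so that the second-coordinate information is not lost in the limit. Either way, reconciling the "no limits" statement of the theorem with the genuinely limiting construction of $\nu$ — i.e.\ showing $\gamma'(0)=\bar v_{\gamma(0)}$ holds $\nu$-a.e.\ with no error term — is the crux.
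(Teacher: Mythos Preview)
Your route (b) has a genuine gap, and it is exactly the obstacle you flag but do not resolve. The measure $\nu$ you build as a weak* limit of the $\nu_k$ (averages of translates of approximating closed curves from Proposition~\ref{prop:closedapproximation}) will in general represent the \emph{full} measure $\mu$, not the centroidal measure $\int\delta_{(x,\bar v_x)}\,d\pi_*\mu$. A concrete counterexample: let $\mu$ be the normalized measure on $S^1\times\R^2$ whose fibre at each $x\in S^1$ is $\tfrac12(\delta_{\tau_x}+\delta_{-\tau_x})$, with $\tau_x$ the unit tangent. This $\mu$ is closed, $\bar v_x\equiv 0$, and $\mu$ is already $\beta\mu_\gamma$ for the single closed curve $\gamma$ that traverses the circle once clockwise and once counterclockwise. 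Taking $\gamma_k=\gamma$ for all $k$, your $\nu_k\equiv\nu$ is the uniform measure on translates of $\gamma$; for $\nu$-a.e.\ curve $\beta$ one has $\|\beta'(0)\|=1\neq 0=\|\bar v_{\beta(0)}\|$, so \eqref{eq:superposition} fails for $\phi(x,v)=\|v\|^2$. Your proposed fix---testing against functions of the form $\nabla f(x)\cdot v$---cannot help, because such tests are linear in $v$ and hence, by \eqref{eq:linearityvbar}, give the \emph{same} value against $\mu$ and against the centroidal measure; they cannot detect the discrepancy. Route (a), citing Smirnov, is of course correct but is essentially citing the theorem itself.

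The paper's proof avoids this difficulty by a different approximation: rather than invoking Proposition~\ref{prop:closedapproximation} (which the paper in fact presents as a \emph{consequence} of Theorem~\ref{thm:superposition}), it mollifies $\mu$ to obtain smooth closed measures $\psi_r*\mu$ with smooth centroid fields $\bar v^{\psi_r*\mu}$, and then takes $\nu_r$ to be supported on the \emph{flow lines} of $\bar v^{\psi_r*\mu}$ (the smooth case, where closedness translates into $\operatorname{div}(\rho X)=0$ and hence $\rho$ is flow-invariant). The point is that these approximating curves satisfy $\gamma'(0)=\bar v^{\psi_r*\mu}_{\gamma(0)}$ \emph{by construction}, so the velocity-matching is built in before the limit and survives it; no collapse needs to be proved. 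This is what your approach is missing: the approximating curves must already follow the centroid field, and Proposition~\ref{prop:closedapproximation} gives no such control.
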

\noindent 
Curves lying in $\supp \nu$ have an appealing property:
\begin{coro}[Centroid representation]\label{cor:superpositionvx}
With the notation of the previous theorem, we have for $\nu$ almost all $\gamma$ in $\lipschitz$:
$$ \gamma'(t)=\bar v_{\gamma(t)}$$
for almost all $t$.
\end{coro}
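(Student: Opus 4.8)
The plan is to apply the Young superposition principle (Theorem \ref{thm:superposition}) to a well-chosen family of test functions $\phi$ that isolate the discrepancy $\gamma'(0)-\bar v_{\gamma(0)}$, and then use the $\tau_t$-invariance of $\nu$ to upgrade a statement about $t=0$ to a statement for almost all $t$. First I would observe that for $\nu$-almost all $\gamma$ the curve $\gamma$ is Lipschitz, hence differentiable almost everywhere, so the expression $\gamma'(t)$ makes sense for a.e.\ $t$; the content of the corollary is that this derivative equals the centroid field evaluated along the curve.

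The key computation is to choose, for each coordinate $j\in\{1,\dots,n\}$ and each smooth compactly-supported $h\colon\R^n\to\R$, the test function $\phi(x,v)=h(x)\,v_j$, which is linear in $v$. Plugging this into \eqref{eq:superposition} and using the definition of the centroid field together with \eqref{eq:linearityvbar}, the left-hand side is $\int_{\R^n} h(x)\,(\bar v_x)_j\,d(\pi_*\mu)(x)$. On the other hand, applying \eqref{eq:superposition} with $\phi(x,v)=h(x)v_j$ directly gives $\int_\lipschitz h(\gamma(0))\,\gamma'(0)_j\,d\nu(\gamma)$. The next step, and this is where the $\tau_t$-invariance enters, is to note that $\pi_*\mu$ is the pushforward under $\gamma\mapsto\gamma(0)$ of $\nu$ — this follows from \eqref{eq:superposition} applied to functions $\phi$ depending on $x$ only — so that the left-hand side also equals $\int_\lipschitz h(\gamma(0))\,\bar v_{\gamma(0)}\,d\nu(\gamma)$. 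Comparing the two expressions, for every such $h$ and every $j$ we get $\int_\lipschitz h(\gamma(0))\bigl(\gamma'(0)_j-\bar v_{\gamma(0),j}\bigr)\,d\nu(\gamma)=0$. Since $h$ ranges over a dense family and $\gamma\mapsto\gamma(0)$ pushes $\nu$ to a Radon measure, this forces $\gamma'(0)=\bar v_{\gamma(0)}$ for $\nu$-almost all $\gamma$.

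To pass from $t=0$ to almost every $t$, I would invoke the translation-invariance of $\nu$: for each fixed rational $t$, applying the previous paragraph to the pushforward $(\tau_t)_*\nu=\nu$ shows $\gamma'(t)=\bar v_{\gamma(t)}$ for $\nu$-almost all $\gamma$; taking the countable intersection over $t\in\Q$ gives a full-measure set of curves for which the identity holds on all of $\Q$. Finally a Fubini argument on $\lipschitz\times\R$ (with respect to $\nu\otimes\lebesgue$ restricted to a large time interval, using $\tau_t$-invariance) promotes this to: for $\nu$-almost all $\gamma$, the identity $\gamma'(t)=\bar v_{\gamma(t)}$ holds for Lebesgue-almost all $t$.

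I expect the main obstacle to be the measurability and integrability bookkeeping: one must check that $\gamma\mapsto\gamma'(0)$ is $\nu$-measurable (it is only defined $\nu$-a.e., since curves in $\lipschitz$ need only be Lipschitz, not $C^1$) and that the Fubini step interchanging the $\nu$-integral with the $t$-integral is legitimate — this is where $\tau_t$-invariance of $\nu$ and the uniform Lipschitz bound on curves supported in the bounded set $U$ (giving a uniform bound on $\|\gamma'\|$) are essential. Everything else is a routine consequence of \eqref{eq:superposition} and \eqref{eq:linearityvbar}.
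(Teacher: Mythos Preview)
Your argument has a gap at the step where you conclude ``this forces $\gamma'(0)=\bar v_{\gamma(0)}$ for $\nu$-almost all $\gamma$.'' From
\[
\int_\lipschitz h(\gamma(0))\bigl(\gamma'(0)_j-(\bar v_{\gamma(0)})_j\bigr)\,d\nu(\gamma)=0\quad\text{for all }h\in C^\infty_c(\R^n),
\]
you only obtain that the conditional expectation of $\gamma'(0)-\bar v_{\gamma(0)}$ given $\gamma(0)$ vanishes, not that the quantity itself vanishes $\nu$-a.e. The test functions $\gamma\mapsto h(\gamma(0))$ see only the initial point and cannot separate two curves with the same $\gamma(0)$ but different $\gamma'(0)$; if $\nu$ were split evenly between two such curves whose derivatives at $0$ average to $\bar v_{\gamma(0)}$, your integral identity would still hold while the desired conclusion fails. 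Such a $\nu$ cannot in fact satisfy \eqref{eq:superposition} for \emph{all} measurable $\phi$, but your linear-in-$v$ test functions are too coarse to detect this.

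The fix, which is exactly what the paper does, is to abandon the linear test functions and apply \eqref{eq:superposition} directly to a single nonnegative measurable $\phi$ vanishing precisely on the graph $\{(x,\bar v_x):x\in\R^n\}$, for instance $\phi(x,v)=\|v-\bar v_x\|^2$. The left side of \eqref{eq:superposition} is then identically zero, hence so is the right side, and nonnegativity of $\phi$ forces $(\gamma(0),\gamma'(0))$ onto the graph for $\nu$-almost every $\gamma$. Your passage from $t=0$ to almost every $t$ via $\tau_t$-invariance and Fubini is correct and matches the paper's one-line invocation of the same invariance.
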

\begin{proof}  Take indeed $\phi\geq 0$ vanishing only on the measureable set consisting of points of the form $(x,\bar v_{x})$, $x\in\R^n$. Then both sides of \eqref{eq:superposition} must vanish, which means that for $\nu$-almost all $\gamma$, the point $(\gamma(0),\gamma'(0))$ must be of the form $(x,\bar v_{x})$. The conclusion follows from the $\tau_t$-invariance of the measure $\nu$.
\end{proof}

As an example, take the case in which $\mu$ is the closed measure \[\mu=\frac1{2\pi}(\beta,\beta')_*\lebesgue_{[0,2\pi)}\] 
on $\R^2\times\R^2$ for
\[\beta(t)=(\cos t,\sin t).\]
In this simple example, the centroid coincides with the derivative, $\bar v_{\beta(t)}=\beta'(t)$.
Each time-translate $\tau_t(\beta)$ is still a parameterization of the circle, and the probability measure $\nu$ we obtain in Theorem \ref{thm:superposition} is 
\[\nu=\frac1{2\pi}\int_0^{2\pi} \delta_{\tau_t(\beta)}dt,\]
where $\delta_\gamma$ is the Dirac delta function whose mass is concentrated at the curve $\gamma$ in the space $\lipschitz$.

The measure $\nu$ in Theorem \ref{thm:superposition} can be understood as a decomposition of the closed measure $\mu$ into a convex superposition of measures induced by Lipschitz curves. Although at first sight each $\gamma$ on the right-hand side of \eqref{eq:superposition} only participates at $t=0$, the $\tau_t$-invariance of $\nu$ means that in fact the entire curve $\gamma$ is involved in the integral through its time translates $\tau_t\gamma$. 
Observe that another consequence of the $\tau_t$-invariance is that the integral in the right-hand side of \eqref{eq:superposition} satisfies, for all $t\in\R$,
\begin{align}
 \notag\int_{\lipschitz}\phi(\gamma(0),\gamma'(0))\,d\nu(\gamma)& = \int_{\lipschitz}\phi(\gamma(t),\gamma'(t))\,d\nu(\gamma)\\
 \label{eq:timetranslationinv}& =  \frac{1}{|I|}\int_I\int_{\lipschitz}\phi(\gamma(t),\gamma'(t))\,d\nu(\gamma)\, dt\\ \notag
 & = \frac{1}{|I|}\int_{\lipschitz}\int_I\phi(\gamma(t),\gamma'(t))\,dt\,d\nu(\gamma). \notag
\end{align}
where $I$ is any nontrivial interval. Thus  \eqref{eq:superposition} has the more explicit lamination or superposition form:
\begin{equation}\label{eq:superposition2}
  \int_{\R^n} \phi(x,\bar v_x)\,d(\pi_*\mu)(x)=\frac{1}{|I|}\int_{\lipschitz}\int_I\phi(\gamma(t),\gamma'(t))\,dt\,d\nu(\gamma) 
 \end{equation}
 for any interval $I$ with nonempty interior.
 \smallskip

Although the left-hand side of \eqref{eq:superposition} does not involve the full measure $\mu$, it will turn out to be similar enough: if the integrand $\phi\colon\R^n\times\R^n\to\R$ is linear in the second variable $v$, we still have \eqref{eq:linearityvbar} %
and this will be enough for the applications we have in mind.

We remark that the measure $\nu$ in Theorem \ref{thm:superposition} is not unique in general. For example, if $\gamma$ is a closed curve intersecting itself once so as to form the figure 8, then the measure $\nu$ decomposing $\mu=\mu_\gamma$ could be taken to be supported on all the $\tau_t$-translates of $\gamma$ itself, or it could be taken to be supported on the curves traversing each of the loops of the 8.

\paragraph{Circulation for a subdifferential field.}

We provide here some results related to subdifferentials, and that will be useful to the study of the vanishing step subgradient method.
\begin{lem}\label{lem:centroidclarke} Let $f$ be a locally Lipschitz continuous function and $\mu$ a closed measure with desintegration $(\pi_*\mu,\mu_x)$ and centroid field  $\bar v_x$. 
 If for some $a\in\R$ and some $x\in\R^n$ we have $a\supp\mu_x\subset \partial^cf(x)$, then $a\bar v_x\in\partial^cf(x)$. 
\end{lem}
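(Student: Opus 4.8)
The plan is to observe that $a\bar v_x$ is nothing but the barycenter of a probability measure whose support lies inside $\partial^c f(x)$, and then to use that the Clarke subdifferential is closed and convex.

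First I would recall from Definition \ref{def:centroid} that $\bar v_x=\int_{\R^n}v\,d\mu_x(v)$, where $\mu_x$ is a \emph{probability} measure furnished by the disintegration \eqref{eq:desintegration}. Since $\mu$ is compactly supported, for $\pi_*\mu$-almost every $x$ one may take a version of the disintegration for which $\mu_x$ is carried by the compact slice $\{v:(x,v)\in\supp\mu\}$; in particular $\supp\mu_x$ is bounded, so the integral defining $\bar v_x$ converges absolutely and $\bar v_x$ is well defined. Pushing $\mu_x$ forward under the dilation $m_a\colon v\mapsto av$ produces a probability measure $\nu:=(m_a)_*\mu_x$ supported in $m_a(\supp\mu_x)=a\,\supp\mu_x\subseteq\partial^c f(x)$ (the case $a=0$ being trivial, as then the hypothesis already gives $0\in\partial^c f(x)=a\bar v_x+\{0\}$), and whose barycenter is exactly $\int w\,d\nu(w)=\int av\,d\mu_x(v)=a\bar v_x$.

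Then I would invoke the elementary separation fact that a compactly supported probability measure has its barycenter in the closed convex hull of its support: if $a\bar v_x$ were not in $\partial^c f(x)$, then since $\partial^c f(x)$ is closed and convex the Hahn--Banach separation theorem would give $\xi\in\R^n$ and $c\in\R$ with $\xi\cdot(a\bar v_x)>c\geq\xi\cdot w$ for all $w\in\partial^c f(x)\supseteq\supp\nu$; integrating the last inequality against $\nu$ would yield $\xi\cdot(a\bar v_x)=\int\xi\cdot w\,d\nu(w)\leq c$, a contradiction. Hence $a\bar v_x\in\partial^c f(x)$, as claimed.

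I do not expect a genuine obstacle: this is a direct consequence of the convexity of the subdifferential and the definition of the centroid, and the closedness of $\mu$ as a measure is not used at all (it is recorded in the hypothesis only because this is the setting of later applications). The sole point deserving a word of care is the finiteness of the integral defining $\bar v_x$, which is guaranteed by the compact support of $\mu$ — alternatively one could work with $\partial^c f(x)\cap\overline{B}$ for a large closed ball $\overline B$, which is compact by the local Lipschitz continuity of $f$. One should also bear in mind that, as usual, the disintegration $\mu_x$ and the centroid field $\bar v_x$ are only defined for $\pi_*\mu$-almost every $x$, which is the sense in which the statement is to be read.
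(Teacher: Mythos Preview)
Your proof is correct and follows essentially the same idea as the paper's: both exploit that $\partial^c f(x)$ is closed and convex to conclude that the barycenter $a\bar v_x$ of the probability measure $(m_a)_*\mu_x$ lies in it. The paper phrases this via Jensen's inequality applied to the convex function $g(v)=\dist(av,\partial^c f(x))$, whereas you invoke Hahn--Banach separation; these are interchangeable one-line justifications of the same elementary fact, and your observation that closedness of $\mu$ plays no role is accurate.
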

\begin{proof}
 Assume $a\supp\mu_x\subset \partial^cf(x)$.
 Let $g(v)=\dist(av,\partial^cf(x))$, so that $g(v)=0$ for all $v\in\supp\mu_x$. Since $\partial^cf(x)$ is a convex set, $g$ is a convex function. Then by Jensen's inequality we have
 \[g(\bar v_x)=g\left(\int_{\R^n} v\,d\mu_x(v)\right)\leq \int_{\R^n} g(v)\,d\mu_x(v)=0.\qedhere\]
\end{proof}

\begin{prop}[Circulation of subdifferential for path differentiable functions]\label{prop:welldefined}
 If $f\colon\R^n\to\R$ is a path differentiable function and $\mu$ is a closed probability measure, then for each open set $U\subset\R^n$ and each measurable function $\sigma\colon U\to\R^n$ with $\sigma(x)\in\partial^cf(x)$ for $x\in U$, the integral
 \[\int_{U\times\R^n}\sigma(x)\cdot v\,d\mu(x,v)\]
 is well defined, and its value is independent of the choice of $\sigma$. We define the symbol
 \[\int_{U\times \R^n}\partial^cf(x)\cdot v\,d\mu(x,v)\]
 to be equal to this value. If $\pi(\supp\mu)\subset U$, 
  \[\int_{U\times\R^n}\partial^cf(x)\cdot v\,d\mu(x,v)=0.\]
\end{prop}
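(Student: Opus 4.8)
The plan is to use the Young superposition principle (Theorem \ref{thm:superposition}) to reduce the integral over the closed measure to an integral over Lipschitz curves, where the path differentiability hypothesis applies directly. First I would address the well-definedness and $\sigma$-independence. Since $\sigma(x)\cdot v$ is linear in $v$, formula \eqref{eq:linearityvbar} gives
\[
\int_{U\times\R^n}\sigma(x)\cdot v\,d\mu(x,v)=\int_{U}\sigma(x)\cdot\bar v_x\,d(\pi_*\mu)(x).
\]
By Lemma \ref{lem:centroidclarke} applied pointwise (with $a=1$, using that $\supp\mu_x\subset\partial^cf(x)$ is not automatic — see below), the relevant quantity is $\sigma(x)\cdot\bar v_x$; to see this does not depend on $\sigma$, I would invoke Corollary \ref{cor:superpositionvx}: for $\nu$-a.e.\ curve $\gamma$, $\gamma'(t)=\bar v_{\gamma(t)}$ a.e. Then by path differentiability, for a.e.\ $t$ the value $v\cdot\gamma'(t)=(f\circ\gamma)'(t)$ is the \emph{same} for every $v\in\partial^cf(\gamma(t))$, in particular for $v=\sigma(\gamma(t))$ and any other selection. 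Pushing this back through \eqref{eq:superposition2} and \eqref{eq:linearityvbar} shows the integral equals $\int_\lipschitz\int_I(f\circ\gamma)'(t)\,dt\,d\nu(\gamma)/|I|$, a quantity manifestly independent of $\sigma$. This legitimizes the notation $\int_{U\times\R^n}\partial^cf(x)\cdot v\,d\mu(x,v)$.

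For the vanishing statement when $\pi(\supp\mu)\subset U$: here $\mu$ is a closed probability measure supported (in the base) inside $U$, so Theorem \ref{thm:superposition} provides a $\tau_t$-invariant probability $\nu$ on $\lipschitz(U)$ with \eqref{eq:superposition2}. Taking $\phi(x,v)=\sigma(x)\cdot v$ (extending $\sigma$ measurably, which is harmless since only values on $\pi(\supp\mu)\subset U$ matter), I get
\[
\int_{U\times\R^n}\partial^cf(x)\cdot v\,d\mu(x,v)=\frac{1}{|I|}\int_{\lipschitz}\int_I\sigma(\gamma(t))\cdot\gamma'(t)\,dt\,d\nu(\gamma).
\]
Now for $\nu$-a.e.\ $\gamma$, path differentiability gives $\sigma(\gamma(t))\cdot\gamma'(t)=(f\circ\gamma)'(t)$ for a.e.\ $t$, so the inner integral over $I=[c,d]$ telescopes to $f(\gamma(d))-f(\gamma(c))$. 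Using the $\tau_t$-invariance of $\nu$ exactly as in the derivation of \eqref{eq:timetranslationinv}, I average over a family of shifted intervals: $\int_\lipschitz\big(f(\gamma(d))-f(\gamma(c))\big)\,d\nu(\gamma)=\int_\lipschitz\big(f(\tau_s\gamma(d))-f(\tau_s\gamma(c))\big)\,d\nu(\gamma)$ for every $s$, and averaging $s$ over an interval of length $L$ shows this common value is bounded by $\tfrac{2}{L}\sup_{\supp\pi_*\mu}|f|\cdot\|\gamma\|_{\text{stuff}}$-type bounds — more cleanly, it equals $\tfrac1L\int_0^L\int_\lipschitz\big(f(\gamma(d+s))-f(\gamma(c+s))\big)d\nu\,ds$, and since $f\circ\gamma$ is bounded (as $\gamma$ stays in bounded $U$ and $f$ is continuous), letting $L\to\infty$ forces the value to $0$. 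Hence the integral vanishes.

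The main obstacle I anticipate is a technical mismatch: Theorem \ref{thm:superposition} and \eqref{eq:linearityvbar} naturally produce the \emph{centroid} $\bar v_x$, not the full conditional $\mu_x$, so to apply Lemma \ref{lem:centroidclarke} I need $\supp\mu_x\subset\partial^cf(x)$ for $\pi_*\mu$-a.e.\ $x$ — but this is \emph{not} part of the hypothesis of Proposition \ref{prop:welldefined} (it holds only for the specific measures built from subgradient sequences later). The resolution is that I should \emph{not} route through Lemma \ref{lem:centroidclarke} at all here; instead, the argument must work with an arbitrary selection $\sigma$, and the key point is simply that for $\nu$-a.e.\ curve, $\gamma'(t)=\bar v_{\gamma(t)}$ and path differentiability equates $\sigma(\gamma(t))\cdot\gamma'(t)$ with $(f\circ\gamma)'(t)$ regardless of $\sigma$. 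Care is also needed that $\sigma$ is only defined on $U$ while $\gamma$ ranges over $\lipschitz(U)$ — fine, since $\supp\nu\subset\lipschitz(U)$ — and that the telescoping $\int_I(f\circ\gamma)'=f(\gamma(d))-f(\gamma(c))$ requires $f\circ\gamma$ absolutely continuous, which follows from $f$ locally Lipschitz and $\gamma$ Lipschitz. Finally one must check measurability of $\gamma\mapsto\int_I\sigma(\gamma(t))\cdot\gamma'(t)\,dt$ to apply Fubini in \eqref{eq:superposition2}; this follows from the measurability built into the disintegration and superposition constructions.
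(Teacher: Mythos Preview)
Your argument is correct. For the independence of $\sigma$ you do exactly what the paper does: pass to the superposition measure $\nu$ via Theorem~\ref{thm:superposition}, and use that path differentiability forces $\sigma_1(\gamma(t))\cdot\gamma'(t)=\sigma_2(\gamma(t))\cdot\gamma'(t)$ for a.e.\ $t$ along $\nu$-a.e.\ curve (the paper writes the difference $\sigma_1-\sigma_2$ directly, you instead identify both with $(f\circ\gamma)'$; same content). Your detour through Lemma~\ref{lem:centroidclarke} is, as you yourself diagnose, a red herring and correctly discarded. One small omission: when $U$ does not contain all of $\pi(\supp\mu)$ you need to carry the cutoff $\chi_U$ through the superposition formula, as the paper does; this is routine.

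For the vanishing statement your route genuinely differs from the paper's. The paper mollifies: it writes $\int\partial^cf\,d\mu=\int_\lipschitz(f\circ\beta)'(0)\,d\nu$, replaces $f$ by $\psi_r*f$, and then invokes the \emph{definition} of closedness of $\mu$ against the smooth function $\psi_r*f$, letting $r\searrow0$. You instead telescope $\int_I(f\circ\gamma)'=f(\gamma(d))-f(\gamma(c))$ and exploit the $\tau_t$-invariance of $\nu$ by taking $|I|\to\infty$; since the curves stay in a bounded set where $f$ is bounded, the $1/|I|$ prefactor kills the integral. Your argument is slightly more elementary (no mollifiers, no interchange of limit and integral), and in fact your averaging step can be shortened: simply take $I=[0,L]$ in \eqref{eq:superposition2} directly, giving $\tfrac{1}{L}\int_\lipschitz(f(\gamma(L))-f(\gamma(0)))\,d\nu\to0$. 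The paper's approach has the conceptual advantage of tying the conclusion visibly back to Definition~\ref{def:closedmeasure}, while yours leans entirely on the $\tau_t$-invariance already packaged into Theorem~\ref{thm:superposition}.
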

\begin{proof}
 Let $\sigma_1,\sigma_2\colon U\times\R^n\to\R$ be two measurable functions such that $\sigma_i(x)\in\partial^cf(x)$ for each $x\in U$. From Theorem \ref{thm:superposition} we get a $\tau_t$-invariant, Borel probability measure $\nu$ on the space $\lipschitz$ of Lipschitz curves. Then
 \begin{align*}
  \int_{U\times\R^n}&\sigma_1(x)\cdot v\,d\mu(x,v)-\int_{U\times\R^n}\sigma_2(x)\cdot v\,d\mu(x,v)\\
  &=\int_{\R^n\times\R^n}\chi_U(x)(\sigma_1(x)-\sigma_2(x))\cdot v\,d\mu(x,v)\\
  &=\int_{\lipschitz}\chi_U(\gamma(0))(\sigma_1(\gamma(0))-\sigma_2(\gamma(0)))\cdot \gamma'(0)\,d\nu(\gamma).
 \end{align*}
 Since $f$ is path differentiable, for each $\gamma\in\lipschitz$ and for almost every $t\in\R$ with $\gamma(t)\in U$,
 \[\sigma_1(\gamma(t))\cdot \gamma'(t)=\sigma_2(\gamma(t))\cdot \gamma'(t).\]
 From the $\tau_t$-invariance of $\nu$ it follows then that the integrand above vanishes $\nu$-almost everywhere.
 
 Let us now analyze the case in which $\pi(\supp\mu)\subset U$.
  Let $\psi\colon\R^n\to\R$ be a mollifier, that is, a compactly-supported, nonnegative, rotationally-invariant, $C^\infty$ function such that $\int_{\R^n}\psi=1$, and let $\psi_r(x)=r^{-n}\psi(x/r)$ for $r>0$, so that $\psi_r$ tends to the Dirac delta at 0 as $r\to0$. Denote by $\psi_r*f$ the convolution of $\psi_r$ and $f$. 
 Observe that if $\beta\in \lipschitz$ and $a<b$, then 
 \begin{multline*}
  \int_a^b(f\circ \beta)'(t)\,dt=f\circ\beta(b)-f\circ\beta(a)
  \\=\lim_{r\searrow 0}\left[(\psi_r*f)\circ\beta(b)-(\psi_r*f)\circ\beta(a)\right]
  =\lim_{r\searrow 0}\int_a^b((\psi_r*f)\circ\beta)'(t)\,dt.
 \end{multline*}
 This justifies the following calculation:
 \begin{align*}
  \int_{T\R^n}\partial^cf\,d\mu&=\int_{\lipschitz}(f\circ\beta)'(0)\,d\nu(\beta)=\\
  &=\lim_{r\searrow0}\int_{\lipschitz}((\psi_r*f)\circ\beta)'(0)\,d\nu(\beta) \\
  &=\lim_{r\searrow0}\int_{\lipschitz}\nabla(\psi_r*f)(\beta(0))\cdot\beta'(0)\,d\nu(\beta) \\
  &=\lim_{r\searrow0}\int_{T\R^n}\nabla(\psi_r*f)(x)\cdot v\,d\mu(x,v),
 \end{align*}
 which vanishes because $\mu$ is closed and $\psi_r*f$ is $C^\infty$.
\end{proof}

\subsection{Interpolant curves of subgradient sequences and their limit measures}
\label{sec:interpolatingcurve}

In this section we fix $f\colon\R^n\to\R$ to be a locally Lipschitz function, and let $\{x_i\}_i$ be a bounded sequence generated by the subgradient method.

\begin{defn}[Subgradient sequence interpolants]\label{def:interpolatingcurve}
 Given a sequence $\{x_i\}_{i\in\N}$ generated by the subgradient method, with the same notations as in Definition \ref{def:subgradient},
 its \emph{interpolating curve} is the curve $\gamma\colon\R_{\geq 0}\to\R^n$ with $\gamma(t_i)=x_i$ for $t_i=\sum_{j=0}^{i}\varepsilon_i$ and $\gamma'(t)=v_i$ for $t_i<t<t_{i+1}$. This curve corresponds to a continuous-time piecewise-affine interpolation of the sequence.%
\end{defn}

\medskip

For a bounded set $B\subset\R_{\geq0}$, we define a measure on $\R^n\times\R^n$ by
\[\meas{\gamma}{B}=\frac{1}{|B|}(\gamma,\gamma')_*\lebesgue_{B},\]
where $|B|=\int_B 1\,dt$ is the length of $B$, and $\lebesgue_B$ is the Lebesgue measure on $B$. 
If $\phi\colon\R^n\times\R^n\to\R$ is measurable, then
\[\int_{\R^n\times\R^n}\phi\,d\meas{\gamma}{B}=\frac1{|B|}\int_B\phi(\gamma(t),\gamma'(t))\,dt.\]

\begin{lem}[Limiting closed measures associated to  subgradient sequences]\label{lem:longintervals}
 Let $\gamma$ be the interpolating curve (as in Definition \ref{def:interpolatingcurve}) and $A=\{I_i\}_{i\in\N}$ be a collection of intervals $I_i\subset \R$, with disjoint interior, such that $|I_i|\to+\infty$ as $i\to+\infty$.
 Set $B_N=\cup_{i=0}^NI_i$.
 Then the set of weak* limit points of the sequence $\{\meas{\gamma}{B_N}\}_N$ is nonempty, and its elements are closed probability measures.

\end{lem}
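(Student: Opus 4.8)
The plan is to first secure the existence of weak* limit points by a compactness argument based on the boundedness of the sequence, and then to show that any such limit is a closed measure via a direct telescoping estimate that exploits the hypothesis $|I_i|\to+\infty$.

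First I would fix $R>0$ with $\{x_i\}_i\subset\bar B(0,R)$; then the interpolating curve $\gamma$ stays in $\bar B(0,R)$, and since $f$ is locally Lipschitz the Clarke subdifferential $\partial^cf$ is bounded on $\bar B(0,R)$ by some $L>0$, so $\|\gamma'(t)\|\leq L$ for almost every $t$. Hence every $\meas{\gamma}{B_N}$ is a probability measure supported in the fixed compact set $K:=\bar B(0,R)\times\bar B(0,L)\subset\R^n\times\R^n$. Regarding these as probability measures on $K$ (a compact metric space), weak* sequential compactness of the space of probability measures on $K$ produces a subsequence $\{\meas{\gamma}{B_{N_k}}\}_k$ converging weak* to a probability measure $\mu$ on $K$; extending $\mu$ by zero gives a compactly supported positive Radon probability measure on $\R^n\times\R^n$ that is a weak* limit point of $\{\meas{\gamma}{B_N}\}_N$. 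This shows the set of limit points is nonempty and that its elements are compactly supported probability measures; no mass escapes precisely because all the measures sit inside the single compact set $K$.

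Next I would verify that such a $\mu$ is closed. Fix $g\in C^\infty(\R^n)$. For each $N$,
\[\int_{\R^n\times\R^n}\nabla g(x)\cdot v\,d\meas{\gamma}{B_N}(x,v)=\frac1{|B_N|}\int_{B_N}\nabla g(\gamma(t))\cdot\gamma'(t)\,dt=\frac1{|B_N|}\int_{B_N}(g\circ\gamma)'(t)\,dt,\]
the last equality using that $g\circ\gamma$ is Lipschitz, hence absolutely continuous, on each interval. Writing $I_i=[a_i,b_i]$ and using that the $I_i$ have pairwise disjoint interiors, the integral over $B_N=\bigcup_{i=0}^N I_i$ splits and telescopes on each piece to $g(\gamma(b_i))-g(\gamma(a_i))$, which has absolute value at most $M:=2\sup_{\bar B(0,R)}|g|$ independently of $i$. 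Therefore
\[\left|\int_{\R^n\times\R^n}\nabla g(x)\cdot v\,d\meas{\gamma}{B_N}(x,v)\right|\leq\frac{(N+1)\,M}{|B_N|}=\frac{M}{\frac1{N+1}\sum_{i=0}^N|I_i|},\]
and since $|I_i|\to+\infty$ its Cesàro average tends to $+\infty$, so the right-hand side tends to $0$. Because $(x,v)\mapsto\nabla g(x)\cdot v$ is continuous and bounded on $K$, it is an admissible test function against which weak* convergence may be applied (if desired, multiply it by a cutoff equal to $1$ on $K$, which changes none of the integrals since all measures live in $K$); passing to the limit along $\{N_k\}_k$ yields $\int_{\R^n\times\R^n}\nabla g(x)\cdot v\,d\mu(x,v)=0$. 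As $g\in C^\infty(\R^n)$ was arbitrary, $\mu$ is closed.

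The argument is mostly routine; the one step deserving genuine care is the limit $\frac{N+1}{|B_N|}\to0$, and this is exactly where the assumption $|I_i|\to+\infty$ is indispensable: for intervals of bounded length this ratio stays bounded away from $0$ and the statement genuinely fails (a single bounded interval, for instance, produces the measure of a non-closed curve segment). The only other point to monitor is that all the measures are supported in one common compact set, which simultaneously guarantees that the limit retains full mass $1$ and that $\nabla g(x)\cdot v$ is a legitimate test function.
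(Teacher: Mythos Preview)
Your proof is correct and reaches the same conclusion, but by a slightly different route than the paper. The paper closes each curve segment $\gamma|_{I_i}$ by adjoining a unit-speed straight segment $\alpha_i$ from $\gamma(b_i)$ back to $\gamma(a_i)$, so that $\meas{\gamma}{B_N}+\frac{1}{|B_N|}\sum_{i=0}^N\nu_i$ is an exact closed measure for every $N$, and then argues that the auxiliary contribution $\frac{1}{|B_N|}\sum_i\nu_i$ tends to zero weak* because the total length of the $\alpha_i$ is at most $(N{+}1)\diam K$ while $|B_N|$ grows faster. You bypass the auxiliary construction entirely and estimate the closedness defect $\int\nabla g(x)\cdot v\,d\meas{\gamma}{B_N}$ directly via the fundamental theorem of calculus on each $I_i$, bounding it by $(N{+}1)M/|B_N|$. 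The underlying estimate is identical in spirit---both hinge on $(N{+}1)/|B_N|\to 0$, i.e., on the Ces\`aro divergence of $|I_i|$---but your version is more elementary and avoids introducing the closing segments. The paper's approach, on the other hand, has the conceptual benefit of exhibiting each $\meas{\gamma}{B_N}$ as a small perturbation of a genuinely closed measure, which makes the passage to the limit a one-line appeal to the weak* closedness of the set of closed measures.
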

\begin{proof}
 Let $\phi\in C^0(\R^n\times\R^n)$. For $i\in \N$, write $I_i=[t_1^i,t_2^i]$ and $d_i=\|\gamma(t_1^i)-\gamma(t_2^i)\|$, and let $\alpha_i\colon[0,d_i]\to\R^n$ be the segment joining $\gamma(t_2^i)$ to $\gamma(t_1^i)$ with unit speed. Also, let 
 \[\nu_i=(\alpha_i,\alpha'_i)_*\lebesgue_{[0,d_i]}\] 
 be the measure on $\R^n\times\R^n$ encoding $\alpha_i$. 
 Let $K\subset \R^n\times\R^n$ be a convex, compact set that contains the image of $(\gamma,\gamma')$ and $(\alpha_i,\alpha_i')$ for all $i$, so that $d_i\leq \diam K$. Estimate
 \begin{align*}
  \left|\frac{\sum_{i=0}^N\int_{\R^n\times\R^n} \phi\,d\nu_i}{|B_N|}\right|&=\left|\frac{\sum_{i=0}^N\int_{0}^{d_i} \phi(\alpha_i(t),\alpha_i'(t))\,dt}{\sum_{i=0}^N|I_i|}\right|\\
  &\leq \frac{N(\diam K)\sup_{(x,v)\in K}|\phi(x,v)|}{\sum_{i=0}^N|I_i|}\to 0
 \end{align*}
 since $|I_i|\to+\infty$. Thus the measures in the accumulation sets of the sequences $\{\meas{\gamma}{B_N}\}_N$ and 
 \begin{equation}\label{eq:measseq}\left\{\meas{\gamma}{B_N}+\frac{\sum_{i=0}^N\nu_i}{|B_N|}\right\}_N\end{equation}
 coincide. The measures in the latter sequence are all closed since, for all $\varphi\in C^\infty(\R^n)$, we have, by the fundamental theorem of calculus,
 \begin{align*}
  \int_{t^i_1}^{t^i_2} \nabla\varphi(\gamma(t))\cdot&\gamma'(t)\,dt+\int_0^{d_i}\nabla\varphi(\alpha(t))\cdot\alpha'(t)\,dt\\
  &=\int_{t_1^i}^{t_2^i} (\varphi\circ\gamma)'(t)\, dt + \int_0^{d_i} (\varphi\circ\alpha)'(t)\,dt
  \\
  &=[\varphi(\gamma(t^i_2))-\varphi(\gamma(t^i_1))]+[\varphi(\alpha(d_i))-\varphi(\alpha_i(0))]
  \\
  &=[\varphi(\gamma(t^i_2))-\varphi(\gamma(t^i_1))]+[\varphi(\gamma(t^i_1))-\varphi(\gamma(t^i_2))]=0,
 \end{align*}
 and the measures in the sequence \eqref{eq:measseq} are sums of multiples of these.
 
 By Prokhorov's theorem \cite{prokhorov}, the set of probability measures on $K$ is compact, so the set of limit points is nonempty.
 The set of closed measures is itself closed, as it is defined by a {weak* closed condition.
 Thus the limit points must also be closed measures.
}\end{proof}

\begin{lem}[Limit points and limiting measure supports]\label{lem:essaccsupp}
 Let $\gamma$ be the interpolating curve as in Definition \ref{def:interpolatingcurve}.
 Consider the set $\acc\{\meas{\gamma}{[0,N]}\}_N$ of limit points of the sequence $\{\meas{\gamma}{[0,N]}\}_N$ in the weak* topology. We have
 \[\overline{\bigcup_{\mu\in \acc\{\meas{\gamma}{[0,N]}\}_N}\pi(\supp \mu)}=\essacc\{x_i\}_i.\]
\end{lem}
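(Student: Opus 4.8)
\emph{Plan.} I would prove the two inclusions of the claimed identity separately, both via the elementary observation that the $\meas{\gamma}{[0,N]}$-average of a continuous test function evaluated at positions equals, up to a $o(1)$ error, the $\varepsilon_i$-weighted frequency of that function along the iterates. To set this up, first note that boundedness of $\{x_i\}_i$ together with local boundedness of $\partial^cf$ places the graph of $(\gamma,\gamma')$ inside a fixed convex compact $K\subset\R^n\times\R^n$, so every $\meas{\gamma}{[0,N]}$ is a probability measure on $K$ and, by Prokhorov's theorem, $\acc\{\meas{\gamma}{[0,N]}\}_N\neq\emptyset$. (I take $N$ real; letting $N$ range over $\N$ gives the same set of weak* limit points, since interval restrictions whose lengths differ by $O(1)$ differ by $O(1/N)$ in total variation.) Then, for a continuous $\psi\colon\R^n\to[0,1]$ with compact support and with $t_M\leq N<t_{M+1}$, using $\gamma'\equiv v_i$ on $(t_i,t_{i+1})$, the uniform continuity of $\psi$, the bound $\|\gamma(t)-x_i\|\leq\varepsilon_i\|v_i\|\to0$ on $[t_i,t_{i+1}]$, and a Cesàro/Toeplitz argument (since $\sum_i\varepsilon_i=+\infty$ and the per-step error $\omega_\psi(\varepsilon_i\|v_i\|)\to0$, its $\varepsilon_i$-weighted average vanishes; also $t_M/N\to1$ and $\varepsilon_M\to0$), one obtains
\[\int_{\R^n\times\R^n}\psi(\pi(x,v))\,d\meas{\gamma}{[0,N]}(x,v)=\frac1N\int_0^N\psi(\gamma(t))\,dt=\frac1{t_M}\sum_{i=0}^{M-1}\varepsilon_i\,\psi(x_i)+o(1).\]
In particular, if $\psi\leq\chi_U$ the left side is $\leq t_M(U)/t_M+o(1)$, while if $\psi\geq\chi_U$ it is $\geq t_M(U)/t_M+o(1)$.

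\emph{The inclusion $\essacc\{x_i\}_i\subseteq\overline{\bigcup_\mu\pi(\supp\mu)}$.} Fix $x\in\essacc\{x_i\}_i$ and $\delta>0$; choose $\eta\in(0,\delta)$ and a continuous $\psi$ with $\chi_{B_\eta(x)}\leq\psi\leq\chi_{B_\delta(x)}$. Since $x$ is essential, $\limsup_N t_N(B_\eta(x))/t_N>0$, so the bridge gives $\limsup_N\int\psi\circ\pi\,d\meas{\gamma}{[0,N]}>0$. Pick $N_j\to+\infty$ realizing this $\limsup$ and extract (Prokhorov) a weak* convergent subsequence $\meas{\gamma}{[0,N_j]}\to\mu\in\acc\{\meas{\gamma}{[0,N]}\}_N$; then $\int\psi\circ\pi\,d\mu=\lim_j\int\psi\circ\pi\,d\meas{\gamma}{[0,N_j]}>0$, and since $\supp\psi\subset\overline{B_\delta(x)}$ this forces $\pi_*\mu(\overline{B_\delta(x)})>0$, so $\pi(\supp\mu)=\supp\pi_*\mu$ meets $\overline{B_\delta(x)}$. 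Hence $\dist\bigl(x,\bigcup_{\mu'}\pi(\supp\mu')\bigr)\leq\delta$ for every $\delta>0$, i.e.\ $x\in\overline{\bigcup_\mu\pi(\supp\mu)}$.

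\emph{The inclusion $\overline{\bigcup_\mu\pi(\supp\mu)}\subseteq\essacc\{x_i\}_i$.} Since $\essacc\{x_i\}_i$ is compact, hence closed, it suffices to show $\pi(\supp\mu)\subseteq\essacc\{x_i\}_i$ for each $\mu\in\acc\{\meas{\gamma}{[0,N]}\}_N$. Let $y\in\pi(\supp\mu)=\supp\pi_*\mu$ and let $V$ be an open neighborhood of $y$. By inner regularity of the Radon measure $\pi_*\mu$ there is a compact $C\subset V$ with $\pi_*\mu(C)>0$, and a continuous $\psi$ with $\chi_C\leq\psi\leq\chi_V$ and $\supp\psi\subset V$, so $\int\psi\circ\pi\,d\mu\geq\pi_*\mu(C)>0$. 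Writing $\mu=\lim_j\meas{\gamma}{[0,N_j]}$, the bridge (with $\psi\leq\chi_V$) yields $0<\int\psi\circ\pi\,d\mu=\lim_j\int\psi\circ\pi\,d\meas{\gamma}{[0,N_j]}\leq\limsup_N t_N(V)/t_N$. Thus every neighborhood of $y$ is visited a positive fraction of the time, i.e.\ $y\in\essacc\{x_i\}_i$. Combining the two inclusions proves the identity.

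\emph{Main obstacle.} The only genuine work is the preliminary bridge: controlling the interpolation error via $\varepsilon_i\|v_i\|\to0$ together with a Toeplitz summation lemma, and keeping the open/closed versions of the balls and the direction of the $\limsup$ inequalities consistent (one needs $\psi\geq\chi_U$ for the first inclusion and $\psi\leq\chi_U$ for the second). Everything else is weak* compactness, inner regularity, and the identity $\pi(\supp\mu)=\supp\pi_*\mu$, all already available.
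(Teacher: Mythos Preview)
Your proposal is correct and follows essentially the same approach as the paper: both establish the ``bridge'' estimate $\frac1{t_N}\int_0^{t_N}\psi(\gamma(t))\,dt=\frac1{t_N}\sum_i\varepsilon_i\psi(x_i)+o(1)$ via uniform continuity of $\psi$ and $\varepsilon_i\to0$, then use it with bump functions and weak* compactness to prove each inclusion. Your write-up is somewhat more explicit (invoking Prokhorov, inner regularity, and the identity $\pi(\supp\mu)=\supp\pi_*\mu$), but there is no substantive difference in the argument.
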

\begin{proof}
 Let $B\subset\R^n$ be a closed ball containing a neighborhood of the sequence $\{x_i\}_i$.
 Let $\psi\colon\R^n\to\R_{\geq0}$ be a continuous function with $\supp\psi\subseteq B$. Since $\psi$ is uniformly continuous on $B$, given $\varepsilon>0$, there is $n_0>0$ such that $i>n_0$, $x,y\in B$, and $\|x-y\|\leq \varepsilon_i\lip(f)$ imply $|\psi(x)-\psi(y)|\leq \varepsilon$. We hence have $|\psi(x_i)-\psi(\gamma(t))|\leq \varepsilon$ for $t_i\leq t\leq t_{i+1}$ and $i>n_0$. Thus, for $S>n_0$,
 \[\left|\sum_{i=n_0}^S\varepsilon_i\psi(x_i)-\int_{t_{n_0}}^{t_S}\psi(\gamma(t))\,dt\right|\leq \varepsilon(t_S-t_{n_0}).\]
 
 Assume $x\in\essacc\{x_i\}_i\subset B$. Take a nonnegative continuous function $\psi$, as above, so that for all $R>0$ there is $S>R$ such that
 \[\frac{\sum_{1\leq i\leq S}\varepsilon_i\psi(x_i)}{\sum_{i=0}^S\varepsilon_i}>\delta.\]
 Then, for $\varepsilon=\delta/2$,  $n_0$ as above, and $S>R>n_0$,
 \begin{align*}
  \int_{\R^n\times\R^n}\psi\,d\meas{\gamma}{[0,S]}(x,v)
   &\geq\frac{1}{t_S} \int_{t_{n_0}}^{t_S}\psi(\gamma(t))\,dt \\
   &\geq 
  \frac{ \sum_{i=n_0}^S\varepsilon_i\psi(x_i)}{\sum_{i=0}^S\varepsilon_i}-\varepsilon \frac{t_S-t_{n_0}}{t_S}\\
   &>\delta-\varepsilon=\delta/2>0.
 \end{align*}
 It follows that there is some $\mu\in\acc\{\meas{\gamma}{[0,N]}\}_N$ with $\pi(\supp\mu)\cap\supp\psi\neq \emptyset$. 
 
 Observe that we can take the support of $\psi$ to be contained inside any neighborhood of $x$, so the argument above proves that there are measures in $\acc\{\meas{\gamma}{[0,N]}\}_N$ whose supports are arbitrarily close to $x$. 
 This proves the first inclusion.
 
  Conversely, assume that $x \in \overline{\bigcup_{\mu\in \acc\{\meas{\gamma}{[0,N]}\}_N}\pi(\supp \mu)}$. For a positive, continuous function $\psi$ with $x\in\pi(\supp\psi)$, there is $\mu \in \acc\{\meas{\gamma}{[0,N]}\}_N$ with $\int\psi\,d\mu > 0$. There is a subsequence of $\{\meas{\gamma}{[0,N]}\}_N$ converging to $\mu$, hence such that ${\sum_{1\leq i\leq S}\varepsilon_i\psi(x_i)}/{\sum_{i=0}^S\varepsilon_i}$ converges to a positive quantity, so that $x \in \essacc\{x_i\}_i$, and we obtain the opposite inclusion.
\end{proof}

The following corollary gives some connection between the discrete and the continuous subgradient systems. %

\begin{coro}[Limiting dynamics]\label{cor:discretecontinuous}
 Let $\{I_i\}_i$ be a sequence of disjoint, bounded intervals in $\R$ with
 \[\lim_{i\to+\infty}|I_i|=+\infty.\]
 Write $G_k=I_1\cup I_2\cup\dots \cup I_{k}$.
 Suppose that for some sequence $\{k_i\}_i\subset\N$, the limit \[\lim_{i\to+\infty}\meas{\gamma}{G_{k_i}}\] exists, so that, by Lemma \ref{lem:longintervals}, it is a closed probability measure $\mu$. 
 Let $\nu$ be a Borel probability measure on the space $\lipschitz$ of Lipschitz curves that is invariant under the time-translation $\tau_t$ and satisfies \eqref{eq:superposition}. Then $\nu$-almost every curve $\beta$ satisfies
 \[-\beta'(t)\in \partial^cf(\beta(t))\]
 for almost every $t\in\R$.
\end{coro}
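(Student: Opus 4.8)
The plan is to transfer the subgradient inclusion from the discrete sequence to the limit measure, and then from the measure to the representing curves via the superposition principle. First I would examine the centroid field $\bar v_x$ of the limiting closed measure $\mu=\lim_i\meas{\gamma}{G_{k_i}}$. The key observation is that along each interval $I_i$ the interpolating curve $\gamma$ has velocity $\gamma'(t)=v_j\in\partial^cf(x_j)$ on the subinterval $(t_j,t_{j+1})$, and $x_j=\gamma(t_j)$ is within $\varepsilon_j\lip(f)$ of $\gamma(t)$ for $t$ in that subinterval. Since $\varepsilon_j\to 0$, the position $\gamma(t)$ and the ``base point'' $x_j$ whose subdifferential contains $\gamma'(t)$ become arbitrarily close as $i\to\infty$. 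Using upper semicontinuity of $\partial^cf$ (closedness of the graph together with local boundedness), this should let me conclude that for $\mu$-almost every $(x,v)$ in the weak* limit, $-v\in\partial^cf(x)$; more precisely, I would argue that $\mu$ is supported on the set $\{(x,v):-v\in\partial^cf(x)\}$, which is closed, so it suffices to show this set asymptotically captures the mass of $\meas{\gamma}{G_{k_i}}$ up to error tending to zero (the mass near the finitely many ``segment corrections'' $\alpha_i$ from the proof of Lemma \ref{lem:longintervals} is negligible, and on each $I_i$ the mass sits within a shrinking neighborhood of the graph of $-\partial^cf$).

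Next, having $\supp\mu\subset\{(x,v):-v\in\partial^cf(x)\}$, I would pass to the centroid: since each fiber measure $\mu_x$ is a probability measure supported in $-\partial^cf(x)$, and $\partial^cf(x)$ is convex and closed, Lemma \ref{lem:centroidclarke} (with $a=-1$) gives $-\bar v_x\in\partial^cf(x)$ for $\pi_*\mu$-almost every $x$. Now invoke Theorem \ref{thm:superposition} to obtain the $\tau_t$-invariant Borel probability measure $\nu$ on $\lipschitz(U)$ satisfying \eqref{eq:superposition}, and Corollary \ref{cor:superpositionvx} to get that for $\nu$-almost every $\beta$, $\beta'(t)=\bar v_{\beta(t)}$ for almost every $t$. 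Combining with the previous sentence, $-\beta'(t)=-\bar v_{\beta(t)}\in\partial^cf(\beta(t))$ for almost every $t$, which is the claim. One must be a little careful that the ``almost every $t$'' statements compose correctly: $\beta'(t)=\bar v_{\beta(t)}$ holds for a.e.\ $t$, and $-\bar v_y\in\partial^cf(y)$ holds for $\pi_*\mu$-a.e.\ $y$; since by \eqref{eq:superposition} the law of $\beta(t)$ (for $t$ uniform on an interval, $\beta$ distributed by $\nu$) is absolutely continuous with respect to $\pi_*\mu$, a Fubini argument over $\lipschitz\times I$ gives that for $\nu$-a.e.\ $\beta$, the inclusion holds at a.e.\ $t$.

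The main obstacle I anticipate is the first step: rigorously showing that the weak* limit $\mu$ is supported on the graph of $-\partial^cf$. The subtlety is that $\meas{\gamma}{G_{k_i}}$ itself is not supported there — the velocity at $\gamma(t)$ lies in $\partial^cf(x_j)$, not $\partial^cf(\gamma(t))$ — so I need a quantitative argument: fix a continuous $\phi\ge 0$ vanishing on $\{(x,v):-v\in\partial^cf(x)\}$, and show $\int\phi\,d\meas{\gamma}{G_{k_i}}\to 0$. For each fixed $i$ this integral is controlled by $\sup$ of $\phi$ over a $\delta_i$-neighborhood of the graph, where $\delta_i\to 0$ is governed by $\sup_{j\ge J_i}\varepsilon_j$ (with $J_i$ the first index in $G_{k_i}$) together with the modulus of upper semicontinuity of $\partial^cf$ on the relevant compact set; since $\phi$ vanishes on the (closed) graph and is uniformly continuous on compacts, this sup tends to $0$. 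This, plus the negligibility of the $\alpha_i$-corrections already established in Lemma \ref{lem:longintervals}, yields $\int\phi\,d\mu=0$ for all such $\phi$, hence $\supp\mu\subset\{(x,v):-v\in\partial^cf(x)\}$. The remaining steps are then routine applications of the tools assembled in Section \ref{sec:closedmeas}.
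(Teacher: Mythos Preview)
Your overall strategy coincides with the paper's: show $\supp\mu\subset\operatorname{graph}(-\partial^cf)$, deduce $-\bar v_x\in\partial^cf(x)$ via Lemma~\ref{lem:centroidclarke}, and then transfer to $\nu$-a.e.\ curves through Corollary~\ref{cor:superpositionvx}. Your Fubini remark about composing the two ``almost every'' statements is in fact more careful than the paper, which leaves that step implicit.

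There is, however, a genuine slip in your execution of the first step. You claim that $\int\phi\,d\meas{\gamma}{G_{k_i}}$ is bounded by $\sup\phi$ over a $\delta_i$-neighborhood of the graph, with $\delta_i$ controlled by $\sup_{j\ge J_i}\varepsilon_j$ where $J_i$ is the first index appearing in $G_{k_i}$. But $G_{k_i}=I_1\cup\dots\cup I_{k_i}$ always contains $I_1$, so $J_i$ is constant in $i$ and $\delta_i$ does not tend to $0$. The early intervals $I_1,\dots,I_{k_0}$ sit at fixed times where $\varepsilon_\ell$ may be large, and the pair $(\gamma(t),\gamma'(t))$ there need not be close to the graph.

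The repair is a Ces\`aro-type averaging, which is exactly what the paper does: bound
\[
\int_{\R^n\times\R^n}\dist\bigl((x,v),\operatorname{graph}(-\partial^cf)\bigr)\,d\meas{\gamma}{G_{k_i}}
\;\le\;\frac{\lip(f)\sum_{j=1}^{k_i}|I_j|\max_{t_\ell\in I_j}\varepsilon_\ell}{\sum_{j=1}^{k_i}|I_j|},
\]
and then observe that since the $I_j$ are disjoint with $|I_j|\to\infty$, the later $I_j$ necessarily lie at large times, so $\max_{t_\ell\in I_j}\varepsilon_\ell\to 0$; Stolz--Ces\`aro (or a direct splitting into the first $k_0$ intervals, whose total length is fixed while $|G_{k_i}|\to\infty$, plus the rest) gives the limit $0$. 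Your test-function formulation with general $\phi$ works equally well once you insert this averaging. Finally, the reference to the segment corrections $\alpha_i$ is unnecessary here: those were used only to establish closedness of the limit in Lemma~\ref{lem:longintervals}; for the support argument one integrates directly against $\meas{\gamma}{G_{k_i}}$.
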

\begin{proof}
 The existence of $\nu$ follows from Theorem \ref{thm:superposition}. By Corollary \ref{cor:superpositionvx}, we know that $\nu$-almost every curve $\beta\in\lipschitz$ satisfies,  $\beta'(t)=\bar v_{\beta(t)}$ for almost every $t$.   So we just need to prove that $\bar v_x\in-\partial^cf(x)$ for $\pi_*\mu$-almost every $x\in\R^n$. 
 
 Recall that $ \operatorname{graph}-\partial^cf=\{(x,v)\in\R^n\times\R^n:-v\in\partial^cf(x)\}.$ 
Let $t_i\leq t<t_{i+1}$, using the triangle inequality, 
the fact that $-\gamma'(t)$ is constant equal to $v_i$ in the interval $t\in[t_i,t_{i+1}]$ and belongs to $\partial^cf(\gamma(t_i))$, we have 
\begin{align*}
  \dist(&(\gamma(t),\gamma'(t)),\operatorname{graph}-\partial^cf)\\
 &\leq   \|(\gamma(t), \gamma'(t))-(\gamma(t_i), -v_i)\|+\dist((\gamma(t_i),-v_i),\operatorname{graph}-\partial^cf)\\
 &=  \|(\gamma(t),- v_i)-(\gamma(t_i),- v_i)\|+0\\
 &=   \|\gamma(t)-\gamma(t_i)\|\\
 & \leq   \lip(\gamma)\varepsilon_i\\
 &\leq \lip(f)\varepsilon_i.
\end{align*}
Now 
\begin{align*}
 \int_{\R^n\times\R^n}&\dist((x,v),\operatorname{graph}-\partial^cf)\,d\meas{\gamma}{G_{k_i}}(x,v)\\
 &=  \frac{1}{\sum_{j=1}^{k_i} |I_j|}\sum_{j=1}^{k_i} \int_{I_j}\dist\left((\gamma(t), \gamma'(t)),\operatorname{graph}-\partial^cf\right)dt\\
 & \leq   \frac{\lip(f)\sum_{j=1}^{k_i}  |I_j|\max_{t_\ell\in I_j} \varepsilon_\ell }{\sum_{j=1}^{k_i} |I_j|}.
\end{align*}
 This implies that 
 \[\lim_{i\to+\infty} \int_{\R^n\times\R^n}\dist((x,v),\operatorname{graph}-\partial^cf)\,d\meas{\gamma}{G_{k_i}}(x,v)=0\]
 by the Stolz-Ces\`aro theorem using the fact that, for $k$ large enough, $\sum_{j=1}^{k} |I_j| \geq ck$ for a positive constant $c$, and the fact that $\varepsilon_i$ converges to $0$ as $i \to + \infty$.
 This, in turn, implies that
 \[ \int_{\R^n\times\R^n}\dist((x,v),\operatorname{graph}-\partial^cf)\,d\mu(x,v)=0\]
 because the convergence of measures occurs in the weak* topology and the integrand is continuous. %
 Since $\operatorname{graph}-\partial^cf$ is a closed set, the support of $\mu$ must be contained in it. %
 From Lemma \ref{lem:centroidclarke} with $a=-1$, we know that $-\bar v_x\in \partial^cf(x)$, which is what we wanted to prove.
\end{proof}

\begin{thm}[Subgradient-like closed measures are trivial]\label{thm:vxvanishes}
 Assume that $f\colon\R^n\to\R$ is a path differentiable function.
 Let $\mu$ be a closed measure on $\R^n\times\R^n$, and assume that every $(x,v)\in\supp\mu$ satisfies $-v\in\partial^cf(x)$. Then the centroid field $\bar v_x$ of $\mu$ vanishes for $\pi_*\mu$-almost every $x$.
\end{thm}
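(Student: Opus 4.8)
The plan is to reduce to a probability measure (normalizing $\mu$ changes neither $\supp\mu$ nor the centroid field, and the zero measure is trivial), to choose a bounded open set $U\subset\R^n$ with $\pi(\supp\mu)\subset U$, which is possible since $\supp\mu$ is compact, and to feed $\mu$ into the Young superposition principle (Theorem \ref{thm:superposition}) to obtain a $\tau_t$-invariant Borel probability measure $\nu$ on $\lipschitz(U)$ satisfying \eqref{eq:superposition}. The strategy is then to use $f$ itself as a Lyapunov function that decreases along the curves charged by $\nu$ at a rate equal to the squared speed, and to combine the boundedness of $f$ on $\overline U$ with the time-translation invariance of $\nu$ to conclude that this speed must vanish.

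First I would show that $\nu$-almost every curve $\gamma$ is a subgradient curve. Since $\supp\mu\subset\operatorname{graph}(-\partial^c f)$, the standard support property of disintegrations gives $-\supp\mu_x\subset\partial^c f(x)$ for $\pi_*\mu$-almost every $x$, and Lemma \ref{lem:centroidclarke} with $a=-1$ then yields $-\bar v_x\in\partial^c f(x)$ for $\pi_*\mu$-almost every $x$. Let $E$ be the $\pi_*\mu$-null set on which this fails. Applying \eqref{eq:superposition} with the integrand $\chi_E(x)$ and the $\tau_t$-invariance of $\nu$ exactly as in \eqref{eq:timetranslationinv} (and a Tonelli argument over $t\in[-k,k]$, $k\to\infty$), one gets that for $\nu$-almost every $\gamma$ the set $\{t:\gamma(t)\in E\}$ is Lebesgue-null. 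Combining this with Corollary \ref{cor:superpositionvx}, which gives $\gamma'(t)=\bar v_{\gamma(t)}$ for almost every $t$, I conclude $-\gamma'(t)\in\partial^c f(\gamma(t))$ for $\nu$-almost every $\gamma$ and almost every $t$.

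Now I would invoke path differentiability: for every Lipschitz curve $\gamma$ and almost every $t$, $(f\circ\gamma)'(t)=v\cdot\gamma'(t)$ for all $v\in\partial^c f(\gamma(t))$; choosing the admissible $v=-\gamma'(t)$ gives $(f\circ\gamma)'(t)=-\|\gamma'(t)\|^2$ for $\nu$-almost every $\gamma$ and almost every $t$. As $f\circ\gamma$ is Lipschitz, hence absolutely continuous, integration gives $\int_0^T\|\gamma'(t)\|^2\,dt=(f\circ\gamma)(0)-(f\circ\gamma)(T)\leq 2M$ for every $T>0$, where $M=\sup_{\overline U}|f|<\infty$. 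Finally, using \eqref{eq:superposition} with $\phi(x,v)=\|v\|^2$ (finite since $\supp\mu$ is compact, so $\bar v$ is bounded) and the invariance of $\nu$ as in \eqref{eq:timetranslationinv},
\[\int_{\R^n}\|\bar v_x\|^2\,d(\pi_*\mu)(x)=\int_{\lipschitz}\|\gamma'(0)\|^2\,d\nu(\gamma)=\frac1T\int_{\lipschitz}\int_0^T\|\gamma'(t)\|^2\,dt\,d\nu(\gamma)\leq\frac{2M}{T}\]
for all $T>0$; letting $T\to\infty$ forces $\int\|\bar v_x\|^2\,d(\pi_*\mu)(x)=0$, i.e. $\bar v_x=0$ for $\pi_*\mu$-almost every $x$. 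I expect the only genuinely delicate point to be the measure-theoretic bookkeeping in the second paragraph — transferring the almost-everywhere statement about $\bar v_x$ into an almost-everywhere statement along $\nu$-a.e. curve via Fubini and the $\tau_t$-invariance of $\nu$; the Lyapunov-descent computation itself is routine once the curves are known to be subgradient curves.
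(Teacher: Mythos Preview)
Your argument is correct, but it reaches the conclusion by a different path than the paper. Both proofs start the same way: Lemma~\ref{lem:centroidclarke} with $a=-1$ gives $-\bar v_x\in\partial^c f(x)$ for $\pi_*\mu$-almost every $x$, and both ultimately invoke the superposition principle. The paper, however, does not pass to a Lyapunov descent along individual curves. Instead it applies Proposition~\ref{prop:welldefined} directly to $\mu$: that proposition says (via mollification and closedness) that $\int\partial^c f(x)\cdot v\,d\mu=0$ and that the value of this integral does not depend on the measurable selection $\sigma(x)\in\partial^c f(x)$; taking $\sigma(x)=-\bar v_x$ and using~\eqref{eq:linearityvbar} immediately yields $0=-\int\|\bar v_x\|^2\,d(\pi_*\mu)$. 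Your route replaces this ``circulation vanishes'' identity by the dynamical observation $(f\circ\gamma)'=-\|\gamma'\|^2$ along $\nu$-a.e.\ curve, together with boundedness of $f$ on $\overline U$ and the $\tau_t$-invariance averaging trick to force $\int\|\bar v_x\|^2\,d(\pi_*\mu)\leq 2M/T\to0$. The paper's version is shorter once Proposition~\ref{prop:welldefined} is in hand and uses closedness of $\mu$ in a cleaner, purely algebraic way; your version is more hands-on but has the merit of bypassing the mollification argument hidden inside Proposition~\ref{prop:welldefined}, and it makes the Lyapunov mechanism (bounded potential plus energy dissipation forces stationarity) completely explicit. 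The measure-theoretic bookkeeping you flag as delicate is indeed routine and works exactly as you outline.
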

\begin{proof}
 The condition on $\mu$ implies, by Lemma \ref{lem:centroidclarke} with $a=-1$, that $-\bar v_x\in\partial^cf(x)$.
 By Proposition \ref{prop:welldefined} we may choose $\sigma(x)=-\bar v_x$ to compute
 \begin{multline*}
  \int\partial^cf\,d\mu=\int \sigma(x)\cdot v\,d\mu(x,v)=\int \sigma(x)\cdot\left[\int v\,d\mu_x\right]d(\pi_*\mu)(x) \\
  =\int \sigma(x)\cdot \bar v_x\,d(\pi_*\mu)(x)=-\int \bar v_x\cdot \bar v_x\,d(\pi_*\mu)(x).
 \end{multline*}
 Proposition \ref{prop:welldefined} also implies that the left-hand side vanishes because $\mu$ is closed.
\end{proof}

\section{Proofs of main results}
\label{sec:proofs}

\subsection{Lemmas on the convergence of curve segments}
\label{sec:pflemmasegments}

\begin{lem}\label{lem:derivativeclarke}
 For each $i\in \N$, let $T_i>0$ and assume that $T_i\to T$ for some $T>0$. Let, for each $i\in\N$, $\gamma_i\colon [0,T_i]\to\R^n$ be a Lipschitz curve. Assume that the sequence $\{\gamma_i\}_i$ converges to some bounded, Lipschitz curve $\gamma\colon[0,T]\to \R$, $\gamma_i\to\gamma$, in the sense that $\sup_{t\in[0,\min(T_i,T)]}\|\gamma(t)-\gamma_i(t)\|\to 0$, and satisfies
 \begin{equation}\label{eq:closetoclarke}\lim_{i\to+\infty}\int_0^{T_i}\dist((\gamma_i(t),\gamma_i'(t)),\operatorname{graph}-\partial^cf)\,dt=0.
 \end{equation}
 Then $-\gamma'(t)\in\partial^cf(\gamma(t))$ for almost all $t\in[0,T]$.
\end{lem}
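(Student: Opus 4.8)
The plan is to upgrade hypothesis \eqref{eq:closetoclarke} to genuine compactness of the velocities $\{\gamma_i'\}$ and then pass to the limit using the upper semicontinuity of the Clarke subdifferential.

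\emph{Step 1 (compactness of the velocities).} For $i$ large the image of $\gamma_i$ lies in a fixed compact set $K$ on which $f$ is $L$-Lipschitz, so $\partial^cf(x)\subseteq\bar B(0,L)$ for $x\in K$ and $\dist((\gamma_i(t),\gamma_i'(t)),\operatorname{graph}-\partial^cf)\ge\max(0,\|\gamma_i'(t)\|-L)$; hence \eqref{eq:closetoclarke} forces $\int_0^{T_i}\max(0,\|\gamma_i'(t)\|-L)\,dt\to0$, which is exactly what gives a uniform $L^1$ bound and uniform integrability of $\{\gamma_i'\}$. By the Dunford--Pettis theorem a subsequence then satisfies $\gamma_i'\rightharpoonup g$ weakly in $L^1$ (the mismatch between $[0,T_i]$ and $[0,T]$ occurs on a set of vanishing measure, harmless by the same uniform integrability), and passing to the limit in $\gamma_i(t)=\gamma_i(0)+\int_0^t\gamma_i'$ for each $t$, together with $\gamma_i\to\gamma$ uniformly, identifies $g=\gamma'$.

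\emph{Step 2 (reduction to support functions).} Writing $\sigma_x(p)=\sup_{v\in\partial^cf(x)}v\cdot p$, the target $-\gamma'(t)\in\partial^cf(\gamma(t))$ is equivalent --- $\partial^cf(\gamma(t))$ being nonempty, compact and convex --- to $-\gamma'(t)\cdot p\le\sigma_{\gamma(t)}(p)$ for all $p$, and it suffices to verify this for $p$ in a fixed countable dense set, since $p\mapsto-\gamma'(t)\cdot p$ is continuous and $p\mapsto\sigma_{\gamma(t)}(p)$ is Lipschitz. For such a fixed $p$ one is reduced to proving $\int_A-\gamma'(t)\cdot p\,dt\le\int_A\sigma_{\gamma(t)}(p)\,dt$ for every measurable $A\subseteq(0,T)$; note that $x\mapsto\sigma_x(p)$ is upper semicontinuous (closed graph plus local boundedness of $\partial^cf$) and bounded on $K$.

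\emph{Step 3 (passing to the limit).} For fixed $\epsilon>0$, Chebyshev's inequality and \eqref{eq:closetoclarke} give $|E_i|\to0$ for the bad set $E_i=\{t\in A:\dist((\gamma_i(t),\gamma_i'(t)),\operatorname{graph}-\partial^cf)>\epsilon\}$, and off $E_i$ one bounds $-\gamma_i'(t)\cdot p$ by $\max\{\sigma_z(p):z\in K,\ \|z-\gamma(t)\|\le\epsilon+\|\gamma_i-\gamma\|_\infty\}+\epsilon\|p\|$. Testing the weak convergence $\gamma_i'\rightharpoonup\gamma'$ against the bounded map $-\chi_A\,p$, the $E_i$-contribution vanishes by Step 1 (its modulus is at most $\|p\|\int_{E_i}\|\gamma_i'\|\to0$), while on $A\setminus E_i$ one lets $i\to\infty$ (using upper semicontinuity of $\sigma_\cdot(p)$ and compactness of $K$, together with dominated convergence over the finite-measure set $A$) and then $\epsilon\to0$, obtaining the inequality of Step 2 and hence the lemma.

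\emph{Main obstacle.} The delicate point is Step 3: \eqref{eq:closetoclarke} controls the distance to $\operatorname{graph}-\partial^cf$ only in an averaged sense, so $\gamma_i'$ may be simultaneously large and far from $\partial^cf$ on a small set of times. The uniform-integrability estimate of Step 1 is precisely what makes this exceptional set negligible in the limit, and the upper semicontinuity of the support function $\sigma_\cdot(p)$ together with the compactness of $K$ is what allows the nearby base points $z\approx\gamma(t)$ to be absorbed when passing to the limit.
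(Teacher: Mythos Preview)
Your Steps 2--3 give a correct and genuinely different route from the paper's. The paper obtains weak $L^2$ convergence of $\gamma_i'$ to $\gamma'$, then invokes Mazur's lemma to upgrade to strong convergence of convex combinations, and finally uses the convexity of $v\mapsto\dist(-v,\partial^cf(x))$ together with Jensen and Fatou to pass to the limit inside $\int g(\gamma,\gamma')$. You instead dualize via support functions, isolate the bad set by Chebyshev, and pass to the limit using upper semicontinuity of $x\mapsto\sigma_x(p)$ and dominated convergence. Both approaches exploit the convexity of $\partial^cf(x)$ in an essential way; yours avoids Mazur's lemma at the price of a slightly more hands-on $\epsilon$-argument.

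There is, however, a real gap in Step 1. Your inequality
\[
\dist\big((\gamma_i(t),\gamma_i'(t)),\operatorname{graph}(-\partial^cf)\big)\;\ge\;\max(0,\|\gamma_i'(t)\|-L)
\]
is false in general for a merely \emph{locally} Lipschitz $f$: the graph contains points $(x,-w)$ with $x$ far from $K$ and $\|w\|$ arbitrarily large, so a large $\|\gamma_i'(t)\|$ can be matched at the cost of a moderate $x$-displacement. What one actually gets is something like $\dist\ge\min(r,\|\gamma_i'(t)\|-L_r)$ for the Lipschitz constant $L_r$ of $f$ on an $r$-neighborhood of $K$; this controls the \emph{measure} of $\{\|\gamma_i'\|>L_r+r\}$ but not $\int_{\{\cdot\}}\|\gamma_i'\|$, so uniform integrability (and hence Dunford--Pettis) does not follow from the stated hypotheses.

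In fairness, the paper's own proof has the same lacuna: it simply asserts that the $\gamma_i'$ are ``uniformly bounded'' and then applies Banach--Alaoglu. In the only place the lemma is used (via Lemma \ref{lem:generaltechnique}, where the $\gamma_i$ are restrictions of the interpolating curve of a bounded subgradient sequence), the $\gamma_i$ are uniformly Lipschitz with constant $\lip(f|_K)$, so both arguments go through once this extra hypothesis is made explicit. With a uniform Lipschitz bound granted, your Step 1 becomes immediate (Dunford--Pettis is then overkill---weak-$*$ compactness in $L^\infty$ suffices), and the remainder of your proof is correct.
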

\begin{proof}
 We follow classical arguments; see for example \cite[Theorem 4.2]{BHS}. Let $0<T'<T$. For $i$ large enough, $T_i>T'$ because $T_i\to T$. In particular, we eventually have uniform convergence of $\gamma_i$ on $[0,T']$ to the restriction of $\gamma$ to $[0,T']$. 
 For each $i$, the derivative $\gamma_i'$ is an element of $L^\infty=L^\infty([0,T'];\R^d)$, and being uniformly bounded with compact domain, belong to $L^2=L^2([0,T'];\R^d)$ as well.
 Recall that, since $L^2$ is reflexive, the weak and weak* topologies coincide in $L^2$.
 So by the Banach--Alaoglu compactness theorem, by passing to a subsequence we may assume that $\gamma'_j$ converge weakly in $L^2$ and weak* in $L^\infty$ to some $u\in L^2 \cap L^\infty$.
 
 Since $\gamma_j$ converges to $\gamma$ uniformly, $\gamma_j\to\gamma$ also in $L^2$. Hence $\gamma'_j$ tends to $\gamma'$ in the sense of distributions on $[0,T']$; indeed, for all $C^\infty$ functions $g\colon[0,T']\to\R$ with compact support in $(0,T')$, we have 
 \begin{equation*}
  \int_0^{T'}\gamma'_j(t)g(t)\,dt=-\int_0^{T'}\gamma_j(t)g'(t)\,dt
  \to-\int_0^{T'}\gamma(t)g'(t)\,dt
  =\int_0^{T'}\gamma'(t)g(t)\,dt
 \end{equation*}
 since we have convergence in $L^2$. By uniqueness of the limit, $u=\gamma'$ almost everywhere on $[0,T']$.

It follows from Mazur's lemma \cite[p. 6]{ekelandtemam} that there is a function $N\colon\N\to\N$ and, for each $p\leq k\leq N(p)$, a number $a(p,k)\geq 0$ such that $\sum_{k=p}^{N(p)}a(p,k)=1$, and such that the convex combinations \begin{equation}\label{eq:mazur}
 \sum_{k=p}^{N(p)}a(p,k)\gamma'_k\to\gamma'
\end{equation}
 strongly in $L^2$ as $p\to +\infty$ (and also in the weak* sense in $L^\infty$). 
 Since the Clarke subdifferential $\partial^cf(x)$ is convex at each $x$, the function 
  \[g(x,v)=\dist(-v,\partial^cf(x))\]
 is convex in its second argument for fixed $x \in \R^n$.    Using the fact that the convergence \eqref{eq:mazur} happens pointwise almost everywhere, we have, by continuity of $g$ and by the fact that countable union of zero measure sets has zero measure, for almost all $t \in [0, T']$
 \begin{align*}
  g(\gamma(t),\gamma'(t)) &=g({\gamma(t),\lim_{p\to+\infty}\textstyle\sum_{k=p}^{N(p)}a(p,k)\gamma'_k(t)})\\
  &=\lim_{p\to+\infty}g({\textstyle\gamma(t),\sum_{k=p}^{N(p)}a(p,k)\gamma'_k(t)})\\
  &\leq \liminf_{p\to+\infty} \sum_{k=p}^{N(p)}a(p,k)g(\gamma(t),\gamma'_k(t)),
 \end{align*}
 where the last step follows from Jensen's inequality and convexity of $g$ in its second argument.
 Since $g$ is non negative, integrating on $[0,T']$, we have using Fatou's Lemma, 
 \begin{align*}
  0&\leq \int_0^{T'}g(\gamma(t),\gamma'(t))\,dt\\
  &\leq \liminf_{p\to+\infty}\int_0^{T'}\sum_{k=p}^{N(p)}a(p,k)g(\gamma(t),\gamma'_k(t))\,dt\\
  &\leq\liminf_{p\to+\infty}\int_0^{T'}\sum_{k=p}^{N(p)}a(p,k)[\dist((\gamma(t),\gamma'_k(t)),(\gamma_k(t),\gamma'_k(t)))\\
  &\hspace{6.5cm}+g(\gamma_k(t),\gamma'_k(t))]\,dt\\
  &=\liminf_{p\to+\infty}\int_0^{T'}\sum_{k=p}^{N(p)}a(p,k)[\dist(\gamma(t),\gamma_k(t))+g(\gamma_k(t),\gamma'_k(t))]\,dt\\
   \end{align*}
 where we have used the triangle inequality.
 Now, using a uniform bound on the integral, we have 
 \begin{align*}
  0&\leq \int_0^{T'}g(\gamma(t),\gamma'(t))\,dt\\
  &\leq\liminf_{p\to+\infty}\sum_{k=p}^{N(p)}a(p,k)\left(T'\sup_{t\in[0,T']}[\dist(\gamma(t),\gamma_k(t))]+ \int_0^{T'} g(\gamma_k(t),\gamma'_k(t))\right)\\
  &\leq \liminf_{p\to+\infty}\sup_{p\leq k\leq N(p)}\left(T'\sup_{t\in[0,T']}[\dist(\gamma(t),\gamma_k(t))]+ \int_0^{T'} g(\gamma_k(t),\gamma'_k(t))\right)\\
  &\leq \limsup_{k\to+\infty} \left(T'\sup_{t\in[0,T']}[\dist(\gamma(t),\gamma_k(t))]+ \int_0^{T'} g(\gamma_k(t),\gamma'_k(t))\right)=0,
 \end{align*}
 where we used the fact that $\sum_{k=p}^{N(p)}a(p,k)=1$, the fact
 that $\gamma_k\to\gamma$ uniformly and the hypothesis in \eqref{eq:closetoclarke}. Hence we have $-\gamma'(t)\in\partial^cf(\gamma(t))$ for almost all $t\in[0,T']$, and this proves the lemma since $T'$ was taken arbitrary in $(0,T)$.
\end{proof}

\begin{lem}\label{lem:generaltechnique}
 Let $\gamma$ be the interpolant curve of the bounded gradient sequence $\{x_i\}_i$, and let $\{I_j\}_j$ be a collection of pairwise-disjoint intervals of $\R_{\geq0}$ of  length $1/C \leq |I_j| \leq C$ for some $C>1$. 
 Then there is a subsequence $\{j_k\}_k\subset\N$ such that the restrictions $\gamma|_{I_{j_k}}$ converge uniformly to a Lipschitz curve $\bar\gamma\colon[a,b]\to\R$ that satisfies
 \[-\int_a^b\|\bar\gamma'(t)\|^2dt=f\circ\bar\gamma(b)-f\circ\bar\gamma(a).\]
\end{lem}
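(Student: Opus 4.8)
The plan is to pass to a uniformly convergent subsequence of the pieces $\gamma|_{I_j}$, to identify the limit as a subgradient curve by invoking Lemma \ref{lem:derivativeclarke}, and then to read off the energy identity from path differentiability.

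First I would normalize. Write $I_j=[s_j,s_j+\ell_j]$ with $1/C\le\ell_j\le C$, and set $\tilde\gamma_j(t)=\gamma(s_j+t)$ for $t\in[0,\ell_j]$. Since $\{x_i\}_i$ is bounded and $f$ is locally Lipschitz, all subgradients $v_i\in\partial^cf(x_i)$ are bounded by some constant $L$ on a compact neighbourhood $K$ of the sequence, so every $\tilde\gamma_j$ is $L$-Lipschitz with image in $K$. Extending each $\tilde\gamma_j$ to $[0,C]$ by the constant value $\tilde\gamma_j(\ell_j)$ on $[\ell_j,C]$ gives a uniformly Lipschitz, uniformly bounded family on a fixed interval, so by the Arzel\`a--Ascoli theorem, along a subsequence $\{j_k\}_k$ these extensions converge uniformly, and after a further extraction we may also assume $\ell_{j_k}\to b$ for some $b\in[1/C,C]$. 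Taking $a=0$ and letting $\bar\gamma\colon[a,b]\to\R^n$ be the restriction of the limit, we obtain $\sup_{t\in[0,\min(\ell_{j_k},b)]}\|\tilde\gamma_{j_k}(t)-\bar\gamma(t)\|\to0$, with $\bar\gamma$ being $L$-Lipschitz; this is exactly the mode of convergence required by Lemma \ref{lem:derivativeclarke}, with $T_i=\ell_{j_k}\to b=T>0$.

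Next I would verify the remaining hypothesis \eqref{eq:closetoclarke} of Lemma \ref{lem:derivativeclarke}. Exactly as in the estimate inside the proof of Corollary \ref{cor:discretecontinuous}: if $t_\ell\le s<t_{\ell+1}$ then $-\gamma'(s)=v_\ell\in\partial^cf(\gamma(t_\ell))$ while $\|\gamma(s)-\gamma(t_\ell)\|\le L\varepsilon_\ell$, so $\dist((\gamma(s),\gamma'(s)),\operatorname{graph}-\partial^cf)\le L\varepsilon_\ell$; integrating over $I_{j_k}$,
\[\int_{I_{j_k}}\dist\big((\gamma(s),\gamma'(s)),\operatorname{graph}-\partial^cf\big)\,ds\;\le\;L\,|I_{j_k}|\max_{t_\ell\in I_{j_k}}\varepsilon_\ell\;\le\;LC\max_{t_\ell\in I_{j_k}}\varepsilon_\ell.\]
The one point that needs care is the vanishing of the right-hand side: because the $I_j$ are pairwise disjoint with lengths $\ge1/C$, only finitely many of them meet any bounded subset of $\R_{\ge0}$, hence $\inf I_j\to+\infty$ as $j\to\infty$; the grid indices $\ell$ appearing above therefore satisfy $t_\ell\to+\infty$, and since $\varepsilon_\ell\searrow0$ the bound tends to $0$. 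Lemma \ref{lem:derivativeclarke} then yields $-\bar\gamma'(t)\in\partial^cf(\bar\gamma(t))$ for almost every $t\in[a,b]$.

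Finally, since $f$ is path differentiable (as in the setting of Theorem \ref{thm:main}) and $\bar\gamma$ is Lipschitz, for almost every $t$ the map $f\circ\bar\gamma$ is differentiable at $t$ with $(f\circ\bar\gamma)'(t)=w\cdot\bar\gamma'(t)$ for every $w\in\partial^cf(\bar\gamma(t))$; choosing $w=-\bar\gamma'(t)$, which lies in $\partial^cf(\bar\gamma(t))$ by the previous step, gives $(f\circ\bar\gamma)'(t)=-\|\bar\gamma'(t)\|^2$ a.e. Integrating over $[a,b]$ (legitimate since $f\circ\bar\gamma$ is Lipschitz, hence absolutely continuous) produces $f\circ\bar\gamma(b)-f\circ\bar\gamma(a)=-\int_a^b\|\bar\gamma'(t)\|^2\,dt$, which is the claim. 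The substantive step is the vanishing of the distance-to-graph integrals along $\{j_k\}_k$, which rests on the disjointness of the $I_j$ forcing them to escape to infinity and on the upper length bound $|I_j|\le C$; the rest is Arzel\`a--Ascoli plus a single use of the chain rule.
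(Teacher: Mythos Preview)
Your proof is correct and follows the same route as the paper: Arzel\`a--Ascoli compactness for the translated pieces, then Lemma \ref{lem:derivativeclarke} to get $-\bar\gamma'\in\partial^cf(\bar\gamma)$ a.e., then the chain rule from path differentiability to obtain the energy identity. You are in fact more explicit than the paper about why hypothesis \eqref{eq:closetoclarke} is satisfied---the paper simply asserts it, whereas you correctly observe that pairwise disjointness together with $|I_j|\ge 1/C$ forces $\inf I_j\to+\infty$, hence $\max_{t_\ell\in I_{j_k}}\varepsilon_\ell\to 0$.
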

\begin{proof}
 By passing to a subsequence, we may assume that the lengths $|I_j|$ converge to a positive number. 
 By the Lipschitz version of the Arzel\`a--Ascoli theorem, we may pass to a subsequence such that $\gamma|_{_{I_{j_k}}}$ converges uniformly to a curve $\bar \gamma$ on an interval $[a,b]$ of length $\lim_{j\to+\infty}|I_j|>0$. Condition \ref{eq:closetoclarke} holds if we let $\gamma_i$ be the appropriate translate of $\gamma|_{I_i}$, so by Lemma \ref{lem:derivativeclarke}, $-\bar\gamma'(t)\in\partial^cf(\bar\gamma(t))$ for almost every $t\in[a,b]$. By the path differentiability of $f$, we have
 \begin{align*}
  -\int_a^b\|\bar\gamma'(t)\|^2dt&=\int_a^b\partial^cf(\bar\gamma(t))\cdot\bar\gamma'(t)dt\\
  &=\int_a^b(f\circ\bar\gamma)'(t)\,dt=f\circ\bar\gamma(b)-f\circ\bar\gamma(a). \qedhere
 \end{align*}
\end{proof}

\subsection{Proof of Theorem \ref{thm:main}}
\label{sec:proofmain}

\subsubsection{Item \ref{ma:longloops}}
\label{sec:proofmain0}
Let $\gamma$ be the interpolant curve of the sequence $\{x_i\}_i$, and consider the intervals $I_k=[t_{i_k},t_{i'_k}]$, so that the endpoints of the restriction $\gamma|_{_{I_k}}$ are precisely $\gamma(t_{i_k})=x_{i_k}$ and $\gamma(t_{i'
_k})=x_{i'_k}$. Aiming for a contradiction, assume that the numbers $\bar T_k=t_{i'_k}-t_{i_k}$ remain bounded. Apply Lemma \ref{lem:generaltechnique} to obtain a curve $\bar\gamma\colon[a,b]\to\R^n$ joining $\bar\gamma(a)=\lim_kx_{i_k}=x$ and $\bar\gamma(b)=\lim_kx_{i'_k}=y$. So we have that the arc length of $\bar\gamma$ must be positive because $x\neq y$, while $\bar\gamma$ also satisfies, as part of the conclusion of Lemma \ref{lem:generaltechnique},
\[0\geq -\int_a^b\|\bar\gamma'(t)\|^2dt=f\circ\bar\gamma(b)-f\circ\bar\gamma(a)=f(y)-f(x)\geq0.\]
Whence we get the contradiction we were aiming for.

\subsubsection{Item \ref{ma:oscillationcompcont}}
\label{sec:proofmain1}
Let $B\subset\R^n$ be a closed ball containing the sequence $\{x_i\}_i$. By convexity, $B$ contains also the image of the interpolating curve $\gamma$.

Fix $\varepsilon>0$. By uniform continuity of $\psi$ over $B$, there exists $n_0>0$ such that $i>n_0$, $x,y\in B$ and $|x-y|\leq \varepsilon_i\lip(f)$ imply $|\psi(x)-\psi(y)|\leq \varepsilon$. We hence have $|\psi(x_i)-\psi(\gamma(t))|\leq\varepsilon$ for $t_i\leq t\leq t_{i+1}$ and $i>n_0$. Thus 
\[\left|\sum_{i=n_0}^{N_j}\varepsilon_iv_i\psi(x_i)-\int_{t_{n_0}}^{t_{N_j}}\gamma'(t)\psi(\gamma(t))dt\right|\leq \varepsilon \lip(f)(t_{N_j}-t_{n_0}) \]
and
\begin{multline*}
 \frac{1}{\sum_{i=0}^{N_j}\varepsilon_i}
 \left|\sum_{i=0}^{N_j}\varepsilon_iv_i\psi(x_i)-\int_{0}^{t_{N_j}}\gamma'(t)\psi(\gamma(t))dt\right|\\
 \leq \frac{1}{\sum_{i=0}^{N_j}\varepsilon_i}\left[\left|\sum_{i=0}^{n_0-1} \varepsilon_i v_i\psi(x_i)-\int_{0}^{t_{n_0}}\gamma'(t)\psi(\gamma(t))dt\right| +\varepsilon \lip(f)(t_{N_j}-t_{n_0})\right].
\end{multline*}
Since $\sum_{i=0}^\infty\varepsilon_i=+\infty$ and $\varepsilon > 0$ was arbitrary, it follows that the latter becomes arbitrarily small as $N_j$ grows. 

Whence the quotient in the limit in the statement of item \ref{ma:oscillationcompcont} is very close, for large $j$, to 
\[\frac{\sum_{i=0}^{N_j}\varepsilon_i}{\sum_{i=0}^{N_j}\varepsilon_i\psi(x_i)}\int_{\R^n\times\R^n}v\psi(x)d\meas{\gamma}{[0,t_{N_j+1}]}(x,v).\]
We now prove that the above quantity converges to 0  as $j\to+\infty$.
 Taking a subsequence so that $\meas{\gamma}{[0,t_{N_j+1}]}$ converges to some probability measure $\mu$, the quotient on the left converges to 
 \[\left.1\middle/\int \psi(x)\,d\pi_*\mu(x),\right.\] 
 and our hypothesis on the subsequence $\{N_j\}_j$ thus guarantees that $\int \psi(x)\,d\pi_*\mu(x)>0$. %

Thus, it suffices to show that, for every limit point $\mu$ of the sequence $\{\meas{\gamma}{[0,t_{N+1}]}\}_N$ satisfying $$\displaystyle \int \psi(x)\,d\pi_*\mu(x)>0,$$ we have
\begin{equation}\label{eq:wantintegral}
\int_{\R^n\times\R^n} v\psi(x)\,d\mu(x,v)=\int_{\R^n} \bar v_x\psi(x)\,d(\pi_*\mu)(x)=0,
\end{equation}
where $\bar v_x$ is the centroid field of $\mu$. By Lemma \ref{lem:longintervals} we know that $\mu$ is closed so that Theorem \ref{thm:vxvanishes} applies which gives  $\bar v_x=0$ for $\pi_*\mu$-almost every $x$. This immediately implies \eqref{eq:wantintegral}.
\subsubsection{Item \ref{ma:criticality}}
\label{sec:proofmain2}

To prove item \ref{ma:criticality}, consider the interpolation curve constructed in Section \ref{sec:interpolatingcurve}. 
Consider a limit point $\mu$ of the sequence $\{\meas{\gamma}{[0,N]}\}_N$. By Lemma \ref{lem:longintervals}, $\mu$ is closed.
By Theorem \ref{thm:vxvanishes}, the centroid field $\bar v_x$ of $\mu$ vanishes for $\pi_*\mu$-almost every $x$, so from Lemma \ref{lem:centroidclarke} we know that $0=\bar v_x\in\partial^cf(x)$, and hence $x\in \crit f$ for a dense subset of $\pi(\supp\mu)$. Since this is true for all limit points $\mu$, by Lemma \ref{lem:essaccsupp} we know that it is true throughout $\essacc\{x_i\}_i$.

\subsection{Proof of Theorem \ref{thm:addendum}}
\label{sec:proofaddendum}

 \subsubsection{The function is constant on the accumulation set}
 \label{sec:accset}

\begin{lem}\label{lem:acc}
 Assume that the path differentiable function $f\colon\R^n\to\R$ is constant on each connected component of its critical set, and let $\{x_i\}_i$ be a bounded sequence produced by the subgradient method. Then $f$ is constant on the set $\operatorname{acc}\{x_i\}_i$ of limit points of $\{x_i\}_i$.
\end{lem}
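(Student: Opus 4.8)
The plan is to combine the connectedness of $\acc\{x_i\}_i$ with the convergence of values along subsequences that are already implicit in the machinery built up in Section \ref{sec:proofs}. First I would recall the standard fact, noted in Section \ref{sec:algorithmframework}, that since $\{x_i\}_i$ is bounded and $\|x_{i+1}-x_i\|=\varepsilon_i\|v_i\|\to 0$, the accumulation set $\acc\{x_i\}_i$ is compact and connected. Next, I claim that $f$ is locally constant on $\acc\{x_i\}_i$ in the following sense: for every $x\in\acc\{x_i\}_i$ there is a neighborhood $U$ of $x$ such that $f$ takes a single value on $U\cap\acc\{x_i\}_i$. Granting this claim, the value $c(x)=f$ restricted near $x$ defines a locally constant, hence continuous, function on the connected set $\acc\{x_i\}_i$; a locally constant function on a connected space is globally constant, which finishes the proof.

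To establish the local claim I would argue by contradiction. Suppose $x,y\in\acc\{x_i\}_i$ are arbitrarily close but $f(x)\neq f(y)$; after relabeling assume $f(x)<f(y)$, and fix $\delta>0$ with $\|x-y\|>2\delta$ will \emph{not} be available since they are close, so instead I work with nested small balls. The key input is item \ref{ma:criticality} of Theorem \ref{thm:main} together with the criticality argument already run in Section \ref{sec:proofmain2}: any limit point of the averaged measures $\meas{\gamma}{[0,N]}$ (or of the measures $\meas{\gamma}{B_N}$ over long intervals as in Lemma \ref{lem:longintervals}) is a closed measure whose centroid field vanishes $\pi_*\mu$-a.e.\ by Theorem \ref{thm:vxvanishes}, hence is supported in $\crit f$. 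More directly, I would invoke Lemma \ref{lem:generaltechnique}: if the sequence passed repeatedly and in bounded time from a ball around $x$ to a ball around $y$, then some uniform limit $\bar\gamma\colon[a,b]\to\R^n$ of the interpolant restricted to those intervals would satisfy $-\int_a^b\|\bar\gamma'\|^2\,dt=f(\bar\gamma(b))-f(\bar\gamma(a))$, forcing $f(y)\le f(x)$ along one traversal and $f(x)\le f(y)$ along a return traversal, hence $f(x)=f(y)$ on the whole orbit segment — contradiction unless the traversal times blow up. But if the traversal times blow up while $x,y$ stay close, then between consecutive visits the trajectory must spend unbounded time, and passing to the limiting closed measure on those long intervals (Lemma \ref{lem:longintervals}) we obtain a closed measure $\mu$ with $\pi(\supp\mu)\subset\crit f$ by the argument of Section \ref{sec:proofmain2}, while $\pi(\supp\mu)$ is a connected subset of $\acc\{x_i\}_i$ joining (a point near) $x$ to (a point near) $y$. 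Since $f$ is assumed constant on connected components of $\crit f$, $f$ is constant on $\pi(\supp\mu)$, and since this set accumulates at both $x$ and $y$ as $\delta\searrow 0$, continuity of $f$ gives $f(x)=f(y)$.

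I anticipate the main obstacle to be a careful bookkeeping issue rather than a conceptual one: one must extract, from an arbitrary pair of nearby limit points, the right family of time intervals along which the interpolant's restrictions converge to a limiting curve or a limiting closed measure, and ensure that this limiting object's spatial projection genuinely connects $x$ to $y$ (or to points converging to them). Handling the dichotomy — bounded traversal times, where Lemma \ref{lem:generaltechnique} applies directly, versus unbounded traversal times, where one instead builds a closed measure via Lemma \ref{lem:longintervals} and uses the criticality-plus-constancy hypothesis — is where the care is needed. The continuity of $f$ (it is locally Lipschitz) is what lets one pass from ``constant on a set accumulating at $x$ and $y$'' to ``$f(x)=f(y)$,'' so that step is routine once the connecting set is produced.
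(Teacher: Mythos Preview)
Your dichotomy has a genuine gap in both branches. In the bounded-traversal case, Lemma \ref{lem:generaltechnique} produces a limit curve $\bar\gamma$ whose endpoints lie in $\overline{B_\delta(x)}\cap\acc\{x_i\}_i$ and $\overline{B_\delta(y)}\cap\acc\{x_i\}_i$, not at $x$ and $y$ themselves; so you only obtain $f(q)\leq f(p)$ for some $p$ near $x$ and $q$ near $y$, and the return traversal gives a similar inequality for a \emph{different} pair $p',q'$. These do not combine to $f(x)=f(y)$, and you cannot send $\delta\to 0$ without losing control of which branch of the dichotomy you are in. The more serious problem is in the unbounded branch: there is no reason for the projected support $\pi(\supp\mu)$ of a weak* limit of $\{\meas{\gamma}{\cup_{i\le N} I_i}\}_N$ to be connected, nor to contain points near both $x$ and $y$. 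As $|I_j|\to\infty$ the mass may concentrate far from both balls, and even if $\pi(\supp\mu)\subset\crit f$ (which does follow from Theorem \ref{thm:vxvanishes} and Lemma \ref{lem:centroidclarke}), nothing forces it to sit in a single connected component of $\crit f$, so the constancy hypothesis gives you nothing.

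The paper avoids both difficulties by working in the \emph{value} space rather than the spatial one. It first argues that $f(K\cap\crit f)$ is nowhere dense, so between any two values $J_1<J_2$ attained on $\acc\{x_i\}_i$ one can pick $c_1<c_2$ with \emph{no critical values} in $(c_1,c_2)$. The traversal intervals are then defined as crossings from the sublevel set $\{f<c_1\}$ to the superlevel set $\{f>c_2\}$; this makes the integral of $\partial^c f$ along each crossing equal to the exact quantity $c_2-c_1$, not an approximate one. In the unbounded-length case the limiting closed measure is supported in $K\cap\crit f\cap f^{-1}[c_1,c_2]$, which is empty by the choice of $c_1,c_2$---no connectedness argument is needed. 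The missing idea in your plan is precisely this passage to level sets and the use of the scarcity of critical values, which replaces the unsupported topological claim about $\pi(\supp\mu)$.
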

\begin{proof}
 Assume instead that $f$ takes two values $J_1<J_2$ within $\operatorname{acc}\{x_i\}_i$. 
 
 Let $K$ be a compact set that contains the closure $\overline{\{x_i\}_i}$ in its interior. %
 Since $f$ is constant on the connected components of $\operatorname{crit}f$ and since $f$ is Lipschitz, the set $f(K\cap\operatorname{crit}f)$ has measure zero because, given $\varepsilon>0$, the connected components $C_i$ of $K\cap\operatorname{crit}f$ of positive measure $|C_i|>0$ ---of which there are only countably many--- can be covered with open sets \[f^{-1}((f(C_i)-\varepsilon/2^{i+1},f(C_i)+\varepsilon/2^{i+1}))\] 
 with image under $f$ of length $\varepsilon/2^i$; the rest of $K\cap\operatorname{crit} J$ has measure zero, so it is mapped to another set of measure zero. The set $f(K\cap\operatorname{crit}f)$ is also compact, so we conclude that it is not dense on any open interval of $\R$. 
 
 We may thus assume, without loss of generality, that the values $J_1$ and $J_2$ are such that there are no critical values of $f|_K$ between them.
 
 Pick $c_1,c_2\in\R$ such that
 \[J_1<c_1<c_2<J_2.\]
 Let 
 \[W_1=f^{-1}(-\infty,c_1)\quad\textrm{and}\quad W_2=f^{-1}(c_2,+\infty).\]
 Clearly $W_j\cap\operatorname{acc}\{x_i\}_i\neq\emptyset$ because the value $J_j$ is attained in $\operatorname{acc}\{x_i\}_i$, $j=1,2$. 
 
 Consider the curve $\gamma\colon\R_{\geq0}\to K\subset\R^n$ interpolating the sequence $\{x_i\}_i$. Let $A$ be the set of intervals
 \begin{equation*}
  A=\{[t_1,t_2]\subset\R:t_1<t_2,\gamma(t_1)\in\partial W_1,\gamma(t_2)\in \partial W_2,\\ 
  \textrm{$\gamma(t)\notin \overline{W_1\cup W_2}$ for $t\in (t_1,t_2)$}\}
 \end{equation*}
 Write $A=\{I_j\}_{j\in\N}$ for maximal, disjoint intervals $I_j$.
 Observe that if $I_j=[t_1^j,t_2^j]$, then we have, by the path-differentiability of $f$, that 
 \begin{equation}\label{eq:singleintervalint}\int_{t_1^j}^{t_2^j}\partial^c f(\gamma(t))\cdot\gamma'(t)\,dt=\int_{t_1^j}^{t_2^j}(f\circ\gamma)'(t)\,dt
 =f\circ\gamma(t_2^j)-f\circ\gamma(t_1^j)=c_2-c_1. 
 \end{equation}
 Let $\mu$ be a probability measure that is a limit point of the sequence $\{\meas{\gamma}{\cup_{i=0}^NI_i}\}_N$.

 Now, since $f$ is Lipschitz and $\overline W_1$ and $\overline W_2$ are compact, $|I_i|$ is bounded from below, let us say
 \[|I_i|>\alpha.\] 
 It is also bounded from above, because if not then we there is a subset $\{I_{i_j}\}_j$ of $A$ consisting of intervals  with length $|I_{i_j}|\to +\infty$, and we can apply Lemma \ref{lem:longintervals} and Theorem \ref{thm:vxvanishes} to get closed measures $\tilde\mu$ with 
 $\supp\pi_*\tilde\mu\subset\operatorname{crit}f$. Since the support of each such $\pi_*\tilde\mu$ is contained in $K\setminus (W_1\cup W_2)$, this would mean the existence of a critical value between $c_1$ and $c_2$, which contradicts our choice of $J_1$ and $J_2$. We conclude that the size of the intervals in $A$ is also bounded from above, say,
 \[|I_i|<\beta.\]
 By \eqref{eq:singleintervalint} we have
 \[\int \partial^cf\,d\meas{\gamma}{\cup_{i=0}^NI_i}=\frac{N(c_2-c_1)}{\sum_{i=0}^N|I_i|}\]
 and
 \[0<\frac{c_2-c_1}{\beta }\leq \frac{N(c_2-c_1)}{\sum_{i=0}^N|I_i|}\leq \frac{c_2-c_1}{\alpha}.\]
 Whence we also have at the limit
 \begin{equation}\label{eq:ftc}\int \partial^cf\,d\mu\geq \frac{c_2-c_1}{\beta}>0.\end{equation}
 
 Let $\bar v_x$ denote the centroid velocity vector field for $\mu$. By the construction of $\{x_i\}_i$, the vector $-\bar v_x$ is contained in the Clarke subdifferential of each point of $\supp\pi_*\mu$, and since the path-differentiability of $f$ allows us to choose any representative of this differential, we have, as in the proof of Theorem \ref{thm:vxvanishes},
 \[\int\partial^cf\,d\mu=-\int\bar v_x\cdot \bar v_x\,d\pi_*\mu\leq 0,\]
 which contradicts \eqref{eq:ftc}.
\end{proof}

\subsubsection{Proof of item \ref{ad:longseparation}}
\label{sec:pfaddendum5}

For $j\in\N$, let  $I_j=[t_{i_j},t_{i_{j+1}}]\subset\R$ be the interval closest to 0 with $t_j\leq t_{i_j}<t_{i_{j+1}}$, $\gamma(t_{i_j})\in B_\delta(x)$, and $\gamma(t_{i_{j+1}})\in B_\delta(y)$, so that $T_j=t_{i_{j+1}}-t_{i_j}$. Let $\gamma|_{I_j}$ be the restriction of the interpolant curve $\gamma$. Since the two balls $B_\delta(x)$ and $B_\delta(y)$ are at positive distance from each other, and since the velocity is bounded uniformly $\|\gamma'\|\leq \lip(f)$, we know that the numbers $T_j=|I_j|$ are uniformly bounded from below by a positive number.

Assume, looking for a contradiction, that there is a subsequence of $\{T_j\}_j$ that remains bounded from above. Apply Lemma \ref{lem:generaltechnique} to obtain a curve $\bar\gamma\colon[a,b]\to\R$ such that $\bar\gamma(a)\in B_{\delta}(x)\cap\acc\{x_i\}_i$ and $\bar\gamma(b)\in B_\delta(y)\cap\acc\{x_i\}_i$, while also satisfying
 \begin{equation}\label{eq:finalcalc}
 -\int_a^b \|\bar\gamma'(t)\|^2dt =f\circ\bar\gamma(b)-f\circ\bar\gamma(a)=0
\end{equation}
by Lemma \ref{lem:acc}. This contradicts the fact that the distance between the balls $B_\delta(x)$ and $B_\delta(y)$ ---and hence also the arc length of $\bar\gamma$--- is positive.

\subsubsection{Proof of item \ref{ad:longintervals}}
\label{sec:pfaddendum4}

 Aiming at a contradiction, we assume instead that there is some $x\in \overline U\cap\operatorname{acc}\{x_i\}_i$ and some subsequence $\{i_j\}_j$ such that $\dist(x,\gamma(I_{i_j}))\to 0$ and $|I_{i_j}|\leq C$ for some $C>0$ and all $j\in\N$.

 We may thus apply Lemma \ref{lem:generaltechnique} to get a curve $\bar\gamma:[a,b]\to\R^n$ whose endpoints $\bar\gamma(a)$ and $\bar\gamma(b)$ are contained in $\acc\{x_i\}_i\setminus V$, and $\bar\gamma$ passes through $x\in \overline U$, so it has positive arc length. However, it is also a conclusion of Lemma \ref{lem:generaltechnique}, together with Lemma \ref{lem:acc}, that $\bar\gamma$ satisfies \eqref{eq:finalcalc}, which makes it impossible for its arc length to postive, so we have arrived at the contradiction we were looking for.

\subsubsection{Proof of item \ref{ad:oscillationcomp}}
\label{sec:pfaddendum1}

Let $U$, $V$, and $A$ be as in the statement of item \ref{ad:oscillationcomp}. 
Let $B=\cup_{i\in A}[t_i,t_{i+1})$. 
The statement of item \ref{ad:oscillationcomp} is equivalent to the statement that 
\begin{equation}\label{eq:weneed}\lim_{N\to+\infty}\int v\,d\meas{\gamma}{B\cap[0,N]}=0.\end{equation}
It follows from item \ref{ad:longintervals} that the maximal intervals $I_i\subset \R$ comprising $B=\bigcup_iI_i$ satisfy $|I_i|\to +\infty$. Hence, from Lemma \ref{lem:longintervals} we know that any limit point $\mu$ of the sequence $\{\meas{\gamma}{B\cap[0,N]}\}_N$ is closed, and from Theorem \ref{thm:vxvanishes} we know that the centroid field of $\mu$ vanishes $\pi_*\mu$-almost everywhere, which implies \eqref{eq:weneed}.

\subsubsection{Proof of item \ref{ad:criticality}}
\label{sec:pfaddendum2}

Let $x\in\acc\{x_i\}_i$.
For any neighborhood $U$ of $x$, we can take a slightly larger neighborhood $V$ and repeat the construction described in the proof of item \ref{ad:oscillationcomp} (Section \ref{sec:pfaddendum1}) of a closed measure $\mu$ whose support intersects $U$, and whose centroid field vanishes $\pi_*\mu$-almost everywhere. By Lemma \ref{lem:centroidclarke} we know that the centroid field is contained in the Clarke subdifferential. In sum, we have that in every neighborhood $U$ of $x$, there is a point $y\in U$ with $0\in \partial^c f(y)$, which implies that $0\in \partial^cf(x)$ because the graph of $\partial^cf$ is closed in $\R^n\times\R^n$.

\subsubsection{Proof of item \ref{ad:convergencef}}
\label{sec:pfaddendum3}

Recall that  $\acc\{x_i\}_i$ is connected. %
We know from item \ref{ad:criticality} that $\acc\{x_i\}_i\subseteq\crit f$.
So it is contained in a single connected component of $\crit f$. Hence $f$ must be constant on $\acc\{x_i\}_i$, and $\{f(x_i)\}_i$ converges.

\subsection{Proof of Corollary \ref{cor:perp}}
\label{sec:pfcorperp}

Let $\alpha$ be as in the statement of the corollary. By item \ref{ma:criticality} of Theorem \ref{thm:main} or of Theorem \ref{thm:addendum}, we know that $0\in\partial^cf(\alpha(0))$. Since $-v_i\in \partial^cf(x_i)$ and because the graph of $\partial^cf$ is closed in $\R^n\times\R^n$, we have that $-v\in\partial^cf(\alpha(0))$. The path differentiability of $f$ means that the choice of element of $\partial^cf(\alpha(0))$ is immaterial when we compute $(f\circ\alpha)'(0)$. %
So we have
\[w\cdot\alpha'(0)=(f\circ\alpha)'(0)=0\cdot\alpha'(0)=0.\]

\subsection{Proof of Corollary \ref{cor:whitney}}
\label{sec:pfcorwhitney}
 
  Let $K$ be a compact set that contains $\{x_i\}_i$ in its interior. By the Morse--Sard theorem applied independently on each stratum of the stratification of $f$, $f(\crit f)$ is a compact set of measure zero. Thus, it must be a totally-separated subset of $\R$. It follows that $f$ is constant on each connected component of $\crit f$. In other words, we are in the setting of Theorem \ref{thm:addendum}. From item \ref{ad:criticality} of Theorem \ref{thm:addendum} we know that $x\in\acc\{x_i\}_i\subset\crit f$, and the additional condition we have on $x$ tells us that $\partial^c f(x)\neq\{0\}$, so $x$ must be contained in a stratum of smaller dimension. The last statement of the corollary follows from Corollary \ref{cor:perp}.

\appendix

\section{Proof of Theorem \ref{thm:superposition}}
\label{sec:pfsuperposition}

\noindent We follow the exposition of \cite{bangert1999minimal}.

We remark at the outset that if $\nu$ is a probability measure on $\lipschitz$ that is $\tau_t$-invariant for all $t\in\R$, then in view of \eqref{eq:timetranslationinv}, for $\varphi\in C^\infty(\R^n)$,
\begin{multline*}
    \int_{\lipschitz}\nabla \varphi(\gamma(0))\cdot\gamma'(0)\,d\nu(\gamma)=\int_0^1\int_{\lipschitz} \nabla \varphi(\gamma(t))\cdot\gamma'(t)\,d\nu(\gamma)\,dt\\
      =\int_{ \lipschitz} \int_0^1(\varphi\circ\gamma)'(t)\,dt\,d\nu(\gamma)
      =\int_{ \lipschitz} (\varphi\circ\gamma(1)-\varphi\circ\gamma(0))\,d\nu(\gamma)=0.
\end{multline*}
Thus the probability measure induced by $\nu$ on $\R^n\times\R^n$ by pushing forward through the map $\gamma\mapsto (\gamma(0),\gamma'(0))$ (as in the right-hand side of \eqref{eq:superposition}) is automatically closed.

\smallskip
\noindent \textbf{Smooth case.}
Assume first that there is a $C^\infty$ compactly-supported vector field $X\colon U\to\R^n$ such that $\mu$ is given by $\delta_{(x,X(x))}\otimes\rho(x)\lebesgue_{U}(x)$, with some smooth probability density $\rho\colon\R^n\to\R_{\geq0}$, 
\[\int_{\R^n\times\R^n}\phi\,d\mu=\int_{\R^n}
\phi(x,X(x))\,\rho(x)\,dx,\]
for measurable $\phi\colon\R^n\to\R$.
For $\mu$, the centroid field is $\bar v_x=X(x)$. Without loss of generality we may assume that $X$ vanishes in a neighborhood of the boundary $\partial U$.

Denote by $\Phi\colon U\times\R\to U$ the flow of $X$, so that, for all $t\in\R$ and writing $\Phi_t(x)=\Phi(x,t)$,
\[\Phi_0(x)=x\quad\textrm{and}\quad \frac{d}{dt}\Phi_t(x)=X(\Phi_t(x)).\]
Since $\supp X$ is compact, by the Picard--Lindel\"of theorem we know that $\Phi_t(x)$ is defined for all $t\in\R$ for all $x\in U$.
The measure $\mu$ is $\Phi_t$-invariant because, integrating by parts, we get that for all $\varphi\in C^\infty(\R^n)$,
\begin{equation*}
0=\int_{\R^n\times\R^n} \nabla \varphi(x)\cdot v\,d\mu(x,v)=\int_{\R^n}\nabla\varphi(x)\cdot X(x)\,\rho(x)\,dx=-\int\varphi(x)\divergence(\rho X)(x)\,dx,
\end{equation*}
so $\rho X$ is a divergence-free vector field; thus, the flow $\Phi_t$ of $X$ preserves $\rho$.

For $L>0$, let $\lipschitz_L\subset\lipschitz$ be the set of Lipschitz curves $\gamma$ with Lipschitz constant at most $L$. Then 
\[\Gamma=\{\gamma\colon\R\to U|\textrm{ $\gamma(t)=\Phi_t(x)$ for some $x\in U$ and all $t\in\R$}\}\subset\lipschitz_{\|X\|_\infty}\]
and $\Gamma$ is a Borel subset of $\lipschitz$ because it can be expressed as the countable intersection of unions of the closed balls around a dense subset of $\Gamma$. 

Let $\operatorname{ev}\colon\lipschitz\to \R^n$ be the evaluation at 0, namely, $\operatorname{ev}(\gamma)=\gamma(0)$.
Denote by $\operatorname{ev}\!|_\Gamma^{-1}\colon U \to \Gamma$ the inverse of the one-to-one map that results from restricting $\operatorname{ev}$ to $\Gamma$.  For $x\in U$, $\operatorname{ev}|_{\Gamma}^{-1}(x)$ is exactly the curve $\beta\in\Gamma$ given by $\beta(t)=\Phi_t(x)$ that satisfies, in particular, $\operatorname{ev}(\beta)=\beta(0)=x$ and $\beta'(0)=X(x)$.

Let $\nu$ be the pushforward 
\[\nu=(\operatorname{ev}\!|_\Gamma^{-1})_*(\rho(x)\lebesgue_{\R^n}(x)) \]
so that, for measurable $\phi\colon\R^n\times\R^n\to\R$,
\begin{equation*}
\int_{\lipschitz}\phi(\gamma(0),\gamma'(0))\,d\nu =
\int_{\R^n\times\R^n}\phi(x,X(x))\rho(x)\,dx.
\end{equation*}
The measure $\nu$ is supported in $\Gamma$ and, being the pushforward of a probability, it is a probability as well. 

\smallskip
\noindent
\textbf{General case.}
Let $\mu$ be an arbitrary closed probability measure on $\R^n$. Let $L>0$ be such that if $(x,v)\in\supp\mu$ then $\|v\|\leq L$. Let $U$ be a bounded, open subset of $\R^n$ that contains $\supp\mu$ and satisfies $\dist(\supp\mu,\partial U)\geq 1$.
Let $\psi\colon\R^{2n}\to\R$ be a mollifier, that is, a $C^\infty$, compactly supported, radially
symmetric $\psi(x)=\psi(\|x\|)$, nonnegative function with $\int_{\R^n}\psi=1$ and $\supp\psi\subseteq B_1(0)\subset\R^n$, and let $\psi_r(x)=r^{-2n}\psi(x/r)$ for $0<r<1$. 
The probability measure $\psi_r*\mu$ is smooth and compactly supported; in fact,
\[\supp\psi_r*\mu\subset U\times\{v\in\R^n:\|v\|\leq L+r\}.\]
Denote $\bar v^\eta$ the centroid field of the measure $\eta$. Then the centroid field and the projected densities of the convolution, $\bar v^{\psi_r*\mu}$ and $\pi_*(\psi_r*\mu)$, are smooth and converge to $\bar v^\mu$ and $\pi_*\mu$, respectively, as $r\searrow 0$.

Analogously to the definition of $\Gamma$ in the smooth case, let $\Gamma_r$ be the subset of $\lipschitz_{L+r}$ that consists of all flow lines of $\bar v^{\psi_r*\mu}$ that are defined on all of $\R$, and let
\[\nu_r=(\operatorname{ev}|_{\Gamma_r}^{-1})_*( \pi_*(\psi_r*\mu)(x)\lebesgue_{\R^n}(x))\]
so that, for measurable $\phi\colon\R^n\times\R^n\to\R$,
\[\int_{\lip}\phi(\gamma(0),\gamma'(0))\,d\nu_r(\gamma)=\int_{\R^n\times\R^n}\phi(x,\overline v_x^{\psi_r*\mu})\,d(\pi_*(\psi_r*\mu))(x).\]
The probability measure $\nu_r$ is supported in the set $\lipschitz_{L+r}$. 

The set $\lipschitz_{L+1}$, which contains $\lipschitz_{L+r}$ for $0\leq r<1$, is sequentially compact. Indeed, if we have a family $\{\gamma_i\}_{i\in I}\subset \lipschitz_{L+1}$, then it is equibounded (as the image of each curve is contained in the bounded set $U$) and equicontinuous (because all its members have Lipschitz constant at most $L+1$), so by the Arzel\`a--Ascoli theorem we can extract a subsequence $\{\gamma_i^1\}_{i\in \N}$ that converges in the interval $[-1,1]$. We then produce, by induction, a sequence of subsequences: assuming we already extracted a subsequence $\{\gamma^j_i\}_i$ that converges in $[-j,j]$, the Arzel\`a--Ascoli theorem tells us that there is a further subsequence $\{\gamma^{j+1}_i\}_i\subseteq \{\gamma^j_i\}_i$ of curves that converge in the interval $[-j-1,j+1]$. We then pick the diagonal sequence $\{\gamma^i_i\}_i$, which converges throughout $\R$ to a curve in $\lipschitz_{L+1}$.

Since it is also metrizable with $\dist(\gamma_1,\gamma_2)=\|\gamma_1-\gamma_2\|_{\infty}$, $\lipschitz_{L+1}$ is also compact. Prokhorov's theorem \cite{prokhorov} implies that there is a weakly convergent sequence $\{\nu_{r_i}\}_i\subset\lipschitz_{L+1}$ with $r_i\searrow0$. It is then a routine procedure to check that the limit probability measure $\nu=\lim_i\nu_{r_i}$ satisfies \eqref{eq:superposition}.

\medskip

\noindent
{\bf Acknowledgements.} The authors acknowledge the support of ANR-3IA Artificial and Natural Intelligence Toulouse Institute. JB and EP also thank Air Force Office of Scientific Research, Air Force Material Command, USAF, under grant numbers FA9550-19-1-7026, FA9550-18-1-0226, and ANR MasDol. JB acknowledges the support of ANR Chess, grant ANR-17-EURE-0010 and ANR OMS.

 \bibliography{bib}{}
 \bibliographystyle{plain}

\end{document}